\newcommand{\eps}{\varepsilon}
\newcommand{\C}{\mathbb{C}}
\newcommand{\N}{\mathbb{N}}
\newcommand{\R}{\mathbb{R}}
\renewcommand{\S}{\mathbb{S}}
\newcommand{\T}{\mathbb{T}}
\newcommand{\Z}{\mathbb{Z}}
\newcommand{\boF}{\mathcal{F}}
\newcommand{\boH}{\mathcal{H}}
\newcommand{\boM}{\mathcal{M}}
\newcommand{\boR}{\mathcal{R}}
\newcommand{\boS}{\mathcal{S}}
\newcommand{\boT}{\mathcal{T}}
\newcommand{\ka}{\ensuremath{\kappa}}
\newcommand{\la}{\ensuremath{\lambda}}
\newcommand{\te}{\ensuremath{\theta}}
\newcommand{\al}{\ensuremath{\alpha}}
\newcommand{\be}{\ensuremath{\beta}}
\newcommand{\gam}{\ensuremath{\gamma}}
\newcommand{\Gam}{\ensuremath{\Gamma}}
\newcommand{\si}{\ensuremath{\sigma}}
\newcommand{\Om}{\ensuremath{\Omega}}
\renewcommand{\Re}{\mathop{{\rm Re}}\nolimits}
\newcommand{\loc}{\mathrm{loc}}
\newtheorem*{claim*}{Claim}
\newtheorem*{cor*}{Corollary}
\newtheorem{lem}{Lemma}
\newtheorem{prop}{Proposition}
\newtheorem{thm}{Theorem}
\theoremstyle{definition}
\newtheorem{remark}{Remark}
\theoremstyle{remark}
\newcommand{\nor}[2]{\left\| {#1} \right\|_{#2}}		
\newcommand{\ovl}[1]{\overline{#1}}					
\newcommand{\inp}[2]{\langle {#1} , {#2} \rangle }	
\newcommand{\Del}{{\Delta}}							
\newcommand{\del}{{\delta}}								
\newcommand{\rd}{{\partial}}								
\newcommand{\nab}{{\nabla}}							
\newcommand{\wto}{\rightharpoonup}
\newcommand{\bh}{{\mathbf{h}}}
\newcommand{\bn}{{\mathbf{n}}}
\newcommand{\bom}{{\mathbf{m}}}
\newcommand{\bpsi}{{\bm{\psi}}}
\newcommand{\bJ}{{\mathbf{J}}}
\DeclareMathOperator{\BF}{{\rm BF}}
\begin{document}

\title[Time decay estimates for the Landau--Lifshitz--Gilbert equation]{Time decay estimates for localized perturbations around a helical state 
for the Landau--Lifshitz--Gilbert equation}
\author{Ikkei Shimizu}
\address[I.~Shimizu]{Department of Mathematics, Graduate School of Science, Kyoto University, Kyoto, Japan}
\email{shimizu.ikkei.8s@kyoto-u.ac.jp}
\keywords{Landau--Lifshitz--Gilbert equation, Dzyaloshinskii--Moriya interaction, helical state, stability, Bloch-wave decomposition}
\subjclass[2020]{35Q60, 
82D40, 35B40, 35B35}

\begin{abstract}
We study the dynamics of the Landau--Lifshitz--Gilbert equation with the Dzyaloshinskii--Moriya interaction. 
The equation admits a family of exact stationary solutions, referred to as helical states, which are periodic in one spatial variable and constant in the others. 
We investigate the dynamical stability of a helical state with respect to perturbations belonging to suitable Lebesgue and Sobolev spaces. 
Under a smallness assumption on the initial perturbation, we prove global existence and time decay estimates for solutions, demonstrating that the above helical state is stable. 
The analysis of the relevant linear operator is carried out via the Bloch--Fourier-wave decomposition, 
where the eigenvalue problem for the reduced operator is characterized by certain Mathieu equations. 
\end{abstract}

\maketitle

%
%
%
%
%
%
%
%
%
%
\section{Introduction}\label{S1}
\subsection{Introduction and main result}
We consider the Landau--Lifshitz--Gilbert (LLG) equation: 
\begin{equation}\label{E1.1}
\rd_t \bn = \bn \times E'[\bn] + \al \bn \times (\bn\times E'[\bn]),
\end{equation}
associated with the Landau--Lifshitz energy with Dzyaloshinskii--Moriya interaction (DMI)
\begin{equation}\label{E1.2}
E [\bn] = \frac 12 \int_{\R^d} |\nab \bn|^2 dx + \frac 12 \int_{\R^d} \bn \cdot \nab \times \bn\, dx.
\end{equation}
Here, $d=1,2,3$ is the dimension of the space, and 
$\al>0$ is a constant. 
$\bn = \bn(t,x) = {}^t (n_1(t,x),n_2(t,x), n_3(t,x))\colon [0,\infty)\times \R^d \to \R^3$ is the unknown function of \eqref{E1.1} with constraint 
\begin{equation}\label{E1.3}
|\bn|\equiv 1.
\end{equation}
In other word, the target of the map $\bn$ is the unit sphere $\S^2$ in $\R^3$. 
Also, the curl in \eqref{E1.2} is defined by 
\[
\nab \times \bn
=
\begin{pmatrix}
\rd_1 \\ \rd_2 \\ \rd_3
\end{pmatrix}
\times 
\begin{pmatrix}
n_1 \\ n_2 \\ n_3
\end{pmatrix}
,
\]
where we interpret 
$\rd_2=\rd_3=0$ for $d=1$, and 
$\rd_3=0$ for $d=2$. 
Under \eqref{E1.3}, the variational derivative of $E$ is given by
\[
E'[\bn] = -\Del \bn + \nab \times \bn 
- \left(|\nab \bn|^2 + (\nab \times \bn)\cdot \bn\right) \bn.
\]
Accordingly, \eqref{E1.1} can be explicitly written as
\begin{equation}\label{E1.4}
\rd_t \bn = \bn \times \left(-\Del \bn + \nab \times \bn\right)
+  \al \left(\Del\bn - \nab \times \bn +  
\left(|\nab \bn|^2 + (\nab \times \bn)\cdot \bn\right) \bn\right).
\end{equation}
\eqref{E1.1} is a mathematical 
model to describe the dynamics of 
magnetization vector $\bn$ of ferromagnets with energy-damping effect. 
See \cite{HubertSchafer} for a detailed background; see also 
\cite{MR2771669}*{Section 7} 
for an introductory exposition of the mathematical model. 
The DMI, corresponding to the second term in \eqref{E1.2}, 
arises in certain ferromagnets and is known to 
stabilize 
spatially inhomogeneous configurations which have geometric structures  
\cites{BogYab89, BogHub94, BOGDANOV1999182, NagTok2013}. 
In the present case, \eqref{E1.1} admits a family of explicit stationary solutions
\begin{equation}\label{E1.425}
\bh^\ka (x) =
\begin{pmatrix}
0 \\ \cos \ka x_1 \\ \sin \ka x_1
\end{pmatrix}
,\qquad \ka>0
\end{equation}
which we call the helical states. 
In fact, $\bh^\ka$ solves
\begin{equation}\label{E1.45}
-\Del \bh^\ka + \nab \times \bh^\ka = (\ka^2-\ka) \bh^\ka,
\end{equation}
and hence satisfies $E'[\bh^\ka]=0$ in the classical sense. 
See \cite{Ross2021} for the derivation of these solutions. 
Among this family of solutions, 
$\bh^{1/2}$ can be viewed as a ground state of the energy at formal level. 
In fact, 
the energy density can be rewritten as
\begin{equation}\label{E1.5}
\begin{aligned}
&\frac 12 |\nab \bn|^2 + \frac 12 \bn \cdot \nab \times \bn \\
& =  \frac 12 \left(
\left|\nab \times \bn + \frac 12 \bn\right|^2 
+(\nab \cdot \bn)^2
\right) 
-\frac 18 +\frac 12 \nab\cdot \left( (\bn\cdot \nab)\bn - \bn (\nab \cdot \bn) \right), 
\end{aligned}
\end{equation}
which stems from a well-known formula in the theory of the Oseen--Frank energy (see Section 2.1 in \cite{Stewart} for instance). 
It is known that $\bh^{1/2}$ is the only map, up to rotation, which \textit{minimizes} the bulk terms in \eqref{E1.5} in the following sense:
\begin{align}\label{E1.55}
\nab \times \bh^{1/2} + \frac 12 \bh^{1/2} =0,&&
\nab \cdot \bh^{1/2} =0,&&
 |\bh^{1/2}|\equiv 1.
\end{align}
We refer the reader to Lemmas 1 and 7 in \cite{MR4170329} for the detailed discussion on this fact. 
Moreover, as pointed out in \cite{Ross2021}, 
the minimality at $\ka=\frac 12$ is also evidenced 
by computing energy density of $\bh^{\ka}$:
\[
\frac 12 |\nab \bh^{\ka}|^2 + \frac 12 \bh^{\ka} \cdot \nab \times \bh^{\ka} = 
\frac 12\ka(\ka-1) = \frac 12 \left(\ka-\frac 12\right)^2 -\frac 18.
\]
However, 
these arguments remain at a formal level, 
as the total energy of $\bh^{\ka}$ is not strictly well-defined.  
Therefore, from the viewpoint of PDE theory, 
it is worth investigating rigorously the stability of $\bh^{\ka}$ 
by analyzing the associated dynamical system, such as \eqref{E1.1}.\par
In the present paper, 
we investigate the dynamical stability of $\bh^1$ with respect to \eqref{E1.1}.  
Specifically, 
we study the dynamics of \eqref{E1.1} when the solution is perturbed \textit{locally} from $\bh^1$. 
More precisely, we consider the solution of \eqref{E1.1} of the form
\begin{equation}\label{E1.6}
\bn(t,x) = \bh^1 (x) + \bom (t,x)
\end{equation}
with $\bom(t,\cdot)$ belonging to the inhomogeneous Sobolev spaces $H^s(\R^d)$. 
Here, the term \textit{local} refers to the condition $\bom \in L^2(\R^d)$. 
The purpose of the present paper is to examine the long-time behavior of the solutions to \eqref{E1.1} of the form \eqref{E1.6}. 
In the main theorem, we will claim that 
some norms of $\bom$ decays as $t\to\infty$ under smallness assumptions of initial data for $\bom$. 
This demonstrates the dynamical stability of $\bh^1$ under localize perturbations, 
despite the fact that $\bh^{1}$ is not a ground state in the above meaning.
\par 
In what follows, we simply denote $\bh^1$ by $\bh$. 
To begin with, we substitute \eqref{E1.6} into \eqref{E1.4}, deriving the equation of $\bom$ as
\begin{equation}\label{E1.7}
\rd_t \bom = 
(\al - \bh\times \cdot) (\Del\bom - \nab \times \bom) 
 + \bom\times (-\Del \bom + \nab \times \bom) + \al \Gam (\bom)
\end{equation}
where
\begin{align}\label{E1.75}
\Gam (\bom) = &
\left(
2(\nab \bh : \nab \bom)  + \nab \times \bh \cdot \bom  
+ \nab \times \bom \cdot \bh 
+|\nab \bom|^2  + \nab \times \bom \cdot \bom
\right) (\bh + \bom). 
\end{align}
We also set the initial data $\bom(0,x)=\bom_0 (x)$. 
Note that \eqref{E1.7} is quasi-linear, as it includes the nonlinearity $\bom\times \Del \bom$. 
We first claim that \eqref{E1.7} is locally well-posed in $H^s(\R^d)$, 
the proof of which is given in Appendix \ref{SA}.
\begin{prop}[Local well-posedness]\label{P1} 
Let $s$ be an integer satisfying $s\ge 1$ for $d=1$, and $s\ge 2$ for $d=2,3$. For $\bom_0\in H^s(\R^d\colon \R^3)$, 
\eqref{E1.7} has a unique solution 
\[\bom \in C([0,T_{\max})\colon H^s(\R^d\colon \R^3))\cap\allowbreak 
L^2_{\loc}(0,T_{\max}\colon H^{s+1}(\R^d\colon \R^3))\]
with $\bom(0,x)=\bom_0$, 
where 
$T_{\max} = T_{\max} (\bom_0)$ is the maximal existence time in this class. 
Moreover, 
the solution map $\bom_0 \mapsto \bom$ is continuous in the topology $H^{s-1}$ to $C_tH^{s-1}$. 
Furthermore, if $T_{\max} (\bom_0)<\infty$, then we have
\begin{equation}\label{E1.8}
\lim_{t\to T_{\max}} \nor{\bom(t)}{H^s(\R^d)} = \infty. 
\end{equation}
If $\bom_0$ satisfies $|\bh +\bom_0|=1$, then $|\bh + \bom(t)|=1$ holds for all $t\in [0,T_{\max})$. 
\end{prop}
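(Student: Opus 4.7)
The plan is to treat \eqref{E1.7} as a quasilinear strongly parabolic system. The leading operator $(\al - \bh\times\cdot)\Del$ has symbol $-(\al - \bh\times\cdot)|\xi|^2$; since $\bh\times\cdot$ is pointwise skew-symmetric as a $3\times 3$ matrix, the real part of this symbol equals $-\al|\xi|^2 I$, so the linearization is strongly parabolic and generates an analytic semigroup on $H^s(\R^d\colon\R^3)$. I would construct solutions by parabolic regularization --- convolving the quasilinear nonlinearity with a spatial mollifier $J_\eps$, or adding a small hyperviscous correction $-\eps\Del^2\bom$ --- solving the resulting semilinear problem by a Duhamel/contraction argument, and then passing to the limit $\eps\to 0$ via uniform $H^s$ bounds.

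The crucial a priori estimate is an $H^s$ energy estimate whose closing rests on the pointwise orthogonality $\bom\perp(\bom\times\Del\bom)$. Schematically, after differentiating $s$ times and pairing with $\rd^s\bom$, the quasilinear contribution splits as
\[
(\rd^s(\bom\times\Del\bom),\rd^s\bom)_{L^2} = (\bom\times\Del\rd^s\bom,\rd^s\bom)_{L^2} + ([\rd^s,\bom\times]\Del\bom,\rd^s\bom)_{L^2}.
\]
After one integration by parts, the first contribution reduces to $(\nab\bom\times\rd^s\bom,\nab\rd^s\bom)_{L^2}$ plus a piece $(\bom\times\nab\rd^s\bom,\nab\rd^s\bom)_{L^2}$ that vanishes pointwise from the identity $(\bom\times v)\cdot v\equiv 0$. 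The surviving terms, together with the commutator and the various contributions of $\Gam(\bom)$, are controlled by standard Moser/Kato--Ponce product estimates; this is where the embedding $H^s\hookrightarrow L^\infty$ imposes $s\ge 1$ for $d=1$ and $s\ge 2$ for $d=2,3$. Combined with the dissipation $\al\nor{\bom}{H^{s+1}}^2$ coming from the linear leading part, one arrives at
\[
\tfrac{d}{dt}\nor{\bom}{H^s}^2 + \al \nor{\bom}{H^{s+1}}^2 \le C\bigl(\nor{\bom}{H^s}\bigr)\nor{\bom}{H^s}^2,
\]
which via Gronwall yields uniform bounds on a short interval and, after compactness, a solution in $C([0,T]\colon H^s)\cap L^2(0,T\colon H^{s+1})$. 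The blow-up alternative \eqref{E1.8} then follows by the standard continuation argument.

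Uniqueness and $H^{s-1}$-continuity of the solution map $\bom_0\mapsto\bom$ come from the analogous energy estimate on the difference of two solutions, performed one derivative lower --- the usual loss of one derivative accompanying a quasilinear PDE. For the preservation of the constraint, set $\rho:=|\bh+\bom|^2-1$ and compute $\rd_t\rho$ directly from \eqref{E1.4}: using $\bn\cdot(\bn\times\cdot)\equiv 0$ and the identity $\bn\cdot\Del\bn = \tfrac12\Del|\bn|^2-|\nab\bn|^2$, one finds that $\rho$ satisfies a scalar linear parabolic equation of the form
\[
\rd_t\rho = \al\Del\rho + 2\al\bigl(|\nab\bn|^2 + \nab\times\bn\cdot\bn\bigr)\rho,\qquad \rho(0)\equiv 0,
\]
whose unique solution is $\rho\equiv 0$. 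The hard part throughout is the quasilinear term $\bom\times\Del\bom$: without the orthogonality $\bom\perp(\bom\times\Del\bom)$ and the cancellation $(\bom\times v)\cdot v\equiv 0$ that emerges after integration by parts, a direct $H^s$ energy estimate would not close, and essentially all of the Moser/commutator bookkeeping --- including the regularity thresholds in the statement --- is organized around this single structural cancellation.
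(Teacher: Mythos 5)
Your proposal follows essentially the same route as the paper's Appendix~\ref{SA}: a hyperviscous regularization $-\be\Del^2\bom$ solved by Duhamel/contraction, uniform $H^s$ energy bounds closed by the cancellation $\inp{(\bh+\bom)\times\rd_j\rd^\gam\bom}{\rd_j\rd^\gam\bom}_{L^2}=0$ after one integration by parts, a compactness passage to the limit, uniqueness and continuous dependence via the difference estimate at the $H^{s-1}$ level, and propagation of the constraint through a scalar linear parabolic equation for $|\bh+\bom|^2-1$ with zero initial data. The only differences are cosmetic (the paper writes the constraint equation for $e^{-2\al t}(|\bh+\bom|^2-1)$ rather than keeping the zeroth-order coefficient explicit, and your computation of that coefficient is correct), so the proposal is sound.
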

\begin{remark}\label{R1}
In Proposition \ref{P1}, 
no smallness condition is assumed on the initial data, despite the presence of the second-order derivative nonlinearity $\bom\times \Del \bom$. 
The key to avoid this assumption is to exploit the cancellation structure of $\bom\times \Del \bom$, which prevents the appearance of bad terms in the energy method. 
See the proof of Lemma \ref{L7.14} for the detailed argument. 
This technique is commonly used in the study of LLG, which can also be seen, for example, in \cites{MR1877231,MR2600676}.
\end{remark}
The main theorem of the present paper is the following:
\begin{thm}[Time decay estimates]\label{T1}
Let $s\ge 2$ be an integer. Then there exist $\eps_0>0$ and $C>0$, where 
$C$ is independent of $\al$, 
such that the following holds: 
If $\bom_0\in L^1(\R^d\colon \R^3)\cap H^s(\R^d\colon \R^3)$ satisfies $\nor{\bom_0}{L^1\cap H^s}<\eps_0$ and $|\bh+\bom_0|\equiv1$, then the solution $\bom$ to \eqref{E1.7} in Proposition \ref{P1} is global in time, and satisfies
\begin{equation}\label{E1.9}
\begin{aligned}
(1+\al t)^{\frac d4} \nor{\bom(t)}{H^s (\R^d)}
+(1+\al t)^{\frac d2} \nor{\bom(t)}{W^{\lfloor s-\frac{d+1}{2}\rfloor,\infty}(\R^d)} 
\le C 
\nor{\bom_0}{L^1(\R^d)\cap H^s(\R^d)} 
\end{aligned}
\end{equation}
for all $t\in [0,\infty)$.
\end{thm}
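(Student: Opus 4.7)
The plan is to split \eqref{E1.7} as $\rd_t\bom = \boL\bom + \boN(\bom)$, with
\begin{equation*}
\boL := (\al - \bh\times\cdot)(\Del - \nab\times)
\end{equation*}
linear and having coefficients $2\pi$-periodic in $x_1$ and constant in $(x_2,\ldots,x_d)$, while $\boN$ collects the remaining (quadratic and cubic) terms from \eqref{E1.7}--\eqref{E1.75}. Heat-type decay estimates for $e^{t\boL}$ will be established through a spectral analysis of $\boL$, and the nonlinear problem will be closed by combining Duhamel's formula at subcritical regularity with a direct differential energy inequality at the top Sobolev level.

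For the linear analysis, since $\bh$ is periodic in $x_1$ and translation-invariant in the remaining variables, I would apply the Bloch transform in $x_1$ together with the Fourier transform in $(x_2,\ldots,x_d)$, decomposing $\boL$ as a direct integral $\int^{\oplus}\boL_{k,\xi'}\,dk\,d\xi'$ indexed by $(k,\xi')\in [0,1)\times\R^{d-1}$. Each fibre $\boL_{k,\xi'}$ is a $3\times 3$ ordinary differential operator on the torus $\T$. Moving to the orthonormal frame $(\e_1,\bh'(x_1),\bh(x_1))$ and exploiting the tangency constraint coming from $|\bh+\bom|\equiv 1$, the eigenvalue problem for $\boL_{k,\xi'}$ reduces to a coupled pair of scalar second-order ODEs on $\T$, which after elimination can be put in the normal form of a Mathieu equation with $(k,\xi')$-dependent parameters, as announced in the abstract.

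At $(k,\xi')=(0,0)$ the fibre $\boL_{0,0}$ has a two-dimensional kernel on tangential perturbations, spanned (after Bloch normalisation) by $\e_1$ and the translation mode $\bh'(x_1)$. By Kato's analytic perturbation theory, implementable on the Mathieu reduction, the two critical eigenvalues admit analytic branches
\begin{equation*}
\lambda_j(k,\xi') = -\al\, A_j(k,\xi') + O(|(k,\xi')|^3), \qquad j=1,2,
\end{equation*}
with $A_j$ positive-definite quadratic forms in $(k,\xi')$, while the remaining spectrum stays at distance $\gtrsim \al$ from the imaginary axis uniformly in $(k,\xi')$. Inserting this dispersion relation into the spectral resolution and arguing as for the heat kernel (inverse Bloch--Fourier transform and a kernel estimate in $(k,\xi')$) then yields
\begin{equation*}
\nor{e^{t\boL}f}{L^p(\R^d)} \lesssim (1+\al t)^{-\frac{d}{2}\bigl(1-\frac{1}{p}\bigr)}\nor{f}{L^1(\R^d)\cap L^p(\R^d)}, \qquad 2\le p\le\infty,
\end{equation*}
together with its counterparts where a fixed number of derivatives are distributed on either side.

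With these linear bounds in hand, I would close the nonlinear problem by a bootstrap on
\begin{equation*}
M(T) := \sup_{0\le t\le T}\Bigl[(1+\al t)^{d/4}\nor{\bom(t)}{H^s} + (1+\al t)^{d/2}\nor{\bom(t)}{W^{\lfloor s-(d+1)/2\rfloor,\infty}}\Bigr].
\end{equation*}
The subcritical ($L^2$ and $L^\infty$) component of $M(T)$ is controlled via Duhamel's formula, the above linear decay estimate, and the fact that every term in $\boN$ is at least quadratic in $\bom$. The principal obstacle is the quasilinear piece $\bom\times\Del\bom$ inside $\boN$: it carries two derivatives and is not amenable to a naive Duhamel bound in $H^s$. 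To handle it, I would control the top-order norm $\nor{\bom(t)}{H^s}$ by a direct differential energy identity, exploiting the cancellation $\langle\bom\times\Del\bom,\Del\bom\rangle = 0$ already used in Proposition \ref{P1} (see Remark \ref{R1}) together with the pointwise constraint $|\bh+\bom|\equiv 1$. The $L^\infty$-decay component of $M(T)$ then supplies an integrable-in-$t$ coefficient in the resulting Gronwall inequality, which allows the bootstrap to close for all $t\ge 0$; combined with the blow-up criterion \eqref{E1.8}, this yields global existence and the decay \eqref{E1.9}.
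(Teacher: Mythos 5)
Your linear analysis is in the same spirit as the paper's (Bloch transform in $x_1$, Fourier in the remaining variables, Mathieu reduction on each fibre, quadratic behaviour of the critical eigenvalue at the origin, uniform gap for the rest, heat-type $L^1\to L^p$ kernel bounds), but two points need care. First, your decomposition $\rd_t\bom=\boL\bom+\boN(\bom)$ with $\boL=(\al-\bh\times\cdot)(\Del-\nab\times)$ is not the linearization: $\Gam(\bom)$ in \eqref{E1.75} contains terms \emph{linear} in $\bom$ (e.g.\ $2(\nab\bh:\nab\bom)\bh$, $(\nab\times\bom\cdot\bh)\bh$), so $\boN$ as you define it is not quadratic. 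These linear terms point in the normal direction $\bh$ and disappear only after projecting onto the tangent bundle; the paper performs this reduction globally via the moving frame, obtaining a scalar $\C$-valued equation $\rd_t u=(-\al+i)Au+o(u)$ in which the fibre operators $A_\xi$ are \emph{self-adjoint} with simple eigenvalues. Your plan of applying Kato perturbation theory directly to the non-normal $3\times3$ fibre with a degenerate eigenvalue at the origin is more delicate (Puiseux branches are possible in general); the analyticity and the clean form $(-\al\pm i)\la_0(\xi)$ of the two critical branches rely on the $\C$-linearity that holds precisely for $\ka=1$, which you should make explicit rather than assume.

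The genuine gap is in the closure at top order. Your plan controls $\nor{\bom(t)}{H^s}$ by a differential energy identity using the cancellation $\inp{\bom\times\Del\bom}{\Del\bom}=0$ and then Gronwall ``with an integrable-in-$t$ coefficient.'' This fails on two counts. (i) There is no Poincar\'e inequality on $\R^d$, so the damping term $-\al\nor{\nab\rd^\gam\bom}{L^2}^2$ in the energy identity yields at best \emph{boundedness} of $\nor{\bom(t)}{H^s}$, not the decay $(1+\al t)^{-d/4}$ demanded by your own bootstrap quantity $M(T)$; interpolating a bounded top norm against a decaying $L^2$ norm loses a power and cannot recover the full rate. (ii) The coefficient is not integrable in low dimensions: $\nor{\bom(t)}{L^\infty}\sim(1+\al t)^{-d/2}$ is non-integrable for $d=1$ and borderline for $d=2$, and $\nor{\nab\bom(t)}{L^\infty}\sim(1+\al t)^{-d/2-1/2}$ is borderline for $d=1$. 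The paper resolves both issues by the low/high Bloch-frequency splitting: on the low-frequency block $\nor{Q_Lv}{H^s}\lesssim\nor{Q_Lv}{L^2}$, so Duhamel from $L^1$ already gives the $H^s$ decay there, while the high-frequency block has a uniform spectral gap and decays like $e^{-c_0\al t}$ modulo Duhamel, with the smoothing norm $\al^{1/2}\bigl[\int_0^te^{-c_0\al(t-\tau)}\nor{u_H}{H^{s+1}}^2d\tau\bigr]^{1/2}$ built into $M_H$ to absorb the two-derivative quadratic terms. Moreover, in the $u$-variable the quasilinear term becomes $z\,\Del u+u\,\Del z$ with $z=O(|u|^2)$, i.e.\ effectively semilinear, so no top-order cancellation is needed in the decay argument at all (it is used only for local well-posedness). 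You would need to import this frequency splitting, or an equivalent mechanism, to make the top-order decay come out.
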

In particular, \eqref{E1.9} gives a pointwise decay estimate:
\[
\left|
\bn (t,x) - \bh (x)
\right| \le C (1+\al t)^{-\frac d2},\quad x\in\R^d,\quad t\ge 0
\]
with some constant $C>0$ independent of $\al$. 
\begin{remark}
By rescaling, we can generalize Theorem \ref{T1} into 
the case of any positive coefficient of DMI term: 
\[
E_K [\bn] = \frac 12 \int_{\R^d} |\nab \bn|^2 dx + \frac {K}2 \int_{\R^d} \bn \cdot \nab \times \bn\, dx,\qquad K>0.
\]
In this case, 
we can obtain the similar stability result for $\bh^{K}$. 
In fact, 
let $\bn$ be a solution to \eqref{E1.1} with the energy replaced by $E_K$. 
Then by change of variables $\tilde\bn (s,y)= \bn (t,x)$ with $s=K^2t$ and $y=Kx$, $\tilde\bn$ solves \eqref{E1.1} with the energy \eqref{E1.2}. 
\end{remark}
\begin{remark}
The specific choice $\ka=1$ in the present paper is due to the following technical reason. 
In our analysis, we reduce the equation of $\bom$ to a new $\C$-valued PDE 
by using a moving frame. Then the linearized operator turns out to be $\C$-linear only when $\ka=1$, which will be discussed in Remark \ref{Rxx5}. 
Our linear analysis essentially relies on this property.
\end{remark}
\subsection{Outline and the idea of the proof}
We now outline the proof of Theorem \ref{T1} and discuss the underlying ideas. 
We first introduce a moving frame of 
the tangent space of $\S^2$ at $\bh$, 
and transform $\bom$ into a new $\C$-valued unknown $u$. 
Then we derive the dynamical equation of $u$\ (see Section \ref{S2}), 
which turns out to 
be a semilinear partial differential equation of the form
\begin{align}\label{E1.12}
\rd_t u = (-\al+i) Au + o(u),
&&
A= 
\left\{
\begin{aligned}
& -\rd_{1}^2, \quad&\text{if } d=1,\\
&-\Del +i\cos x_1 \rd_2, \quad &\text{if } d=2,\\
&-\Del + i \cos x_1 \rd_2 + i\sin x_1 \rd_3, \quad &\text{if } d=3,
\end{aligned}
\right.
\end{align}
where $o(u)$ is the collection of the super-linear terms with respect to $u$ (see \eqref{E2.4} and \eqref{E2.5}). 
Since $u$ is quantitatively equivalent to $\bom$ (see \eqref{E2.275} and \eqref{E2.3}), 
we are able to reduce our problem to the examination for \eqref{E1.12}. \par
The analysis of \eqref{E1.12} consists of three parts. 
In the first step, we investigate the spectral properties of the linear operator $A$ (see Section \ref{S3}). 
The main focus is the case $d=2,3$, while the case $d=1$ is trivial. 
The characteristic feature of $A$ is that the coefficient depends 
periodically on $x_1$, while being constant in the other variables. 
Shedding light on this, we are led to apply the Bloch transform in $x_1$-variable, and the Fourier transform in the others. 
Then $A$ is decomposed into the direct sum of the operators $\{A_\xi\}$ on $L^2(\R/2\pi\Z)$ indexed by the wave number $\xi=(\xi_1,\xi')\in (\R/\Z)\times \R^{d-1}$. 
It will be turned out that the eigenvalue problem for $A_\xi$ is equivalent to that for the Mathieu operators. 
By the energy-band theory, combining with the estimate using the min-max principle, we identify that the least eigenvalue $\la_0(\xi)$ of $A_\xi$ with $\xi$ in the neighborhood of $0$ is responsible for the low-frequency part of $A_\xi$ (see Proposition \ref{P2}). 
Then by using the Lyapunov--Schmidt reduction, we derive the asymptotics of 
$\la_0(\xi)$ 
and its associated eigenfunction $\phi_0(\xi,\cdot)$ 
around $\xi=0$ (see Proposition \ref{P3}).\par 
In the second step, we consider the associated linear problem
\[
\rd_t u = (-\al +i) A u + F,\qquad F=F(t,x)\colon [0,\infty) \times \R^d \to \C. 
\]
Following the framework of \cites{MR2601067, MR2788923}
, 
we obtain the linear estimates separately for high- and low- frequency parts (see Section \ref{S4}). 
In detail, 
for the high-frequency part, 
we derive the energy estimate which is analogous to the standard one for the heat equation (see Proposition \ref{P:4}). 
On the other hand, for the low-frequency part, 
we prove $L^1\to L^2$ and $L^1\to L^\infty$-type estimates 
by using the before-mentioned asymptotics of $\la_0(\xi)$ and $\phi_0(\xi,\cdot)$ (see Proposition \ref{P7.20}). 
We note that the decay rates in these estimates are the same as those for the heat equation, 
thanks to the fact that the asymptotics of $\la_0(\xi)$ and $\phi_0(\xi,\cdot)$ on $\xi$ 
coincide for up to the first order derivatives.\par
In the final part, we conclude \eqref{E1.9} applying the linear estimates to \eqref{E1.12} (see Section \ref{S5}). 
Following the framework of \cites{MR3243362,MR4316931}, we estimate the solutions by considering \eqref{E1.12} as a system of low- and high- frequency parts. 
The resulting decay rate is the same as that of the linear estimate for the low-frequency part. 
\par
\subsection{Related works}
We briefly mention some related mathematical works. 
The variational problem for the Landau--Lifshitz energy with DMI 
has been extensively investigated, see  \cites{MR3269033,MR3639614,MR3853081,MR4091507,MR4179069,MR4630481,ibrahim2025global,MR4198719, MR4961803} and references cited therein. 
In contrast, the associated dynamical problem has recently begun to be examined. 
In \cite{MR4482034}, 
the author addressed the Landau-Lifshitz equation without the Gilbert damping term (Schr\"odinger map case),  establishing the local well-posedness in the Sobolev class. 
D\"oring and Melcher \cite{MR3639614} considered the LLG equation including spin-current terms and investigated the conformal limit of solutions with certain energy bound. 
Furthermore, \cite{MR4616656} examined the static domain-wall solutions to 1D problem and proved their stability with respect to LLG equation. 
We note that when the energy consists only of the Dirichlet energy, 
the corresponding dynamical equation has been intensively investigated in the context of the harmonic map heat flow or Schr\"odinger maps. 
See, for example, \cites{MR2725187,MR2800718,bejenaru2024nearsolitonevolution2equivariant,MR4932382} and references cited therein. 
On the other hand, the Bloch-wave analysis has been applied to 
the study of stability for spatially periodic equilibriums 
across various kind of nonlinear PDEs. 
We refer to the seminal pioneering works \cites{MR1395210,MR1434157,MR1617491,MR1657387,MR1482940} on this topic, 
as well as the study of periodic waves for water-wave equations \cites{MR2215999,MR2601067,MR2788923} and 
dispersive partial differential equations \cites{MR2791492,MR3656518,MR3749122}. 
Similar techniques used in these works also appear in the study of compressible Navier--Stokes equation in some periodic settings (see, for instance,  \cites{MR3348113,MR3606303,MR3714502}). 
We also refer to the literatures concerning periodic eigenvalue problems \cites{MR493421,MR3075381,MR2978285}.\par
\subsection{Notations}
We conclude the introduction by giving notations appearing in this paper, most of which are based on the standard convention. 
For $f\colon\R^d\to \C$, let $\boF f\colon\R^d\to\C$ be the Fourier transform of $f$, explicitly given by $\boF f (\xi) = \int_{\R^d} e^{-i\xi\cdot x} f(x) dx$. 
For two maps $\bom = {}^t(m_1,m_2,m_3)$, $\bn={}^t(n_1,n_2,n_3)\colon \R^d\to \R^3$, we write 
$\nab \bom: \nab \bn = 
\sum_{j=1,...,d}\sum_{k=1,2,3} \rd_{j} m_k \rd_ j n_k$. 
For $A,B\ge 0$, 
we write $A\sim B$ if $C^{-1} A\le B\le CA$, and $A\lesssim B$ if $A\le CB$, where 
$C>0$ is a constant independent of the parameters involved. 
The implicit constant $C>0$ in this paper will vary from line by line. 
For a set $\Om$, we denote the characteristic function with respect to $\Om$ by $\mathbbm{1}_\Om$. For a multi-index $\gam = (\gam_1,...,\gam_d)\in (\Z_{\ge 0})^d$, we set $|\gam| = \gam_1+...+\gam_d$. 
The Lebesgue space on a measure space $X$ with index $p\in [1,\infty]$ is denoted by $L^p(X)$, along with the associated Lebesgue norm $\nor{f}{L^p(X)}$. 
Letting $X$ be either the Euclidean space $\R^d$ or torus $\T^d$ for $d\in \N$, 
we write $W^{s,p}(X)$ to represent 
the inhomogeneous Sobolev space with Sobolev index $s\in\Z_{\ge 0}$ and Lebesgue index $p\in [1,\infty]$, the equipped norm of which is defined as $\nor{f}{W^{s,p}(X)} = \left[\sum_{|\al|\le s} \nor{\rd^\al f}{L^p(X)}^2 \right]^{1/2}$. 
If $p=2$, we denote $W^{s,2}(X) = H^s(X)$. 
We often abbreviate $L^p(X)$, $W^{s,p}(X)$, $H^{s}(X)$ as $L^p$, $W^{s,p}$, $H^s$ if it is clear from the context. 
For $\R^3$-valued function spaces, we write such as $L^p(X\colon\R^3)$, 
or simply $L^p(X)$ if no ambiguity arises. 
For a Hilbert space $X$, the associated inner product is denoted by $\inp{\cdot}{\cdot}_X$, or simply $\inp{\cdot}{\cdot}$ if there is no risk of confusion. 
In particular, 
the $L^2$-inner product is defined as $\inp{f}{g}_{L^2(X)} = \int_{X} f(x) \ovl{g(x)} dx$ for $f,g\in L^2(X)$, and 
$\inp{\bm{F}}{\bm{G}}_{L^2(X\colon \R^3)} = \int_{X} \bm{F}(x) :\bm{G}(x) dx$ for $\bm{F},\bm{G}\in L^2(X\colon \R^3)$. 
For measure spaces $X,Y$, numbers $p\in [1,\infty]$, $q\in [1,\infty]$, and a measurable function $f=f(x,y)\colon X\times Y\to\C$, 
we obey the convention that $\nor{f}{L^p_{x}L^q_{y}} = \left[ 
\int_X \nor{f(x,\cdot)}{L^q(Y)}^p dx\right]^{1/p}$ if $p<\infty$, whereas we replace by $\rm{esssup}_{x\in X}$ when $p=\infty$. 
Here, let $f(t,x)$ be a function of time $t$ and space $x$, 
and let $X$ be a normed space for functions in $x$. 
For any interval $(a,b)\subset\R$ and $p\in [1,\infty]$, 
we denote $\nor{f}{L^p(a,b\colon X)} = 
\left[ \int_a^b \nor{f(t,\cdot)}{X}^p dt\right]^{1/p}$, where the case $p=\infty$ is interpreted similarly. 
Specifically when $a=0$ and $b=T>0$, we simply write $\nor{f}{L^p_TX_x}= \nor{f}{L^p(0,T\colon X)}$. 
Finally, for $0<T\le \infty$, $f\in L^p_{\loc} (0,T\colon X)$ means that $\nor{f}{L^p(0,a\colon X)}<\infty$ for all $0<a<T$. 
%
%
%
%
%
%
%
%
%
\section{Reduction via moving frame}\label{S2}
In this section, we introduce a transformation which maps $\bom$ into a new $\C$-valued unknown function $u$ by using a moving frame. 
Then we derive the equation of $u$ 
from \eqref{E1.7}. 
The motivation behind this reduction arises 
from the fact that 
the perturbation $\bom$ obeys a constraint due to \eqref{E1.3}. 
However, 
when $|\bom|$ is sufficiently small, 
$\bom$ can be uniquely corresponded to 
a tangent vector of $\S^2$ at $\bh$. 
Then we associate with its coefficients of a fixed orthonormal frame of the tangent space. 
This framework is commonly referred to as the moving frame method, 
which is an effective tool to remove geometric constraints (see, for instance,  \cites{MR2233925,MR1737504,MR2725187,MR2462113}). In particular, our argument is similar to that in \cite{MR2462113}.\par 
To begin with, let us consider a time-independent map $\bn\colon \R^d\to\S^2$ with 
$\bn= \bh+ \bom$. 
Set
\begin{equation}\label{E2.05}
\bJ_1 =
\begin{pmatrix}
0 \\ -\sin x_1 \\ \cos x_1
\end{pmatrix}
,
\qquad
\bJ_2 =
\begin{pmatrix}
1 \\ 0 \\ 0
\end{pmatrix}
.
\end{equation}
Notice that $\{\bJ_1, \bJ_2, \bh \}$ forms an orthonormal frame of $\R^3$.  
Hence we can uniquely write
\begin{equation}\label{E2.1}
\bom = u_1 \bJ_1 + u_2 \bJ_2 + z \bh
\end{equation}
with $u_1,u_2,z \colon \R^d\to\R$. 
Then we set $u=u_1+iu_2$, which is a map from $\R^d$ to $\C$. 
Now we observe that the correspondence $\bom\mapsto u$ 
is one-to-one when the amplitudes are sufficiently small. 
Indeed, restricting to $|\bom|<1$, we have 
$|z|\le 
|\bom|<1$, and thus 
$z$ is uniquely determined by $u$ via
\begin{equation}\label{E2.2}
1 =|\bn|^2 = |u|^2 + (1+z)^2 \qquad \Longleftrightarrow \qquad
z = -1 +\sqrt{1 - |u|^2}.
\end{equation}
Throughout the paper, we consistently apply this transform for maps satisfying $|u|\le \frac 12$ everywhere. 
In such case, it follows from \eqref{E2.2} that 
\begin{equation}\label{E2.25}
|z| \lesssim |u|^2 ,\qquad |\nab z| \lesssim |u||\nab u|,
\end{equation}
which, in particular, implies 
\begin{equation}\label{E2.275}
|\bom|\sim |u|.
\end{equation}
Moreover, 
for $s\in \Z_{\ge 0}$ and $p\in [\frac d2, \infty]$ with $p>1$, it holds that
\begin{align}\label{E2.3}
\nor{\bom}{W^{s,p}(\R^d)} &\le C_{s,p}
\left(\nor{u}{W^{s,p}(\R^d)} + 
\nor{u}{W^{s,p}(\R^d)}^{s}\right), \\
\label{e2:2}
\nor{u}{W^{s,p}(\R^d)} &\le C_{s,p}  \nor{\bom}{W^{s,p}(\R^d)}.
\end{align}
For completeness, we will provide a proof of \eqref{E2.3} in Appendix \ref{SB}. 
Next, we derive the dynamical equation for $u$. 
The following computation is conducted in the case $d=3$, 
whereas 
the results in the case $d=1$, $d=2$ are obtained just by letting $\rd_2=\rd_3=0$, $\rd_3=0$, respectively. 
Let 
$\bom \in C([0,T] \colon H^s(\R^d))\cap L^2 (0,T\colon H^{s+1}(\R^d))$ 
be a solution to \eqref{E1.7} 
with $s\ge 2$ satisfying $\nor{u}{L^\infty_T L^\infty_x} \le \frac 12$. 
Let $u(t,\cdot)$ be the associated function in \eqref{E2.1} at each $t\in [0,T]$.  
Then, we have 
\[
\rd_t u = \rd_t \bom \cdot (\bJ_1+ i\bJ_2).
\]
Now we substitute \eqref{E1.7} and compute the right hand side. 
First note that for $j=2,3$, 
\begin{align}\label{Ea2.6}
\rd_1 \bJ_1 = -\bh,&&
\rd_1 \bh = \bJ_1,&& 
\rd_1 \bJ_2 = \rd_j \bJ_1 = \rd_j \bJ_2 = \rd_j \bh = 0.
\end{align}
By \eqref{Ea2.6}, direct computation yields
\begin{align*}
\Del \bom &=
(\Del u_1 - u_1 + 2 \rd_1 z) \bJ_1
+ \Del u_2 \bJ_2
+ (\Del z- 2\rd_1 u_1 -z) \bh,
\\
-\nab \times \bom
&=
(u_1+\rd_2 u_2 \cos x_1 + \rd_3 u_2 \sin x_1 - \rd_1 z)\bJ_1\\
&\qquad + (-\rd_2 u_1 \cos x_1 - \rd_3 u_1 \sin x_1 - \rd_2 z \sin x_1 + \rd_3 z \cos x_1) \bJ_2 \\
&\qquad 
+ (\rd_1 u_1 + \rd_2 u_2 \sin x_1 - \rd_3 u_2 \cos x_1 +z) \bh.
\end{align*}
Hence 
\begin{align*}
&\left[(\al - \bh\times \cdot ) (\Del \bom -\nab\times \bom)\right] 
\cdot (\bJ_1 + i\bJ_2)\\
&=
(\al-i) \left(
\Del u -i \rd_2 u \cos x_1 - i\rd_3 u \sin x_1 + \rd_1 z - i\rd_2 z \sin x_1
+ i \rd_3 z \cos x_1
\right),
\\
&\left[\bom\times (\Del \bom -\nab\times \bom)\right] 
\cdot (\bJ_1 + i\bJ_2)\\
&=
u_2 (\Del z -\rd_1 u_1 +\rd_2 u_2 \sin x_1 -\rd_3u_2\cos x_1) 
\\
&\qquad 
- z (\Del u_2 - \rd_2 u_1 \cos x_1 -\rd_3 u_1 \sin x_1 -\rd_2 z \sin x_1 +\rd_3 z \cos x_1) 
\\
&\qquad 
+i 
\left[
z (\Del u_1 +\rd_1 z +\rd_2 u_2 \cos x_1 + \rd_3 u_2 \sin x_1) \right.\\
&\hspace{40pt}\left.
- u_1 (\Del z - \rd_1 u_1 +\rd_2 u_2 \sin x_1 - \rd_3 u_2 \cos x_1)
\right] \\
&=
i(-\Del z +\rd_1 u_1 - \rd_2 u_2 \sin x_1 + \rd_3 u_2 \cos x_1 ) u \\
&\qquad 
+z (i\Del u + \rd_2 u \cos x_1 + \rd_3 u \sin x_1
+\rd_2 z \sin x_1 - \rd_3 z \cos x_1 + i\rd_1 z
).
\end{align*}
For each term of $\Gam(\bom)$, we have
\begin{align*}
2(\nab\bh: \nab \bom) 
&=
2(\rd_1 u_1 + z) ,
\\
\nab\times \bh \cdot \bom 
&=
-z ,
\\
\nab \times \bom \cdot \bh
&=
- \rd_1 u_1 -\rd_2 u_2 \sin x_1 + \rd_3 u_2 \cos x_1 -z
,\\
|\nab \bom|^2  
&=
|\nab u|^2 + |\nab z|^2 + u_1^2+z^2
+ 2z\rd_1 u_1 - 2 u_1 \rd_1 z,
\\
\nab \times \bom \cdot \bom 
&=
u_1 (-u_1 -\rd_2 u_2 \cos x_1 - \rd_3 u_2 \sin x_1 +\rd_1 z) \\
&\qquad + u_2 (\rd_2 u_1 \cos x_1 +\rd_3 u_1 \sin x_1 +\rd_2 z \sin x_1 - \rd_3 z \cos x_1) \\
&\qquad 
+z (-\rd_1 u_1 -\rd_2 u_2 \sin x_1 + \rd_3 u_2 \cos x_1 - z)
.
\end{align*}
Hence
\begin{align*}
&\Gam(\bom) \cdot (\bJ_1+i\bJ_2) \\
&= 
\left[
\rd_1 u_1 - \rd_2 u_2 \sin x_1 + \rd_3 u_2 \cos x_1
+ |\nab u|^2+ |\nab z|^2
 \right.\\
&\qquad + (u_2 \rd_2 u_1 - u_1 \rd_2 u_2) \cos x_1 
+(u_2 \rd_3 u_1 - u_1 \rd_3 u_2) \sin x_1 \\
&\qquad 
\left.
+ z\rd_1 u_1 - u_1 \rd_1 z
+ (u_2 \rd_2 z - z\rd_2 u_2) \sin x_1
+ (z \rd_3 u_2 - u_2\rd_3 z) \cos x_1
\right] u.
\end{align*}
%
%
Therefore, we obtain the equation of $u$ as
\begin{equation}\label{E2.4}
\rd_t u = (-\al + i)A u + \al N_1(u,\nab u) + N_2 (u,\nab u ,\nab^2 u),
\end{equation}
where
\begin{equation}\label{E2.5}
A = 
\left\{
\begin{aligned}
& -\rd_{1}^2 &\text{if } d=1,\\
&-\Del +i\cos x_1 \rd_2, \quad &\text{if } d=2,\\
&-\Del + i \cos x_1 \rd_2 + i\sin x_1 \rd_3, \quad &\text{if } d=3,
\end{aligned}
\right.
\end{equation}
and
\begin{align*}
&N_1(u,\nab u) \\
&= 
\rd_1 z - i\rd_2 z \sin x_1 + i\rd_3 z \cos x_1
\\
&\qquad 
+\left[ 
\rd_1 u_1 - \rd_2 u_2 \sin x_1 + \rd_3 u_2 \cos x_1
+ |\nab u|^2+ |\nab z|^2
\right.
\\
&\hspace{40pt} 
+ (u_2 \rd_2 u_1 - u_1 \rd_2 u_2) \cos x_1 
+(u_2 \rd_3 u_1 - u_1 \rd_3 u_2) \sin x_1
\\
&\hspace{40pt} 
\left.
+ z\rd_1 u_1 - u_1 \rd_1 z
+ (u_2 \rd_2 z - z\rd_2 u_2) \sin x_1
+ (z \rd_3 u_2 - u_2\rd_3 z) \cos x_1
\right] u,
\\
&N_2(u,\nab u,\nab^2 u) \\
&= 
-i\rd_1 z -\rd_2 z \sin x_1 + \rd_3 z \cos x_1
\\
&\qquad
+iu\left(
\Del z - \rd_1 u_1 + \rd_2 u_2 \sin x_1 - \rd_3 u_2 \cos x_1
\right) 
\\
&\qquad 
-z \left(
i\Del u + \rd_2 u \cos x_1 + \rd_3 u \sin x_1 + \rd_2 z \sin x_1 
- \rd_3 z \cos x_1 + i \rd_1 z
\right).
\end{align*}
\begin{remark}\label{R4}
The operator $A$ also arises when we consider the formal linearization of the energy around $\bh$. 
In detail, let $\bom\in C_0^\infty (\R^d\colon \R^3)$ be a function with $|\bh+\bom|\equiv 1$. 
Then, the difference of energy density between $\bh+\bom$ and $\bh$ is rewritten as
\begin{align*}
&\frac 12 |\nab (\bh+ \bom)|^2 + \frac 12 (\bh+\bom) \cdot \nab\times (\bh+\bom)
-\frac 12|\nab \bh|^2 - \frac 12 \bh\cdot \nab \times \bh \\
&= 
\nab \bh : \nab \bom + \frac 12 \bom \cdot \nab \times \bh + \frac 12 \bh \cdot \nab \times \bom 
+ \frac 12 |\nab \bom|^2 + \frac 12 \bom \cdot \nab \times \bom.
\end{align*}
Integrating the right hand side over $\R^d$, we obtain
\begin{align*}
&\int_{\R^d} 
\left(
\nab \bh : \nab \bom + \frac 12 \bom \cdot \nab \times \bh + \frac 12 \bh \cdot \nab \times \bom 
+ \frac 12 |\nab \bom|^2 + \frac 12 \bom \cdot \nab \times \bom\right) dx \\
&=
\int_{\R^d} 
\left(
-\Del \bh \cdot \bom +  \bom \cdot \nab \times \bh 
+ \frac 12 |\nab \bom|^2 + \frac 12 \bom \cdot \nab \times \bom\right) dx\\
&= 
\frac 12 \int_{\R^d} 
\left(|\nab \bom|^2 + \bom\cdot \nab\times \bom\right) dx \eqqcolon \boH[\bom],
\end{align*}
where we applied integration by parts and \eqref{E1.45}. 
$\boH$ can be viewed as the Hessian of energy at $\bh$ in the formal sense. 
Now we write $\bom$ in \eqref{E2.1}, and suppose that $|u|\le \frac 12$. 
Then, by \eqref{E2.2}, 
we have
\begin{equation}\label{E2.7}
\boH[\bom] = \int_{\R^d} \Re (Au) \ovl{u} dx + o \left(\nor{u}{H^1(\R^d)}^2\right).
\end{equation}
\end{remark}
\begin{remark}\label{Rxx5}
We briefly remark on the case for general parameter $\ka>0$ in \eqref{E1.425}. 
For the solution $\bn$ to \eqref{E1.4}, 
we consider the perturbation $\bom^\ka = \bn-\bh^\ka$. 
Substituting this into \eqref{E1.4} yields the evolution equation for $\bom^\ka$ as
\[
\rd_t \bom^\ka = (\al - \bh^\ka\times \cdot) (\Del \bom^\ka -\nab \times \bom^\ka + (\ka^2-\ka) \bom^\ka) + \text{higher order terms}.
\]
Here, the moving frame analogous to \eqref{E2.05} is $\bJ^{\ka}_1={}^t(0,\cos \ka x_1, \sin \ka x_1)$ and 
$\bJ^{\ka}_2 = {}^t(1,0,0)$. 
If we set $\bom^\ka= u_1 \bJ_1^\ka + u_2 \bJ^\ka_2 + z \bh^\ka$ and $u=u_1+iu_2$, the equation for $u$ becomes
\begin{align*}
\rd_t u = L_\la u + o(u),
\end{align*}
where
\begin{align*}
L_\ka = (-\al+i) (-\Del u + i \cos \ka x_1 \rd_2 u
+i \sin \ka x_1 \rd_3 u
 - i (\ka^2-\ka)u_2).
\end{align*}
Note that $L_\ka$ is $\C$-linear if and only if $\ka=1$. 
\end{remark}
%
%
%
%
%
%
%
%
%
%
%
\section{Spectral analysis}\label{S3}
\subsection{Bloch--Floquet decomposition}
In this section, we investigate the spectral properties of $A$ defined in \eqref{E2.5} for $d=2,3$. 
In our analysis, we employ the Bloch--Floquet theory for the periodic differential operators. 
To begin with, we recall some standard results from the general theory needed for the following argument. 
For more about the theory, we refer the interested reader to 
Section XIII.16 in \cite{MR493421}, 
Chapter 1 in \cite{MR2978285}, and \cite{MR3075381}. 
See also Appendix A in \cite{MR1482940} 
for detailed rigorous verification of the formulae \eqref{E3.55} and \eqref{E3.8}. \par
We set $\T = \R/(2\pi \Z)$ and $\T^* = (\R/\Z) \times \R^{d-1}$. 
In our argument, 
we identify $\R/(2\pi\Z)$, $\R/\Z$ with $[0,2\pi)$, $[0,1)$ respectively. 
For $\xi_1 \in \R/\Z$, we define
\[
|\xi_1|_* = \min_{n\in\Z} |\xi_1 -n| = \min\{\xi_1 , 1-\xi_1 \},
\]
which is the distance between $\xi_1$ and $0$ in $\R/\Z$. 
We also write $|\xi|_* = (|\xi_1|_*^2 + (\xi')^2)^{1/2}$ for $\xi= (\xi_1,\xi')\in \T^*$. 
For $f\in L^2(\R^d)$, we define $\BF[f] \colon \T^* \times \T \to \C$ as
\begin{equation}
  \BF [f] (\xi, x_1) = \frac 1{(2\pi)^d} \sum_{k\in \Z} e^{ikx_1} \boF [f] (\xi_1 + k , \xi'),
\end{equation}
which is the combination of the Bloch transform in $x_1$ and the Fourier transform in the other variables. 
This is referred to as the Bloch--Fourier transform in \cites{MR2601067, MR2788923}. 
It is known that $\BF$ is an isometry from $L^2 (\R^d)$ to $L^2 (\T^* \times \T)$ up to constant multiple, namely, 
\begin{equation}\label{E3.3}
\nor{\BF [u]}{L^2 (\T^*\times \T)}^2 = \frac{1}{(2\pi)^{d-1}} \nor{u}{L^2(\R^d)}^2.
\end{equation}
For $U=U(\xi ,x_1) \colon \T^* \times \T\to \C$, 
the inversion $\BF^{-1}$ is explicitly represented as
\begin{equation}
  \BF^{-1} [U] (x) = 
\int_{\T^*} e^{i\xi\cdot x} U(\xi,x_1) d\xi
= \int_{0}^{1} \int_{\R^{d-1}} e^{i \xi\cdot x} U(\xi,x_1) d\xi'd\xi_1, 
\end{equation}
where $U(\xi,\cdot)$ in the integrand is interpreted as a $2\pi$-periodic function on $\R$, naturally extended from $[0,2\pi)$. 
The corresponding inversion formula
\begin{equation}\label{E3.5}
  u(x) =
  \int_{\T^*} e^{i\xi\cdot x} \BF [u] (\xi , x_1) d\xi
\end{equation}
is referred to as the direct integral (see \cite{MR493421}). \par
Now, let $A$ be the operator as in \eqref{E2.5}. 
By the Kato--Rellich theorem (see \cite{MR1335452}*{Theorem V.4.3}), 
it is straightforward to see that $A$ is a self-adjoint operator on $L^2(\R^d)$ 
with $D(A)= H^2(\R^d)$.  
Through \eqref{E3.5}, $A$ is represented as
\begin{equation}\label{E3.55}
A u(x) =
\int_{\T^*} e^{i\xi\cdot x} A_\xi \BF [u] (\xi , x_1) d\xi,
\end{equation}
where $A_\xi$ is the self-adjoint operator on $L^2(\T)$ with $D(A_\xi)=H^2(\T)$, defined by
\begin{equation}
\begin{aligned}
A_\xi =  e^{-i\xi\cdot x} A e^{i\xi\cdot x}
=
\begin{cases}
-\rd_{x_1}^2 -2i \xi_1 \rd_{x_1} +|\xi|^2 - \xi_2 \cos x_1,  & \text{if } d=2,\\
-\rd_{x_1}^2 -2i \xi_1 \rd_{x_1} + |\xi|^2 - \xi_2 \cos x_1 
- \xi_3 \sin x_1,  & \text{if } d=3
\end{cases}
\end{aligned}
\end{equation}
for $\xi=(\xi_1,\xi')\in \T^*$. 
Regarding the spectrum of $A$, 
the following identity holds, known as the \textit{Bloch--Floquet decomposition}:
\begin{equation}\label{E3.8}
  \si (A) =
  \ovl{
    \bigcup_{\xi \in \T^*} \si(A_\xi)
  }.
\end{equation}
Thanks to \eqref{E3.8}, the spectral analysis for $A$ can be reduced to that for $A_\xi$.
\subsection{Eigenvalue problem for $A_\xi$}
Let $\xi \in \T^*$. 
We first note that 
the eigenvalue problem $A_{\xi_1,\xi'} f = \la f$ on $\T$ is equivalent to
\begin{equation}\label{E3.9}
A_{0,\xi'} g 
\end{equation}
\begin{equation}\label{E3.10}
g=g(x_1)\colon [0,2\pi]\to\C,\quad
g(0)= e^{-2\pi i\xi_1} g(2\pi),\quad g'(0) =e^{-2\pi i \xi_1 }  g' (2\pi).
\end{equation}
Indeed, these are associated via $g(x_1) =e^{i\xi_1 x_1} f(x_1)$. 
Here, \eqref{E3.9} can be written as
\[
\rd_{x_1}^2g + \left(\la-\xi_2^2 + \xi_2 \cos x_1\right)g  = 0
\]
in the case $d=2$, and 
\[
\rd_{x_1}^2g + \left(\la-\xi_2^2-\xi_3^2 + \xi_2 \cos x_1 + \xi_3 \sin x_1\right)g = 0
\]
in the case $d=3$. 
These coincide with the Mathieu equation, a special class of Hill's equation, 
for which the general theory for the periodic eigenvalue problem 
is well established. 
We recall here some of the know results.  
For the details of these facts, we refer the reader to 
\cite{MR493421}*{Section XIII.16}, \cite{MR2978285}*{Chapter 1}, \cite{MR3075381} for example. 
First, the spectrum of $A_\xi$, $\si(A_\xi)$, consists only of discrete spectrum: 
$\si(A_{\xi}) = \{\la_n(\xi) \}_{n=0}^\infty\subset \R$ 
with $\la_n(\xi)$ strictly increasing in $n$. 
$\la_n(\xi)$ is continuous in $\xi_1\in \R/\Z$, and every spectrum is simple (see 
Section XIII.16, Example 1 in \cite{MR493421} or 
Theorem 1.7.1 in \cite{MR2978285}). 
Accordingly, we set the corresponding eigenfunction by $\phi_n(\xi,\cdot)$ with 
$\nor{\phi_n(\xi,\cdot)}{L^2(\T)}=1$. 
Here, there is ambiguity of the phase in the definition of $\phi_n$, 
but whichever choice will work in the subsequent argument except $\phi_0$; 
we will later fix a specific phase for $\phi_0$ near $\xi=0$ in Proposition \ref{P3}. 
Concerning the dependence of $\la_n(\xi)$ on $\xi_1$, the following monotonicity is known:
\begin{lem}[\cites{MR493421,MR3075381}]\label{L1}
For every $\xi'\in\R^{d-1}$, the map $\xi_1 \mapsto \la_n (\xi_1,\xi')$ is monotonically increasing on 
$[0,\frac 12]$ if $n$ is even, while monotonically decreasing on $[0,\frac 12]$ 
if $n$ is odd. 
Moreover, we have
\[
\la_0 (0,\xi') < \la_0 (\frac 12, \xi') < \la_1 (\frac 12, \xi')
< \la_1 (0,\xi') < \la_2 (0, \xi') < \la_2 (\frac 12 ,\xi') < \cdots
\]
with $\la_n(0,\xi'), \la_n (\frac 12 ,\xi') \to \infty$ as $n\to\infty$ for all $\xi'\in\R^{d-1}$.
\end{lem}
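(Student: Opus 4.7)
The strategy is to recognize \eqref{E3.9}--\eqref{E3.10} as a Hill's equation with Floquet boundary conditions and then to invoke the classical band theory for Hill operators that the lemma already cites. Fix $\xi'\in\R^{d-1}$ as a parameter; as shown in the excerpt, the substitution $g(x_1)=e^{i\xi_1 x_1}f(x_1)$ transforms the eigenvalue problem for $A_{\xi_1,\xi'}$ into
\[
-g''+q(x_1;\xi')g=\la g,\qquad x_1\in[0,2\pi],
\]
with $q(\cdot;\xi')$ a real-valued $2\pi$-periodic Mathieu-type potential depending smoothly on $\xi'$, together with the quasi-periodic conditions \eqref{E3.10}. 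The dependence on $\xi_1$ now enters only through the boundary data.

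I would next introduce the fundamental solutions $y_1,y_2$ of the above equation normalized by $y_1(0)=1,\ y_1'(0)=0,\ y_2(0)=0,\ y_2'(0)=1$, the monodromy matrix $M(\la;\xi')$ built from their values at $x_1=2\pi$, and the discriminant $\Delta(\la;\xi'):=\tr M(\la;\xi')$. Abel's identity gives $\det M\equiv 1$, so \eqref{E3.10} admits a nontrivial solution precisely when $e^{-2\pi i\xi_1}$ is an eigenvalue of $M(\la;\xi')$, equivalently
\[
\Delta(\la;\xi')=2\cos(2\pi\xi_1).
\]
Hence the eigenvalues $\{\la_n(\xi_1,\xi')\}_{n\ge 0}$ are exactly the preimages of $2\cos(2\pi\xi_1)$ under the entire function $\la\mapsto\Delta(\la;\xi')$; in particular $\la_n(0,\xi')$ are the periodic eigenvalues $\{\Delta=2\}$ and $\la_n(\tfrac12,\xi')$ the antiperiodic eigenvalues $\{\Delta=-2\}$.

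The monotonicity and interlacing then follow from the classical Hill structure recalled in \cite{MR493421}*{Section XIII.16} and \cite{MR3075381}: the set $\{\la:|\Delta(\la;\xi')|\le 2\}$ is a disjoint union of closed bands $B_0(\xi')<B_1(\xi')<\cdots$ accumulating at $+\infty$, on each of which $\Delta(\cdot;\xi')$ is strictly monotone, decreasing from $2$ to $-2$ on $B_{2k}(\xi')$ and increasing from $-2$ to $2$ on $B_{2k+1}(\xi')$. Since $\xi_1\mapsto 2\cos(2\pi\xi_1)$ is a strictly decreasing bijection of $[0,\tfrac12]$ onto $[-2,2]$, composing with the inverse of $\Delta(\cdot;\xi')|_{B_n(\xi')}$ shows that $\la_{2k}(\cdot,\xi')$ is strictly increasing and $\la_{2k+1}(\cdot,\xi')$ strictly decreasing on $[0,\tfrac12]$. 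Matching the endpoints of $B_n(\xi')$ with $\xi_1=0,\tfrac12$ according to the sign of $\Delta$ at those endpoints yields the claimed chain of inequalities, and the divergence $\la_n(0,\xi'),\la_n(\tfrac12,\xi')\to\infty$ is immediate from the accumulation of band endpoints at $+\infty$.

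The main obstacle, were one to insist on a self-contained proof, is the strict monotonicity of $\Delta(\cdot;\xi')$ on each band interior (equivalently, the absence of coexistence phenomena away from band edges); this is the heart of Hill's theorem and relies on the analytic-function structure of $\Delta$ rather than mere Sturm--Liouville comparison. Since the lemma itself already cites \cites{MR493421,MR3075381} for this input, my plan is simply to quote it.
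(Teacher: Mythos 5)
Your proposal is correct and matches the paper's treatment: the paper gives no proof of Lemma \ref{L1}, deferring entirely to the classical Floquet--Hill band theory in the cited references, and your discriminant argument ($\Delta(\la;\xi')=2\cos(2\pi\xi_1)$, strict monotonicity of $\Delta$ on each band, interlacing of periodic and antiperiodic eigenvalues) is exactly the standard content of those references. Since you, too, ultimately quote \cites{MR493421,MR3075381} for the key monotonicity/no-coexistence input, the two approaches coincide.
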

Now we turn to investigate further properties of $\la_n(\xi)$ and $\phi_n(\xi,\cdot)$. 
In the subsequent subsections, 
we apply two different methods to examine the spectrum. 
First, we apply the min-max principle to obtain lower bounds for $\la_n (\xi)$, which hold true globally in $\xi$ (see Proposition \ref{P2}). 
Second, we carry out the Lyapunov--Schmidt reduction to 
derive more precise asymptotics of $\la_0(\xi)$ and $\phi_0(\xi,x_1)$ near $\xi=0$ (see Proposition \ref{P3}).\par
Prior to the main analysis, we provide two preliminary observations. 
First, in the case $d=3$, $A_\xi$ can be rewritten as
\begin{gather*}
A_\xi = -\rd_{x_1}^2 - 2i\xi_1 \rd_{x_1} +\xi_1^2+ |\xi'|^2 -|\xi'| \cos(x_1 - \al_{\xi'}),\\
    \cos \al_{\xi'} = \frac{\xi_2}{|\xi'|},\quad  \sin \al_{\xi'} = \frac{\xi_3}{|\xi'|},
\end{gather*}
which is equivalent to
\begin{align}\label{E3.105}
\boT_{\al_{\xi'}}^{-1} A_\xi \boT_{\al_{\xi'}} =  -\rd_{x_1}^2 - 2i\xi_1 \rd_{x_1} +\xi_1^2 + |\xi'|^2 -|\xi'| \cos x_1,
\end{align}
where $\boT_c f= f(\cdot -c)$ for $f\in L^2(\T)$, $c\in\R$. 
This allows us to reduce the spectral analysis in the 3D case to that in the 2D case. More precisely, $\la_n(\xi_1,\xi')$ in the 3D case is equal to $\la_n(\xi_1,|\xi'|)$ in the 2D case, and the corresponding eigenfunction is simply related to $\boT_{\al_{\xi'}} \phi_n(\xi_1,|\xi'|,\cdot)$.\par
The second observation is the symmetry of $\la_n(\xi)$ on $\xi$.
\begin{lem}
For all $\xi=(\xi_1,\xi')\in \T^*$ and $n\in \Z_{\ge 0}$, we have
\begin{equation}\label{E3.11}
\la_n(\xi_1,\xi') = \la_n (\xi_1,-\xi'),\qquad \text{and}\qquad
\la_n(\xi_1,\xi') = \la_n (1-\xi_1,\xi').
\end{equation}
\end{lem}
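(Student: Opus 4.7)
The plan is to realize each identity as the spectral consequence of a suitable conjugation that intertwines $A_{\xi_1,\xi'}$ with the operator appearing on the other side of the equation. Since the eigenvalues $\{\la_n(\xi)\}_{n\ge 0}$ are simple and ordered monotonically, any unitary or antiunitary conjugation that preserves the spectrum forces the ordered lists to coincide term by term.

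For the first identity $\la_n(\xi_1,\xi')=\la_n(\xi_1,-\xi')$, I would use the translation $\boT_\pi$, which is unitary on $L^2(\T)$ and preserves $D(A_\xi)=H^2(\T)$. Since $\cos(x_1+\pi)=-\cos x_1$ and $\sin(x_1+\pi)=-\sin x_1$, while $\rd_{x_1}$ and $\rd_{x_1}^2$ are translation invariant, a short direct computation yields
\[
\boT_\pi^{-1} A_{\xi_1,\xi'}\boT_\pi = A_{\xi_1,-\xi'}
\]
in both $d=2$ and $d=3$. Unitary conjugation preserves the spectrum, so the ordered eigenvalues of $A_{\xi_1,\xi'}$ and $A_{\xi_1,-\xi'}$ coincide.

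For the second identity, note first that $-\xi_1$ and $1-\xi_1$ represent the same element of $\R/\Z$, so it suffices to prove $\la_n(\xi_1,\xi')=\la_n(-\xi_1,\xi')$. Here I would invoke the antiunitary complex conjugation $C\colon f\mapsto \bar f$ on $L^2(\T)$, which again preserves $H^2(\T)$. The only term in $A_{\xi_1,\xi'}$ that is not invariant under conjugation is $-2i\xi_1\rd_{x_1}$, whose conjugate is $+2i\xi_1\rd_{x_1}$, so
\[
C A_{\xi_1,\xi'} C = A_{-\xi_1,\xi'}.
\]
Consequently, if $A_{\xi_1,\xi'}\phi=\la\phi$ with $\la\in\R$, then $A_{-\xi_1,\xi'}\bar\phi=\la\bar\phi$, and conversely. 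The discrete ordered spectra therefore agree, which combined with the $\R/\Z$-identification yields the second identity.

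There is essentially no technical obstacle; the only points to double-check are that $\boT_\pi$ and $C$ both preserve $D(A_\xi)$ (immediate, since each is an isometry that commutes with $\rd_{x_1}$ up to sign), and that the simplicity of the eigenvalues from Lemma \ref{L1} allows one to pass from equality of spectra to equality of the individually labelled $\la_n$'s.
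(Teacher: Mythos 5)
Your proposal is correct and follows essentially the same route as the paper: the first identity via conjugation by $\boT_\pi$, and the second via complex conjugation combined with the $\xi_1\mapsto\xi_1+1$ gauge equivalence $A_{\xi_1+1,\xi'}=e^{-ix_1}A_{\xi_1,\xi'}e^{ix_1}$, which is exactly the paper's single formula $A_{1-\xi_1,\xi'}=e^{-ix_1}\boR A_\xi \boR e^{ix_1}$ written in two steps. Your explicit remark that simplicity and monotone ordering of the $\la_n$ upgrade equality of spectra to equality of the labelled eigenvalues is a point the paper leaves implicit.
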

\begin{proof}
The first identity follows from
  \[
    A_{\xi_1,-\xi'} = \boT_{\pi}^{-1} A_{\xi} \boT_\pi ,
  \]
while the latter is obtained from the formula
  \[
    A_{1-\xi_1,\xi'} = e^{-ix_1} \boR A_\xi \boR e^{ix_1},
  \]
where $\boR f = \ovl{f}$. 
\end{proof}
\subsection{Lower bound of the spectrum}
We first claim the following estimates for $\la_n(\xi)$.
\begin{prop}\label{P2}
There exist $\te_0>0$ and $C>0$ such that for any $\xi\in \T^*$, we have
\begin{gather}
\label{E3.12}
\la_0(\xi) \ge \te_0 |\xi|_*^2.\\
\label{Ea3.13}
\la_n (\xi) \ge C,\quad n\ge 1.
\end{gather}
\end{prop}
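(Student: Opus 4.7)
The plan is to establish both bounds via the min--max principle after two reductions. By \eqref{E3.105} in the $d=3$ case and the symmetry \eqref{E3.11}, we may work with
\[ A_\xi = (-i\partial_{x_1}+\xi_1)^2 + \mu^2 - \mu \cos x_1 \]
on $L^2(\T)$ with $\mu = |\xi'| \ge 0$ and $\xi_1 \in [0,1/2]$, so that $|\xi_1|_* = \xi_1$. I would then split the analysis by the size of $\mu$.

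In the high-frequency regime $\mu \ge 2$, the pointwise bound $\mu^2 - \mu \cos x_1 \ge \mu^2 - \mu \ge \tfrac{1}{2}\mu^2$ combined with the spectral bound $(-i\partial_{x_1}+\xi_1)^2 \ge \xi_1^2$ gives $A_\xi \ge \xi_1^2 + \tfrac{1}{2}\mu^2$ as operators, so that $\lambda_0(\xi) \gtrsim |\xi|_*^2$ and $\lambda_n(\xi) \ge \lambda_0(\xi) \ge 2$ for $n \ge 1$.

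In the bounded regime $\mu \in [0,2]$, I would decompose $L^2(\T) = W \oplus W^\perp$ with $W = \mathrm{span}(\psi_0, \psi_{-1})$ and $\psi_n = e^{inx_1}/\sqrt{2\pi}$ the two near-ground Fourier modes of $(-i\partial_{x_1}+\xi_1)^2$ on $[0,1/2]$. In this splitting,
\[ A_\xi\big|_W = \begin{pmatrix} \xi_1^2 + \mu^2 & -\mu/2 \\ -\mu/2 & (1-\xi_1)^2 + \mu^2 \end{pmatrix}, \qquad A_\xi\big|_{W^\perp} \ge (1+\xi_1)^2 + \mu^2 - \mu \ge 3/4, \]
and the off-diagonal block has norm $\le \mu/2$ (since $-\mu\cos x_1$ sends $\psi_0 \mapsto -(\mu/2)\psi_1$ and $\psi_{-1}\mapsto -(\mu/2)\psi_{-2}$). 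By the Schur complement, the condition $A_\xi \ge \lambda$ for $\lambda < 3/4$ reduces to the positivity of an effective $2 \times 2$ operator $A_\xi|_W - \lambda - C$ where the Schur correction $C$ is positive semidefinite of norm $\lesssim \mu^2$. Computing the smaller eigenvalue of $A_\xi|_W - \lambda$ in closed form as $(\sqrt p - 1/2)^2 + \mu^2/4 - \lambda$ with $p = (\xi_1-1/2)^2 + \mu^2/4$ and combining with the correction bound, one checks that this is nonnegative when $\lambda = c(\xi_1^2+\mu^2)$ for a universal $c > 0$, yielding $\lambda_0(\xi) \ge c|\xi|_*^2$.

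For $\lambda_n(\xi) \ge C$ with $n \ge 1$, the monotonicity in Lemma \ref{L1} gives $\lambda_1(\xi) \ge \lambda_1(1/2, \xi')$. Repeating the block decomposition at $\xi_1 = 1/2$, one has $A_\xi|_W - 1/4$ with eigenvalues $\mu^2 \pm \mu/2$ (at most one negative), $A_\xi|_{W^\perp} - 1/4 \ge 7/4$ (no negative eigenvalues), and a positive semidefinite Schur correction of order $\mu^2$ that preserves this inertia count. Hence $A_{1/2,|\xi'|} - 1/4$ has at most one negative eigenvalue, i.e., $\lambda_1(1/2, \xi') \ge 1/4$. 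The \emph{main obstacle} I anticipate is the uniform control of the Schur correction in the $\lambda_0$-bound on the delicate subregion where $(1/2-\xi_1)$ and $\mu$ are of comparable size, where $A_\xi|_W$ becomes near-degenerate and the $O(\mu^2)$ correction competes with the quadratic margin we are trying to extract.
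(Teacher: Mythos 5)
Your strategy is correct, and it is a genuinely different organization of the argument from the paper's. The paper also reduces to $d=2$ via \eqref{E3.105} and works with the quadratic form $\inp{A_\xi u}{u}$, but it splits cases in $\xi_1$ rather than in $\mu=|\xi'|$: for $0\le\xi_1\le\frac1{16}$ it expands $u$ over $\psi_0,\psi_1,\psi_{-1}$ plus an orthogonal remainder $w$, writes out the form explicitly, and absorbs the cross terms $\xi_2 a_0\Re a_{\pm1}$ by Young's inequality with a tuned parameter $\eps=\frac18$; for $\frac1{16}\le\xi_1\le\frac12$ it simply invokes the monotonicity of Lemma \ref{L1} together with the first case. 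For \eqref{Ea3.13} the paper is cheaper than your inertia count: once \eqref{E3.12} is known, Lemma \ref{L1} gives $\la_n(\xi)\ge\la_1(\tfrac12,\xi')>\la_0(\tfrac12,\xi')\ge\tfrac14\te_0$ directly. Your route (exact diagonalization of the $2\times2$ block on $\Span\{\psi_0,\psi_{-1}\}$ plus a Schur complement) buys a closed-form expression for the low-mode eigenvalue and a transparent picture of where the spectral gap comes from; the paper's buys shorter bookkeeping and avoids inverting $D-\la$.

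Two points need attention. First, a small arithmetic slip: the smaller eigenvalue of $A_\xi|_W$ is $(\sqrt p-\tfrac12)^2+\tfrac34\mu^2$, not $(\sqrt p-\tfrac12)^2+\tfrac14\mu^2$; since the Schur correction has norm at most $\frac{\mu^2/4}{3/4-\la}\le\tfrac12\mu^2$ once $\la$ is bounded away from $\tfrac34$, the quantity you actually need to bound below is $(\sqrt p-\tfrac12)^2+\tfrac14\mu^2$, so state explicitly that the correction has been folded in. Second, the ``main obstacle'' you flag in the near-degenerate region does resolve, by an elementary dichotomy: with $b=\mu/2$ one has $\sqrt p-\tfrac12\in[-\xi_1,\,-\xi_1+b]$, so if $b\le\xi_1/2$ then $(\sqrt p-\tfrac12)^2\ge\xi_1^2/4$, while if $b>\xi_1/2$ then $\mu^2/8=b^2/2>\xi_1^2/8$; in either case $(\sqrt p-\tfrac12)^2+\tfrac18\mu^2\ge\tfrac18\xi_1^2$, and the remaining $\tfrac18\mu^2$ supplies the $|\xi'|^2$ part, yielding $\la_0(\xi)\ge\tfrac18|\xi|_*^2$ on this regime. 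Finally, in the inertia count at $\xi_1=\tfrac12$ you should note why the PSD correction cannot create a second negative eigenvalue: the trace of $A_\xi|_W-\tfrac14$ minus the correction is at least $2\mu^2-\tfrac27\mu^2>0$ for $\mu>0$, so the $2\times2$ effective block has at most one negative eigenvalue. With these details supplied, your proof is complete.
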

\begin{remark}
Proposition \ref{P2}, combined with \eqref{E3.8}, shows that 
$\si(A)\subset [0,\infty)$, which guarantees the stability of the corresponding linear flow in the spectral sense. 
Also from the viewpoint of \eqref{E2.7},  
this implies that the leading term of the formal Hessian of the energy at $\bh$ is nonnegative. 
\end{remark}
\begin{proof}
By the equivalence \eqref{E3.105}, 
it suffices to prove in the case $d=2$. 
Once \eqref{E3.12} is established, \eqref{Ea3.13} 
follows immediately from Lemma \ref{L1}, 
since $\la_n(\xi_1,\xi_2) > \la_0 (\frac 12 , \xi_2) \ge \frac 14 \te_0$. 
Thus we focus on the proof of \eqref{E3.12}. 
For this sake, we may suppose 
$0\le \xi_1\le \frac 12$ by \eqref{E3.11}, 
and hence $|\xi_1|_* = \xi_1$. 
We divide the proof into two cases. \par
\textit{Case I}. Suppose that $0\le \xi_1\le \frac 1{16}$. 
Since $A_\xi$ is self-adjoint on $L^2(\T)$, 
it suffices to show that for some $\te_0>0$, we have
\begin{equation}\label{Ec3.13}
\inp{A_\xi u}{u}_{L^2(\T)} \ge \te_0 |\xi|^2 \nor{u}{L^2(\T)}^2
\end{equation}
for all $u\in H^2(\T)$ (see Section V.10 in \cite{MR1335452}). 
First, we have
\begin{equation}\label{Eb3.13}
    \begin{aligned}
      \inp{A_\xi u}{u}_{L^2(\T)}
       & =
      \inp{
        (-(\rd_{x_1} + i \xi_1)^2 + \xi_2^2 - \xi_2 \cos x_1) u
      }{u}_{L^2(\T)} \\
       & =
      \nor{(\rd_{x_1} + i\xi_1) u}{L^2(\T)}^2 + \xi_2^2 \nor{u}{L^2(\T)}^2
      -\xi_2 
      \int_{0}^{2\pi} \cos x_1 |u|^2 dx_1.
    \end{aligned}
\end{equation}
Now we decompose
\[
    u = a_0 \psi_0 + v,\qquad \text{with}\quad \psi_0 = \frac{1}{\sqrt{2\pi}},\quad \psi_0 \perp v,
\]
where $f\perp g$ for $f,g\in L^2(\T)$ means $\inp{f}{g}_{L^2(\T)} =0$. 
We further write
\begin{equation*}
    v = a_1 \psi_1 + a_{-1} \psi_{-1} + w,\qquad
    \text{with}\quad
    \psi_{\pm 1} = \frac{1}{\sqrt{2\pi}} e^{\pm ix_1}, \quad
    w \perp \psi_0, \psi_1, \psi_{-1}.
\end{equation*}
By modifying the phase of $u$, we may assume $a_0\in \R$ without loss of generality. Then we have
\[
(\rd_{x_1} +i\xi_1)u = i\xi_1a_0 \psi_0 
+ i (1+\xi_1)a_1 \psi_1 + i(-1+\xi_1)a_{-1} \psi_{-1} + (\rd_{x_1} + i\xi_1)w.
\]
Noting that $(\rd_{x_1} + i\xi_1)w \perp \psi_0,\psi_1,\psi_{-1}$, we obtain
\begin{align*}
&\nor{(\rd_{x_1} + i\xi_1) u}{L^2(\T)}^2\\
&=
\xi_1^2 a_0^2 + (1+\xi_1)^2 |a_1|^2 + (-1+\xi_1)^2 |a_{-1}|^2 +
\nor{(\rd_{x_1} + i\xi_1) w}{L^2(\T)}^2.
\end{align*}
Moreover, we have
\begin{gather*}
\nor{u}{L^2(\T)}^2 = a_0^2 + |a_1|^2 + |a_{-1}|^2 + \nor{w}{L^2(\T)}^2,\\
\begin{aligned}
\int_{0}^{2\pi} \cos x_1 |u|^2 dx_1
&=
\int_{0}^{2\pi} \cos x_1 |a_0 \psi_0 + v|^2 dx_1 \\ 
&=
\int_{0}^{2\pi} \cos x_1 \left(\sqrt{\frac 2\pi} a_0 \Re v + |v|^2 \right) dx_1.
\end{aligned}
\end{gather*}
Now we observe that \eqref{Eb3.13} implies
\begin{align*}
      \cos x_1 \Re v
       & =
      \frac{e^{ix_1} + e^{-ix_1}}{2} 
      \Re (a_1 \psi_1 + a_{-1} \psi_{-1}) 
      + \cos x_1 \Re w
      \\
       & =
      \frac{1}{\sqrt{2\pi}}
      \frac{a_1 + \ovl{a_{-1}}}{4}
      (e^{2ix_1} +1)
      +
      \frac{1}{\sqrt{2\pi}}
      \frac{a_{-1} + \ovl{a_{1}}}{4}
      (e^{-2ix_1} + 1)
      +
      \cos x_1 \Re w.
\end{align*}
Since $w\perp \psi_{\pm 1}$ in $L^2(\T)$, 
we have $\int_0^{2\pi} \cos x_1 \Re w\, dx_1=0$. By noting this, it follows that 
\[
\int_0^{2\pi} \cos x_1 \Re v dx_1
=
\sqrt{2\pi}
\frac{a_1 + \ovl{a_{-1}}}{4}
+
\sqrt{2\pi} \frac{a_{-1} + \ovl{a_{1}}}{4}
=
\sqrt{\frac{\pi}{2}} \left( \Re a_1 + \Re a_{-1} \right).
\]
Consequently, \eqref{Eb3.13} is rewritten as
\begin{align*}
\inp{A_\xi u}{u}_{L^2(\T)}
=&
\xi_1^2 a_0^2 + (1+\xi_1)^2 |a_1|^2 + (-1 + \xi_1)^2 |a_{-1}|^2
+ \nor{(\rd_{x} + i\xi_1)w}{L^2(\T)}^2 \\
& + \xi_2^2 \left( a_0^2 + |a_1|^2 + |a_{-1}|^2 + \nor{w}{L^2(\T)}^2 \right) \\
&-\xi_2 a_0\left(\Re a_1 + \Re a_{-1}\right)
- \xi_2 \int_0^{2\pi}\cos x_1 |v|^2 dx_1.
\end{align*}
Here, let $\eps\in (0,1)$ be a number determined later. Then, Young's inequality yields
\[
\left|\xi_2 a_0 
\Re a_{\pm 1}  \right|
\le 
\frac 12 \left(
(1-\eps)  \xi_2^2 a_0^2 + \frac{1}{1-\eps} |a_{\pm 1}|^2
\right).
\]
Moreover, since $w\perp \psi_0,\psi_{\pm 1}$ in $L^2(\T)$, the theory of Fourier series gives
\[
\nor{(\rd_{x_1} + i\xi_1) w}{L^2(\T)}^2 \ge (2-\xi_1)^2 \nor{w}{L^2(\T)}^2.
\]
Thus
\begin{align*}
\inp{A_{\xi} u}{u}_{L^2(\T)}
     & \ge
    \left((\xi_1+1)^2 - \frac{1}{2(1-\eps)}\right) |a_1|^2
    + \left((\xi_1-1)^2 - \frac{1}{2(1-\eps)}\right) |a_{-1}|^2\\
    & \hspace{30pt}+(2-\xi_1)^2 \nor{ w}{L^2(\T)}^2  + \xi_1^2 a_0^2
    \\
     & \hspace{30pt}
    + \xi_2^2 \left( \eps a_0^2 +  |a_1|^2 +  |a_{-1}|^2 + \nor{w}{L^2(\T)}^2\right)
    - \xi_2 \int_0^{2\pi}
    \cos x_1 |v|^2 dx_1.
\end{align*}
Since $\nor{v}{L^2(\T)}^2 = |a_1|^2+|a_{-1}|^2 + \nor{w}{L^2(\T)}^2$ and $0\le \xi_1 \le \frac 1{16}$, the first line is bounded from below by
\[
\left((\xi_1-1)^2 - \frac{1}{2(1-\eps)}\right) \nor{v}{L^2(\T)}^2.
\]
Hence we obtain
\begin{align*}
\inp{A_{\xi_1, \xi_2} u}{u}_{L^2(\T)}
     &\ge 
    \xi_1^2 a_0^2
    + \eps \xi_2^2 \left( a_0^2 +  |a_1|^2 +  |a_{-1}|^2 + \nor{w}{L^2(\T)}^2\right) \\
     & \qquad  +\left(
(\xi_1-1)^2 - \frac{1}{2(1-\eps)} + (1-\eps) \xi_2^2 -\xi_2
\right) \nor{v}{L^2(\T)}^2. 
\end{align*}
Now, let us set $\eps= \frac 18$. Then by noting $0\le \xi_1 \le \frac 1{16}$, we have
\begin{align*}
(1-\eps) \xi_2^2 - \xi_2 +
(\xi_1-1)^2 -\frac{1}{2(1-\eps)}
&= \frac 78 \left(\xi_2 - \frac 47\right)^2
+\xi_1^2 + \left( - 2\xi_1 + \frac 17 \right)
\\
& \ge \xi_1^2.
\end{align*}
Therefore, we obtain
\begin{align*}
\inp{A_{\xi_1, \xi_2} u}{u}_{L^2(\T)} 
&\ge
    \xi_1^2 a_0^2
    + \eps \xi_2^2 \left( a_0^2 +  |a_1|^2 +  |a_{-1}|^2 + \nor{w}{L^2(\T)}^2\right)
+ \xi_1^2\nor{v}{L^2(\T)}^2\\
&\ge \frac 18 |\xi|^2 \nor{u}{L^2(\T)}^2,
\end{align*}
which proves \eqref{Ec3.13}.\par
\textit{Case II.} 
Suppose that $\frac 1{16} \le \xi_1 \le \frac 12$. By Lemma \ref{L1}, combined with 
the result of Case I, we have
\[
\la_0(\xi_1,\xi_2) \ge \la_0(\frac 1{16}, \xi_2) \ge 
\frac 18 \left( \frac 1{16^2} + \xi_2^2\right)
\ge 
\frac 18 \left( \frac {\xi_1^2}{8^2} + \xi_2^2\right)
\ge \te_0 |\xi|^2
\]
for some $\te_0>0$. Hence the proof is complete.
\end{proof}
\subsection{Asymptotics near $\xi=0$}
Next, we examine the asymptotics of $\la_0(\xi)$ and $\phi_0(\xi,\cdot)$ around $\xi=0$.
Since $A_0 = -\rd_{x_1}^2$, 
we know that $\la_0(0)=0$ with $\phi_0 (0,\cdot) = \frac 1{\sqrt{2\pi}}$, which leads us to apply the Lyapunov--Schmidt argument. We claim the following.
\begin{prop}\label{P3}
There exist $\del_0>0$, $C_k>0$ for $k\in\Z_{\ge0}$ such that for any $\xi\in\T^*$ with $|\xi|_*<\del_0$, we have
\begin{gather}
\label{E3.13}
      \la_0(\xi) = |\xi_1|_*^2 + \frac 12 |\xi'|^2 + o(|\xi|_*^2),
\\
\label{E3.14}
\phi_0(\xi,x_1) = 
\begin{cases}
\frac{1}{\sqrt{2\pi}} + \frac{1}{\sqrt{2\pi}} \xi_2 \cos x_1 + R_0(\xi,x_1), & \text{if } d=2,\\
\frac{1}{\sqrt{2\pi}} + \frac{1}{\sqrt{2\pi}} \xi_2 \cos x_1 +
\frac{1}{\sqrt{2\pi}} \xi_3 \sin x_1
+ R_0(\xi,x_1), & \text{if } d=3,
\end{cases}
\end{gather}
with 
\begin{equation}\label{E3.15}
|\rd^k_{x_1} R_0 (\xi,x_1)| \le C_k |\xi|_*^2,\quad 
x_1\in\T,\quad k\ge 0.
\end{equation}
\end{prop}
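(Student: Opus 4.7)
The plan is a Lyapunov--Schmidt reduction around the explicit solution $\lambda_0(0)=0$, $\phi_0(0,x_1)=\psi_0:=(2\pi)^{-1/2}$. By the symmetry \eqref{E3.11} I may restrict to $\xi_1\in[0,\delta_0]$, so $|\xi_1|_*=\xi_1$. By the unitary equivalence \eqref{E3.105} it suffices to treat $d=2$ and then conjugate by $\boT_{\alpha_{\xi'}}$: the identity $\xi_2\cos x_1+\xi_3\sin x_1=|\xi'|\cos(x_1-\alpha_{\xi'})$ translates the 2D expansion (with $\xi_2$ replaced by $|\xi'|$) into the 3D formula \eqref{E3.14}, and $\lambda_0$ is unchanged.

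For the 2D case, decompose $A_\xi=A_0+B(\xi)$ with $B(\xi)=-2i\xi_1\partial_{x_1}+|\xi|^2-\xi_2\cos x_1$, and introduce $P=\langle\,\cdot\,,\psi_0\rangle\psi_0$, $Q=I-P$. Seek an eigenfunction of the form $\phi=\psi_0+\eta$ with $\eta\in QL^2(\mathbb{T})$ (so the normalization $\langle\phi,\psi_0\rangle=1$ fixes the phase near $\xi=0$); then $A_\xi\phi=\lambda\phi$ splits into
\begin{align*}
\lambda &= \langle B(\xi)(\psi_0+\eta),\psi_0\rangle,\\
\bigl(A_0-\lambda+QB(\xi)\bigr)\eta &= -QB(\xi)\psi_0 \quad \text{on } QL^2(\mathbb{T}).
\end{align*}
Since $A_0|_{QL^2}$ has spectrum $\{n^2\}_{n\ge 1}$, the operator on the left of the second equation is boundedly invertible for $|\xi|$ and $|\lambda|$ small, with inverse depending analytically on $(\xi,\lambda)$. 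This yields a unique analytic solution $\eta=\eta(\xi,\lambda)\in H^2(\mathbb{T})$ of size $O(|\xi|)$, and the implicit function theorem applied to the scalar equation produces an analytic branch $\lambda_0(\xi)$.

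To extract the leading coefficients, compute $QB(\xi)\psi_0=-(2\pi)^{-1/2}\xi_2\cos x_1$, and use $-\partial_{x_1}^2\cos x_1=\cos x_1$ to obtain $\eta=(2\pi)^{-1/2}\xi_2\cos x_1+O(|\xi|^2)$ in $H^2(\mathbb{T})$, which matches \eqref{E3.14} (the renormalization from $\langle\phi,\psi_0\rangle=1$ to $\|\phi_0\|_{L^2}=1$ absorbs an $O(|\xi|^2)$ correction into $R_0$). Using self-adjointness of $B(\xi)$ and $\langle\eta,\psi_0\rangle=0$,
\[
\lambda_0(\xi)=\langle B(\xi)\psi_0,\psi_0\rangle+\langle\eta,B(\xi)\psi_0\rangle=|\xi|^2-(2\pi)^{-1/2}\xi_2\langle\eta,\cos x_1\rangle+o(|\xi|^2)=\xi_1^2+\tfrac{1}{2}\xi_2^2+o(|\xi|^2),
\]
which is \eqref{E3.13}.

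The pointwise derivative bound \eqref{E3.15} follows by an elliptic bootstrap on the torus: from $A_0\eta=\lambda\eta-B(\xi)\eta-QB(\xi)\psi_0$ and the smooth coefficients of $B(\xi)$, one iteratively upgrades $\eta$ from $H^2(\mathbb{T})$ to $H^s(\mathbb{T})$ for every $s\ge 2$, preserving the $O(|\xi|^2)$ bound on $R_0:=\eta-(2\pi)^{-1/2}\xi_2\cos x_1$; the embedding $H^s(\mathbb{T})\hookrightarrow C^k(\mathbb{T})$ then delivers \eqref{E3.15}. The main technical point is maintaining $\xi$-uniform constants through this bootstrap, but this is unproblematic since $B(\xi)$ is a polynomial of degree $\le 2$ in $\xi$ with uniformly bounded smooth coefficients.
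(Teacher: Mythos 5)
Your proposal is correct and follows essentially the same route as the paper: a Lyapunov--Schmidt reduction about $\xi=0$ in the 2D case, with the 3D case obtained by conjugation via \eqref{E3.105}; the paper merely phrases the solvability step as a contraction mapping on the pair $(\lambda,V)$ (via a Neumann series) rather than invoking analytic invertibility and the implicit function theorem. The only point worth adding is the identification of the constructed $O(|\xi|^2)$ branch with the \emph{lowest} eigenvalue $\lambda_0(\xi)$, which the paper deduces from the uniform lower bound \eqref{Ea3.13} on $\lambda_n(\xi)$ for $n\ge 1$.
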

\begin{proof}
By \eqref{E3.11}, we may restrict ourselves to $\xi_1\in [0,\frac 12]$, where $|\xi_1|_* = \xi_1$ holds. We divide the proof into three steps. \par
\textit{Step 1.} 
We start with the case $d=2$. 
For $A_\xi$, we seek an eigenvalue $\tilde\la_0(\xi)$ and the associated eigenfunction $\tilde{\phi}_0(\xi)$ 
in the following form:
\begin{gather}\label{E3.16}
  A_\xi \tilde{\phi}_0 (\xi,\cdot) = \tilde{\la}_0 (\xi) \tilde{\phi}_0 (\xi,\cdot),\qquad 
\tilde\la_0(\xi) = O(|\xi|^2), \\
\label{E3.17}
\tilde{\phi}_0 (\xi,\cdot) = \psi_0 + V_\xi ,\qquad
\psi_0 = \frac 1{\sqrt{2\pi}},\qquad 
V_\xi \perp \psi_0 \text{ in } L^2(\T).
\end{gather}
We first derive an equivalent form of \eqref{E3.16} and \eqref{E3.17}.  
To begin with, 
substituting \eqref{E3.17} into \eqref{E3.16}, we have
\begin{equation}\label{E3.18}
 (|\xi|^2 - \xi_2 \cos x_1) \psi_0 
+ \left(-\rd_{x_1}^2 -2i\xi_1\rd_{x_1} + |\xi|^2 - \xi_2 \cos x_1 \right) V_\xi
= \tilde\la_0(\xi) (\psi_0 + V_\xi).
\end{equation}
Let $P_0= \inp{\cdot}{\psi_0}_{L^2(\T)} \psi_0$ be the projection onto $\psi_0$-component. 
Applying $P_0$ to \eqref{E3.18}, we have
\[
|\xi|^2 \psi_0 -\xi_2 P_0 \left( V_\xi \cos x_1  \right) = \tilde\la_0 (\xi) \psi_0,
\]
which gives
\begin{equation}\label{E3.19}
\tilde\la_0(\xi) =  |\xi|^2 - \frac{\xi_2}{\sqrt{2\pi}} \int_0^{2\pi} V_\xi(x_1) \cos x_1 dx_1.
\end{equation}
Next, we apply $P_0^\perp = I -P_0$ to \eqref{E3.18}. Then we obtain
\[
-\rd_{x_1}^2 V_\xi - \xi_2 \cos x_1 \psi_0 
+ (- 2i\xi_1 \rd_{x_1}  + |\xi|^2) V_\xi
    - \xi_2 P_0^\perp (V_\xi \cos x_1) = \tilde\la_0 (\xi) V_\xi .
\]
Noting that $-\rd_{x_1}^2$ is invertible on $P_0^\perp L^2(\T)$, 
we can rewrite this into
\begin{equation}\label{E3.20}
\left( 1 +  (-\rd_{x_1}^2)^{-1} P_0^\perp (-\tilde\la_0(\xi) + |\xi| B_\xi)\right) V_\xi =\frac 1{\sqrt{2\pi}} \xi_2 \cos x_1,
\end{equation}
where
\[
B_\xi = -2i \frac{\xi_1}{|\xi|} \rd_{x_1} + |\xi| - \frac{\xi_2}{|\xi|} \cos x_1.
\]
Here, we note that 
\[
\nor{(-\rd_{x_1}^2)^{-1} P_0^\perp}{L^2(\T)\to L^2(\T)}\le 1,\qquad 
\nor{(-\rd_{x_1}^2)^{-1} P_0^\perp \rd_{x_1}}{L^2(\T)\to L^2(\T)}\le 1.
\]
Hence, if $|\xi|$ and $\tilde\la_0(\xi)$ are sufficiently small, which will be justified a posteriori, we have
\begin{equation}\label{Ea3.20}
\nor{(-\rd_{x_1}^2)^{-1} P_0^\perp (-\tilde\la_0(\xi) + |\xi| B_\xi)}{L^2(\T)\to L^2(\T)} \le \frac 12.
\end{equation}
Then the operator in the right hand side of \eqref{E3.20} is invertible, yielding
\begin{equation}\label{E3.21}
  V_\xi =
  \frac{\xi_2}{\sqrt{2\pi}}
  \sum_{j=0}^\infty (-1)^j
  \left[(-\rd_{x_1}^2)^{-1} P_0^\perp (-\tilde\la_0(\xi) + |\xi| B_\xi)\right]^j \cos x_1.
\end{equation}
Writing \eqref{E3.19} and \eqref{E3.21} as
\begin{equation}\label{E3.22}
  \tilde\la_0(\xi) = S_\la [\tilde\la_0(\xi), V_\xi],\qquad V_\xi = S_V [\tilde\la_0(\xi), V_\xi],
\end{equation}
we claim that $S=(S_\la,S_V)$ is a contraction mapping on
\[
\boM =
\left\{
(\la, V) \in \R \times L^2(\T)\ \left|\
|\la| \le 2|\xi|^2, \ \nor{V}{L^2(\T)} \le \sqrt{2}|\xi|
\right.
\right\}.
\]
for $|\xi|\le \del_0$ with sufficiently small $\del_0>0$. First, let $(\la,V)\in\boM$. 
Then we have
\[
  S_\la [\la ,V]
  \le |\xi|^2 + \frac{|\xi_2|}{\sqrt{2\pi}} \nor{V}{L^2(\T)} \nor{\cos x_1}{L^2(\T)}
  \le |\xi|^2 +\frac 1{\sqrt{2}} |\xi_2| \nor{V}{L^2(\T)} \le 2|\xi|^2.
\]
On the other hand, choosing $\del_0>0$ sufficiently small so that 
\eqref{Ea3.20} holds true, we estimate
\begin{align*}
  S_V [\la, V]
  &\le \frac{|\xi_2|}{\sqrt{2\pi}}
  \sum_{j=0}^\infty \nor{(-\rd_{x_1}^2)^{-1} P_0^\perp (\la+|\xi|B_\xi) }{L^2(\T)}^j
  \nor{\cos x}{L^2(\T)}\\
  &\le \frac{|\xi_2|}{\sqrt{2}} \sum_{j=0}^\infty 2^{-j} =\sqrt{2} |\xi|.
\end{align*}
Hence $S$ maps $\boM$ into itself. 
Next, let $(\la, V)$, $(\tilde{\la}, \tilde{V})\in\boM$. 
Using \eqref{E3.20}, we can write
\begin{align*}
&S_V[\la,V] - S_V [\tilde\la,\tilde{V}] \\
&=
\left(1+ (-\rd_{x_1}^2)^{-1} P_0^\perp (-\la + |\xi| B_\xi)\right)^{-1} \left[ (\tilde\la-\la)
(-\rd_{x_1}^2)^{-1} P_0^\perp 
S_V[\tilde\la,\tilde{V}]
\right].
\end{align*}
Therefore,
\begin{align*}
&\nor{S_V[\la,V] - S_V [\tilde\la,\tilde{V}]}{L^2(\T)} \\
&\le |\la - \tilde{\la}|
\sum_{j=0}^\infty 2^{-j}
\nor{(-\rd_{x_1}^2)^{-1} P_0^\perp}{L^2(\T)\to L^2(\T)} \nor{S_V[\tilde\la,\tilde{V}]}{L^2(\T)} 
\le 2\sqrt{2} |\xi|  |\la - \tilde{\la}|.
\end{align*}
We also have
\begin{align*}
  \left|
  S_\la [\la ,V] - S_\la [\tilde{\la} , \tilde{V}]
  \right|
  &=
  \left| \frac{\xi_2}{\sqrt{2\pi}} \int_0^{2\pi} (V(x_1) - \tilde{V} (x_1)) \cos x_1 dx_1 \right| \\
  &\le \frac{|\xi|}{\sqrt{2}} \nor{V- \tilde{V}}{L^2(\T)}.
\end{align*}
Hence, $S$ is a contraction on $\boM$ if $\del_0$ is sufficiently small. 
Consequently, \eqref{E3.22} has a unique solution $(\la_0(\xi), V_\xi) \in \boM$.
\par
\textit{Step 2.} 
We shall conclude the statement when $d=2$. 
Suppose $|\xi|\le \del_0$, and let $(\tilde\la_0(\xi), V_\xi) \in \boM$ be the solution to \eqref{E3.22} as above. 
We first note that $\tilde\la_0(\xi)$ should coincide with $\la_0(\xi)$ if $\del_0$ is chosen further small, since $\tilde\la_0(\xi)\le 2\del_0^2$ and any other eigenvalues are larger than $2\del_0^2$ by virtue of \eqref{Ea3.13}. 
Since $\la_0(\xi) \le 2|\xi|^2$, we have
\begin{equation}\label{Ea3.22}
  \nor{(-\rd_{x_1}^2)^{-1} P_0^\perp (-\la_0(\xi) + |\xi| B_\xi)}{H^k (\T)\to H^k(\T)}
  \le C_k(|\xi| + \la_0(\xi)) \le C_k |\xi|
\end{equation}
for $k\ge 0$ with some $C_k>0$ independent of $\xi$. 
Here, \eqref{E3.21} can be rewritten as
\begin{equation}\label{E3.23}
V_\xi = \frac{\xi_2}{\sqrt{2\pi}} \cos x_1 + \tilde{R}_0(\xi,x_1)
\end{equation}
with
\begin{equation}\label{E3.24}
\tilde{R}_0(\xi,x_1) =  \frac{\xi_2}{\sqrt{2\pi}} \sum_{j=1}^\infty (-1)^j \left[
    (-\rd_{x_1}^2)^{-1} P_0^\perp (-\la_0(\xi) + |\xi| B_\xi)
    \right]^j \cos x_1. 
\end{equation}
Then from \eqref{Ea3.22}, it follows that 
\begin{equation}\label{E3.29}
  \nor{ \tilde{R}_0(\xi,\cdot)}{H^k(\T)}
  \le C_k |\xi|^2,\qquad k\ge 0.
\end{equation}
if $\del_0$ is sufficiently small. Therefore, inserting \eqref{E3.23} to \eqref{E3.19}, we obtain
\[
  \la_0(\xi) = |\xi|^2 - \frac{\xi_2}{\sqrt{2\pi}} \int_0^{2\pi}
  \frac{\xi_2}{\sqrt{2\pi}} \cos^2 x_1 dx_1 + O(|\xi|^3)
  = \xi_1^2 + \frac 12 \xi_2^2 + O(|\xi|^3),
\]
as claimed in \eqref{E3.13}. 
On the other hand, since $\phi_0(\xi,\cdot)$ is the $L^2$-normalized eigenfunction, we have 
$\phi_0 (\xi,\cdot) = \tilde{\phi}_0(\xi,\cdot) / \nor{\tilde{\phi}_0(\xi,\cdot)}{L^2(\T)}$. 
By noting that
\[
  \nor{\tilde{\phi}_0 (\xi,\cdot)}{L^2(\T)}^2 = \nor{\psi_0 + V_\xi}{L^2(\T)}^2
  = 1 + \nor{V_\xi}{L^2(\T)}^2 = 1+O(|\xi|^2),
\]
we obtain
\[
  \phi_0 (\xi, x_1) = (1+O(|\xi|^2)) \tilde{\phi}_0 (\xi, x_1)
  = \frac{1}{\sqrt{2\pi}} + \frac{1}{\sqrt{2\pi}} \xi_2 \cos x_1 + R_0 (\xi,x_1),
\]
where $R_0(\xi,\cdot)$ satisfies the same bound as \eqref{E3.29}. 
This is the desired formula as claimed in \eqref{E3.14}, while 
\eqref{E3.15} follows from the Sobolev inequality. \par
\textit{Step 3.} We shall conclude the statement when $d=3$. 
By \eqref{E3.105}, we obtain \eqref{E3.13} from the conclusion in the case $d=2$. 
Moreover, \eqref{E3.105} and 
the results of Step 2 imply
\begin{align*}
\phi_0(\xi,x_1) 
&= \frac{1}{\sqrt{2\pi}} + 
\frac{1}{\sqrt{2\pi}} |\xi'| 
\boT_{\al_{\xi'}} \cos x_1 
+ \boT_{\al_{\xi'}} R_0(\xi,x_1)\\
&= 
\frac{1}{\sqrt{2\pi}} + 
\frac{1}{\sqrt{2\pi}} \xi_2\cos x_1 
+\frac{1}{\sqrt{2\pi}} \xi_3\sin x_1 +  \boT_{\al_{\xi'}} R_0(\xi,x_1).
\end{align*}
Thus \eqref{E3.14} in the case $d=3$ follows, by rewriting the last term as $R_0(\xi,x_1)$.
\end{proof}
%
%
%
%
%
%
%
%
\section{Linear estimates}\label{S4}
In this section, we consider the corresponding linear equation of \eqref{E2.4}:
\begin{equation}\label{E3.1}
\left\{
\begin{aligned}
\rd_t v  &= (-\al + i) A v + F,\qquad & (t,x)\in (0,\infty) \times \R^d,\\
v (0,x) &= v_0 (x) , & x\in \R^d,
\end{aligned}
\right.
\end{equation}
with $v=v(t,x), F= F(t,x)\colon [0,\infty)\times \R^d \to\C$ and $v_0 = v_0(x)\colon \R^d\to \C$. 
Equivalently, \eqref{E3.1} can be rewritten as 
\begin{equation}\label{E4.2}
v(t,x) = e^{t(-\al + i)A} v_0  + \int_0^t e^{(t-s)(-\al+i)A} F(s,x) ds,
\end{equation}
where $e^{t(-\al + i)A}$ is the semigroup generated by $(-\al+i)A$. Note also that 
using $\BF$, we can represent $e^{t(-\al + i)A}$ as 
\begin{equation}\label{E4.3}
e^{t(-\al + i)A} v_0 =
\int_{\T^*} e^{i\xi\cdot x} e^{t(-\al +i)A_\xi} \BF [v_0] (\xi,x_1) d\xi.
\end{equation}
The goal of this section is to obtain time decay estimates for $v$. 
We restrict our attention to the nontrivial cases $d=2,3$; 
the case $d=1$ is standard as $A$ coincides with the Laplacian, and the required estimates are summarized later. 
Following the idea of \cites{MR2601067,MR2788923}, 
we divide \eqref{E4.2} and \eqref{E4.3} into 
high- and low-frequency parts in terms of $\T^*$, 
and obtain the estimates separately. 
For this sake, we start with introducing the operators which restrict a function onto each frequency part. 
Let $\chi\in C^\infty (\T^*)$ be a function with
\[
0\le \chi(\xi) \le 1 \text{ in } \T^*,\qquad 
\chi (\xi) =1 \text{ if } |\xi|_*<\frac{\del_0}2,\qquad 
\chi (\xi) =0 \text{ if } |\xi|_*>\del_0,
\]
where $\del_0$ is the number as in Proposition \ref{P3}. 
Let $P_n(\xi)$ be the eigenprojection of $A_\xi$ associated with the eigenvalue $\la_n(\xi)$; namely,
\[
P_n(\xi) f = \inp{f}{\phi_n(\xi,\cdot)}_{L^2(\T)} \phi_n(\xi,\cdot),\qquad f\in L^2(\T).
\]
Then, 
for $v\in L^2(\R^d)$, define
\begin{gather}\label{ea6.1}
Q_L v = 
\int_{\T^*} e^{i\xi\cdot x} \chi (\xi) P_0(\xi) \BF [v] (\xi,x_1) d\xi,
\\
\label{ea6.2}
Q_H v = (I -Q_L)v
= \int_{\T^*} e^{i\xi\cdot x} (1-\chi (\xi) P_0(\xi)) \BF [v] (\xi,x_1) d\xi.
\end{gather}
It is clear that by \eqref{E3.3}, $Q_L$, $Q_H$ are bounded operators on $L^2(\R^d)$. 
Since both $Q_L$ and $Q_H$ commute with $A$, 
the linear evolution is completely decoupled into $Q_L$ and $Q_H$ parts. 
\subsection{Preliminary lemmas}
We begin by preparing some lemmas concerning the quantitative properties of $A$.
\begin{lem}\label{L4}
For any integer $s\ge 0$, 
there exists $C = C (s)>0$ such that
\begin{equation}\label{7.1}
C^{-1} \nor{v}{H^s(\R^d)} \le 
\nor{(1+A)^{\frac s2} v}{L^2(\R^d)} \le C \nor{v}{H^s(\R^d)}
\end{equation}
for $v\in H^s(\R^d)$.
\end{lem}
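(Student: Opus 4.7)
The plan is to establish both inequalities in \eqref{7.1} simultaneously by induction on the integer $s\ge 0$ in steps of two, with the base cases $s=0$ (trivial) and $s=1$ handled directly. Two preliminary facts are key: by Kato--Rellich, together with Proposition \ref{P2} and \eqref{E3.8}, $1+A$ is a positive self-adjoint operator on $L^2(\R^d)$ with $\sigma(1+A)\subset [1,\infty)$, so $(1+A)^{s/2}$ is unambiguously defined through the functional calculus and commutes with $A$; moreover, $A$ is a uniformly elliptic operator of order two whose coefficients (smooth linear combinations of $1$, $\cos x_1$, $\sin x_1$) are bounded on $\R^d$ together with all derivatives, so $A\colon H^{s+2}(\R^d)\to H^s(\R^d)$ is bounded for every integer $s\ge 0$.

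For the base case $s=1$, a direct computation via integration by parts (the real parts $\tfrac12\int\cos x_1\,\partial_2|v|^2\,dx$ and $\tfrac12\int\sin x_1\,\partial_3|v|^2\,dx$ vanish since $\partial_j\cos x_1=\partial_j\sin x_1=0$ for $j=2,3$) yields, in the case $d=3$,
\[
\|(1+A)^{1/2}v\|_{L^2}^2 = \|v\|_{H^1}^2 - \Im\!\int_{\R^d}\bigl(\cos x_1\,\partial_2 v + \sin x_1\,\partial_3 v\bigr)\overline{v}\,dx.
\]
Bounding the remainder by Young's inequality produces $c\|v\|_{H^1}^2 \le \|(1+A)^{1/2}v\|_{L^2}^2 \le C\|v\|_{H^1}^2$; the cases $d=1,2$ are strictly simpler.

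The inductive step $s\to s+2$ rests on the elliptic regularity estimate
\[
\|v\|_{H^{s+2}(\R^d)} \le C\bigl(\|A v\|_{H^s(\R^d)} + \|v\|_{H^s(\R^d)}\bigr),
\]
which I plan to derive from the Fourier-side identity $\|v\|_{H^{s+2}}^2 \sim \|v\|_{H^s}^2+\|\Delta v\|_{H^s}^2$, the bound $\|\Delta v\|_{H^s}\le \|Av\|_{H^s}+C\|v\|_{H^{s+1}}$ coming from the fact that $A-(-\Delta)$ is a first-order operator with bounded smooth coefficients, and absorption of $\|v\|_{H^{s+1}}$ via interpolation. Given this, the upper bound follows by writing $(1+A)^{(s+2)/2}v=(1+A)^{s/2}(v+Av)$ through the functional calculus and applying the induction hypothesis to $v$ and to $Av\in H^s$; the lower bound follows from
\[
\|v\|_{H^{s+2}}\le C\bigl(\|(1+A)^{s/2}Av\|_{L^2}+\|(1+A)^{s/2}v\|_{L^2}\bigr)\le C\|(1+A)^{(s+2)/2}v\|_{L^2},
\]
the last step using $Av=(1+A)v-v$, commutativity of $A$ with $(1+A)^{s/2}$, and the spectral bound $\sigma(1+A)\subset[1,\infty)$ to absorb the lower-order piece.

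I expect the main obstacle to be a clean, quantitative formulation of the elliptic regularity estimate together with the verification that the induction hypothesis applies to $Av$; both are standard, but rely crucially on the uniformity of constants, which is guaranteed here because the coefficients of $A-(-\Delta)$ are bounded with all their derivatives, so no localization or partition of unity is needed.
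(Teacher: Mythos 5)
Your proposal is correct and follows essentially the same route as the paper: induction on $s$ from the base cases $s=0,1$, with the inductive step driven by the observation that $A-(-\Delta)$ is a first-order operator with bounded coefficients, an interpolation/absorption argument for the lower bound, and the nonnegativity of $A$ (from Proposition \ref{P2} and \eqref{E3.8}) to control the lower-order spectral piece. The paper phrases the step as "assume for $s\le 2k-1$, prove for $s=2k,2k+1$" and inlines the elliptic estimate rather than isolating it, but these are only organizational differences.
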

\begin{proof}
We provide the proof for $d=2$, while the case $d=3$ can be proved similarly. 
We proceed by induction on $s$.  
When $s=0$, \eqref{7.1} is trivial. 
When $s=1$, we first obtain
\begin{align*}
\nor{(1+A)^{\frac 12}v}{L^2}^2 
&= \inp{(1-\Del + i\cos x_1 \rd_{2} ) v}{v}_{L^2} \\
&= \nor{v}{H^1}^2 + \inp{i\cos x_1 \rd_{2} v}{v}_{L^2} 
\le C \nor{v}{H^1}^2.
\end{align*}
For the other inequality, we note that
\begin{align*}
\nor{v}{H^1}^2 
= \inp{(1-\Del) v}{v}_{L^2} 
&=\inp{(1+A)v}{v}_{L^2} - \inp{i\cos x_1\rd_{2} v}{v}_{L^2} \\
&\le  \nor{(1+A)^{\frac 12}v}{L^2}^2 + \nor{\rd_2 v}{L^2} \nor{v}{L^2}\\
&\le \nor{(1+A)^{\frac 12}v}{L^2}^2 + \frac 12 \nor{v}{H^1}^2 + C \nor{v}{L^2}^2.
\end{align*}
Hence we have
\[
\nor{v}{H^1}^2 \le C \nor{(1+A)^{\frac 12}v}{L^2}^2 + C \nor{v}{L^2}^2 \le C \nor{(1+A)^{\frac 12}v}{L^2}^2,
\]
where the last inequality follows by
\[
\nor{(1+A)^{1/2}v}{L^2}^2 
= \nor{v}{L^2}^2 + \inp{Av}{v}_{L^2}
\ge \nor{v}{L^2}^2.
\]
Thus, \eqref{7.1} holds true when $s=1$. 
We now suppose that \eqref{7.1} is true for $s\le 2k-1$ with $k\in \N$. 
Then, for $a=0,1$, it follows from the assumption that
\[
\nor{(1+A)^{k+ \frac a2} v}{L^2} \le C 
\nor{(1+A)v}{H^{2k+a-2}}
\le C
\nor{v}{H^{2k+a}}.
\]
On the other hand, the assumption, together with the Leibniz rule and the interpolation inequality, yields
\begin{align*}
\nor{v}{H^{2k+a}} \le C\nor{(1-\Del) v}{H^{2k+a-2}} 
&\le C \nor{(1+A) v}{H^{2k+a-2}} + C\nor{i\cos x_1 \rd_2 v}{H^{2k+a-2}}\\
&\le C \nor{(1+A)^{k+\frac a2} v}{L^2} + C \nor{v}{H^{2k+a-1}}\\
&\le C \nor{(1+A)^{k+\frac a2} v}{L^2} + \frac 12 \nor{v}{H^{2k+a}} + C \nor{v}{L^2},
\end{align*}
which is equivalent to
\begin{align*}
\nor{v}{H^{2k+a}} 
&\le C\nor{(1-\Del) v}{H^{2k+a-2}} \\
&\le C \nor{(1+A)^{k+\frac a2} v}{L^2} + C \nor{v}{L^2} \le C \nor{(1+A)^{k+\frac a2} v}{L^2}.
\end{align*}
Therefore, \eqref{7.1} holds true for $s=2k, 2k+1$. Hence the proof is complete.
\end{proof}
\begin{lem}\label{L3}
The following hold.\\
(i) For $s\ge 0$, there exists $C=C(s)>0$ such that for all $v\in H^s(\R^d)$, we have
\begin{equation}\label{Ea4.4}
\nor{Q_L v}{H^s(\R^d)} \le C \nor{Q_L v}{L^2(\R^d)}.
\end{equation}
(ii) There exists $C_1>0$ such that for all $v\in H^1(\R^d)$
, we have
\begin{equation}\label{Ea4.5}
\nor{Q_H v}{L^2(\R^d)} \le C_1 \nor{A^{\frac 12} Q_H v}{L^2(\R^d)}.
\end{equation}
\end{lem}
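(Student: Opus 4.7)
The plan is to handle both parts of Lemma \ref{L3} via the Bloch--Fourier decomposition: each inequality reduces to a uniform-in-$\xi$ bound between operators on $L^2(\T)$, which then integrates to the stated $L^2(\R^d)$ estimate through the isometry \eqref{E3.3}. The fiber operator $A_\xi$ is self-adjoint with a complete orthonormal eigenbasis $\{\phi_n(\xi,\cdot)\}_{n\ge 0}$ and simple eigenvalues $\{\la_n(\xi)\}_{n\ge 0}$, so functional calculus on each fiber diagonalizes both $\chi(\xi)P_0(\xi)$ and $I-\chi(\xi)P_0(\xi)$, and the spectral input from Propositions \ref{P2} and \ref{P3} is precisely tailored to close each estimate.

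For part (i), I would first invoke Lemma \ref{L4} to reduce the problem to bounding $\nor{(1+A)^{s/2}Q_Lv}{L^2(\R^d)}$ by a multiple of $\nor{Q_Lv}{L^2(\R^d)}$. Functional calculus on each fiber gives
\[
(1+A_\xi)^{s/2}\chi(\xi)P_0(\xi) = \chi(\xi)(1+\la_0(\xi))^{s/2}P_0(\xi),
\]
so the Bloch--Fourier symbol relating $(1+A)^{s/2}Q_L$ to $Q_L$ is the scalar $\chi(\xi)(1+\la_0(\xi))^{s/2}$. Since $\supp\chi\subset\{|\xi|_*\le\del_0\}$ is bounded in $\T^*$ and $\la_0(\xi)=O(|\xi|_*^2)$ on this region by Proposition \ref{P3}, this symbol is uniformly bounded by a constant $C(s)$. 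Integrating in $\xi$ via \eqref{E3.3} then yields (i).

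For part (ii), the strategy is to expand $Q_Hv$ fiberwise as
\[
(I-\chi(\xi)P_0(\xi))f = (1-\chi(\xi))P_0(\xi)f + \sum_{n\ge 1} P_n(\xi)f,
\]
and apply $A_\xi^{1/2}$, which introduces the multipliers $\la_0(\xi)^{1/2}$ and $\la_n(\xi)^{1/2}$ in the two pieces. By orthogonality of $\{\phi_n(\xi,\cdot)\}$ in $L^2(\T)$, the target inequality reduces to the pointwise-in-$\xi$ scalar estimates
\[
(1-\chi(\xi))^2 \le C_1^2(1-\chi(\xi))^2\la_0(\xi) \quad\text{and}\quad 1\le C_1^2\la_n(\xi)\ \text{for } n\ge 1.
\]
The second is immediate from \eqref{Ea3.13}. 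For the first, the point is that $1-\chi$ vanishes on $\{|\xi|_*<\del_0/2\}$, and on its support Proposition \ref{P2}'s bound \eqref{E3.12} yields $\la_0(\xi)\ge\te_0(\del_0/2)^2>0$, a uniform positive lower bound. Integrating in $\xi$ via \eqref{E3.3} then completes the proof.

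I do not foresee a substantive obstacle: the argument is essentially bookkeeping around the functional calculus for each $A_\xi$ together with the spectral estimates already established. The only point worth attention is that the cutoff $\chi$ is designed precisely so that in (i) one stays inside the region where $\la_0$ is small and hence $(1+\la_0)^{s/2}$ is uniformly bounded, whereas in (ii) the complementary factor $1-\chi$ keeps away from the zero of $\la_0$ at $\xi=0$, securing a uniform positive lower bound for $\la_0$ on its support.
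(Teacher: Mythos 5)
Your proposal is correct and follows essentially the same route as the paper: part (i) via Lemma \ref{L4} and the boundedness of the fiber symbol $(1+\la_0(\xi))^{s/2}\chi(\xi)$, and part (ii) via the fiberwise eigenfunction expansion of $Q_H$ together with the uniform lower bounds on $\la_0(\xi)$ for $|\xi|_*\ge\del_0/2$ and on $\la_n(\xi)$ for $n\ge1$ from Proposition \ref{P2}. The only cosmetic difference is that you justify the upper bound on $\la_0$ over $\supp\chi$ by the asymptotics of Proposition \ref{P3}, which is a perfectly valid (arguably cleaner) substitute for the paper's appeal to Proposition \ref{P2} at that point.
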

\begin{proof}
(i) immediately follows from \eqref{7.1} and Proposition \ref{P2}, since
\begin{align*}
\nor{(1 + A)^{\frac s2} Q_L v}{L^2(\R^d)} 
&= 
\frac{1}{(2\pi)^{d-1}} \nor{(1+\la_0(\xi))^{\frac s2} \chi(\xi)
P_0(\xi) \BF [v] (\xi,x_1)}{L^2_{\xi}L^2_{x_1}}
\\
&\le C(s) \nor{\chi(\xi)
P_0(\xi) \BF [v] (\xi,x_1)}{L^2_\xi L^2_{x_1}}\\
&\le C(s) \nor{Q_L v}{L^2(\R^d)}.
\end{align*}
Next, applying the eigenfunction expansion yields
\begin{align*}
A^{\frac 12 } Q_H v 
&= 
\int_{\T^*} e^{i\xi\cdot x} 
(1-\chi(\xi) P_0(\xi)) A_\xi^{\frac 12} \BF[v](\xi,x_1)d\xi 
\\
&=
\int_{\T^*} e^{i\xi\cdot x} (1-\chi (\xi) P_0 (\xi)) \sum_{n=0}^\infty \la_n (\xi)^{\frac 12} P_n(\xi) \BF [v] (\xi,x_1) d\xi.
\end{align*}
Hence, we have
\begin{align*}
&\nor{A^{\frac 12} Q_H v}{L^2(\R^d)}^2 \\
&=
\frac{1}{(2\pi)^{d-1}} \nor{A_\xi^{\frac 12 } (1-\chi (\xi) P_0 (\xi)) \sum_{n=0}^\infty 
P_n(\xi) \BF [v] (\xi,x_1)}{L^2_{\xi}L^2_{x_1}}^2 \\
&= 
\frac{1}{(2\pi)^{d-1}} \nor{(1-\chi (\xi) ) \la_0 (\xi)^{\frac 12} P_0(\xi) \BF [v] (\xi,x_1)}{L^2_{\xi}L^2_{x_1}}^2 \\
&\qquad + \frac{1}{(2\pi)^{d-1}}
\sum_{n=1}^\infty
\nor{ \la_n (\xi)^{\frac 12} P_n(\xi) \BF [v] (\xi,x_1)}{L^2_{\xi}L^2_{x_1}}^2 \\
&\ge 
C_1^{-2} \left[
\nor{(1-\chi (\xi) ) P_0(\xi) \BF [v] (\xi,x_1)}{L^2_{\xi}L^2_{x_1}}^2 
+
\sum_{n=1}^\infty
\nor{ P_n(\xi) \BF [v] (\xi,x_1)}{L^2_{\xi}L^2_{x_1}}^2 \right] \\
&= C_1^{-2} \nor{Q_H v}{L^2_{\xi}L^2_{x_1}}^2,
\end{align*}
where $C_1$ is a constant satisfying
\[
\la_0 (\xi) \ge \frac{(2\pi)^{d-1}}{C_1^2}  \text{ if } |\xi|_*\ge \frac{\del_0}{2}, 
\quad \text{and}\quad 
\la_n (\xi) \ge \frac{(2\pi)^{d-1}}{C_1^2} \text{ for } n\ge 1,\ \xi\in \T^*,
\]
whose existence is guaranteed by Proposition \ref{P2}.
\end{proof}
\subsection{High-frequency estimate}
For the high-frequency part, we claim the following:
\begin{prop}[High-frequency estimate]\label{P:4}
Let $v$ be a solution to \eqref{E3.1}. Then, for any integer $s\ge 1$, 
there exist $c_0>0$ and $C=C(s)>0$, independent of $\al$ and $v$, such that,
\begin{equation}\label{e7.2}
\begin{aligned}
&\nor{Q_H v(t)}{H^s(\R^d)} + \al^{\frac 12} 
\left[\int_0^t e^{-c_0 \al(t-\tau)} \nor{Q_H v(\tau)}{H^{s+1}(\R^d)}^2 d\tau\right]^{\frac 12}\\
&\qquad \le 
C e^{-\frac {c_0}2 \al t} \nor{Q_H v_0}{H^s(\R^d)} 
+ C \al^{-\frac 12}
\left[\int_0^t e^{-c_0 \al(t-\tau)} \nor{Q_H F(\tau)}{H^{s-1}(\R^d)}^2 d\tau\right]^{\frac 12}.
\end{aligned}
\end{equation}
\end{prop}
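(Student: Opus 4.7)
The plan is to run a weighted $L^2$ energy estimate on the high-frequency projection of $v$, exploiting the spectral gap (\ref{Ea4.5}) to produce both the exponential damping rate and the $H^{s+1}$ dissipation term. Apply $Q_H$ to (\ref{E3.1}); since $Q_H$, $A$, and $(1+A)^{s/2}$ all commute by functional calculus, the quantity $w := (1+A)^{s/2} Q_H v$ still lies in $Q_H L^2(\R^d)$ and solves
\begin{equation*}
\rd_t w = (-\al + i) A w + (1+A)^{s/2} Q_H F.
\end{equation*}
By Lemma~\ref{L4} we have $\nor{w}{L^2} \sim \nor{Q_H v}{H^s}$ and $\nor{(1+A)^{1/2} w}{L^2} \sim \nor{Q_H v}{H^{s+1}}$, and because $A$ is bounded below by a positive constant on $Q_H L^2$ (Proposition~\ref{P2}), $A^{1/2}$ and $(1+A)^{1/2}$ are equivalent there, so $\nor{A^{1/2} w}{L^2} \sim \nor{Q_H v}{H^{s+1}}$ as well.

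Pairing the equation for $w$ with $w$ in $L^2$ and taking real parts, the self-adjoint imaginary drift $iA$ drops out, yielding
\begin{equation*}
\frac{1}{2}\frac{d}{dt}\nor{w}{L^2}^2 + \al \nor{A^{1/2} w}{L^2}^2 = \Re\,\inp{(1+A)^{s/2} Q_H F}{w}_{L^2}.
\end{equation*}
The spectral gap (\ref{Ea4.5}), applied to $w \in Q_H L^2$, gives $\nor{w}{L^2}^2 \le C_1^2 \nor{A^{1/2} w}{L^2}^2$, so splitting
$\al \nor{A^{1/2} w}{L^2}^2 \ge \tfrac{\al}{2}\nor{A^{1/2} w}{L^2}^2 + \tfrac{\al}{2C_1^2}\nor{w}{L^2}^2$
produces both the dissipation to be retained on the left and the exponential damping with rate $c_0 \al$ for $c_0 := 1/C_1^2$.

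The source term is the step that requires the most care, since the naive pairing would only yield $\nor{Q_H F}{H^s}$ on the right; to get $\nor{Q_H F}{H^{s-1}}$ one must trade an $A^{1/2}$ out of the dissipation. Using the boundedness of $A^{-1/2}$ on $Q_H L^2$ (again by the spectral gap) together with Cauchy--Schwarz and Lemma~\ref{L4},
\begin{equation*}
|\inp{(1+A)^{s/2} Q_H F}{w}_{L^2}| \le \nor{A^{-1/2}(1+A)^{s/2} Q_H F}{L^2} \nor{A^{1/2} w}{L^2} \le C \nor{Q_H F}{H^{s-1}} \nor{A^{1/2} w}{L^2},
\end{equation*}
where we also used that $A^{-1/2}(1+A)^{1/2}$ is bounded on $Q_H L^2$. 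Young's inequality then absorbs $\nor{A^{1/2} w}{L^2}$ into the dissipation at the cost of a factor $\al^{-1}$, giving
\begin{equation*}
\frac{d}{dt}\nor{w}{L^2}^2 + \frac{\al}{2}\nor{A^{1/2} w}{L^2}^2 + c_0 \al \nor{w}{L^2}^2 \le \frac{C}{\al}\nor{Q_H F}{H^{s-1}}^2.
\end{equation*}
Multiplying by the integrating factor $e^{c_0 \al t}$, integrating over $[0,t]$, and using $\sqrt{a+b}\le \sqrt a+\sqrt b$ yields (\ref{e7.2}) with the claimed factor $e^{-\frac{c_0}{2}\al t}$. All constants remain independent of $\al$ because $\al$ enters only through the dissipation term and through Young's inequality as $\al^{-1}$, precisely matching the weights appearing in (\ref{e7.2}).
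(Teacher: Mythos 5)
Your proposal is correct and follows essentially the same route as the paper: an energy estimate for $(1+A)^{s/2}Q_Hv$, the spectral gap of Lemma \ref{L3}(ii) to convert part of the dissipation into exponential damping, Cauchy--Schwarz and Young on the source to trade a half-power of $A$ from the dissipation for the $H^{s-1}$ norm of $F$, and an integrating factor. The only cosmetic difference is that you shift the half-power via boundedness of $A^{-1/2}(1+A)^{1/2}$ on the range of $Q_H$, whereas the paper moves $(1+A)^{1/2}$ across the self-adjoint pairing directly; both are justified by the same spectral bound.
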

\begin{proof}
Since $Q_H v $ solves
\[
\rd_t Q_H v = (-\al +i) A Q_H v + Q_H F,
\]
we have
\begin{align*}
\frac 12 \frac d{dt} \nor{(1+A)^{\frac s2} Q_H v(t)}{L^2}^2
&=
\Re \inp{(1+A)^{\frac s2 } \rd_t Q_H v}{(1+A)^{\frac s2 } Q_H v }_{L^2} \\
&=
-\al \nor{A^{\frac 12} (1+A)^{\frac s2} Q_H v}{L^2}^2 \\
&\qquad + \Re \inp{(1+A)^{\frac {s-1}2 } Q_H F }{(1+A)^{\frac {s+1}2 } Q_H v}_{L^2}.
\end{align*}
Here, let $C_1$ be the constant in Lemma \ref{L3} (ii). Then for $f\in L^2$, we have
\[
\nor{(1+A)^{\frac 12} Q_H f}{L^2}^2 
= \nor{ Q_H f}{L^2}^2 + \nor{A^{\frac 12} Q_H f}{L^2}^2 \le (C_1^2 +1) \nor{A^{\frac 12} Q_H f}{L^2}^2.
\]
Therefore, letting $c_0 = (C_1^2 +1)^{-1}$, we have
\begin{align*}
&\frac 12 \frac d{dt} \left( e^{c_0 \al t} \nor{(1+A)^{\frac s2} Q_H v}{L^2}^2 \right)
+ \frac {c_0 \al}{2} e^{c_0 \al t} \nor{(1+A)^{\frac {s+1}2} Q_H v}{L^2}^2 \\
&\le c_0 \al e^{c_0 \al t} \nor{(1+A)^{\frac {s+1}2} Q_H v}{L^2}^2 +
\frac 12 e^{c_0 \al t} \frac d{dt} \nor{(1+A)^{\frac s2} Q_H v}{L^2}^2 \\
&\le \al e^{c_0 \al t} \nor{A^{\frac 12} (1+A)^{\frac s2} Q_H v}{L^2}^2  + 
\frac 12 e^{c_0 \al t} \frac d{dt} \nor{(1+A)^{\frac s2} Q_H v}{L^2}^2 \\
&= e^{c_0 \al t} \Re \inp{(1+A)^{\frac {s-1}2} Q_H F }{(1+A)^{\frac {s+1}2} Q_H v}_{L^2} \\
&\le \frac{1}{c_0 \al} e^{c_0 \al t} \nor{(1+A)^{\frac {s-1}2} Q_H F}{L^2}^2 
+ \frac{c_0 \al}{4} e^{c_0\al t} \nor{(1+A)^{\frac {s+1}2} Q_H v}{L^2}^2.
\end{align*}
Hence, it follows that
\begin{align*}
&\frac 12 \frac d{dt} \left( e^{c_0 \al t} \nor{(1+A)^{\frac s2} Q_H v}{L^2}^2 \right)
+ \frac {c_0 \al}{4} e^{c_0 \al t} \nor{(1+A)^{\frac {s+1}2} Q_H v}{L^2}^2 \\
&\le \frac{1}{c_0 \al} e^{c_0 \al t} \nor{(1+A)^{\frac {s-1}2} Q_H F}{L^2}^2 .
\end{align*}
Integrating over $t$, we obtain
\begin{align*}
&\frac 12 e^{c_0 \al t} \nor{(1+A)^{\frac s2} Q_H v(t)}{L^2}^2 
+ \frac{c_0\al}{4} \int_0^t e^{c_0 \al \tau} \nor{(1+A)^{\frac{s+1}2} Q_H v(\tau)}{L^2}^2 d\tau \\
&\qquad \le 
\frac 12 \nor{(1+A)^{\frac s2} Q_H v_0}{L^2}^2
+ \frac{1}{c_0\al} \int_0^t e^{c_0\al\tau} \nor{(1+A)^{\frac {s-1}2} Q_H F(\tau)}{L^2}^2 d\tau,
\end{align*}
which, combined with Lemma \ref{L4}, implies \eqref{e7.2}.
\end{proof}
\subsection{Low-frequency estimate}
Next we derive the estimate for 
$e^{t(-\al+i)A}Q_L$. 
The key ingredient is the precise asymptotics of $\la_0(\xi)$ and $\phi_0(\xi,\cdot)$ obtained in Proposition \ref{P3}. 
Combined with explicit representation of the integral kernel for $e^{t(-\al+i)A}Q_L$ (see Lemma \ref{Lx5}), we derive $L^1(\R^d)\to L^p(\R^d)$ estimates for $2\le p\le \infty$. 
This scheme is based on \cites{MR2601067,MR2788923}. 
\begin{lem}[Fundamental solution]\label{Lx5}
For $v_0\in L^1(\R^d)$, we have
\[
e^{t(-\al+i)A} Q_L v_0 = \int_{\R^d} G_L(t,x,y) v_0(y) dy
\]
where
\[
G_L(t,x,y)
=
\frac{1}{(2\pi)^{d-1}}
\int_{\T^*}
e^{i\xi\cdot (x-y)}\chi(\xi)
e^{t(-\al+i)\la_0(\xi)}
\phi_0(\xi,x_1) \ovl{ \phi_0 (\xi, y_1)} d\xi.
\]
\end{lem}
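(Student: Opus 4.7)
The plan is to combine the Bloch--Fourier representation \eqref{E4.3} of the semigroup with the explicit form of the rank-one projection $P_0(\xi)$, and then re-express the resulting $L^2(\T)$-inner product as an $\R^d$-integral against $v_0$. Because $Q_L$ corresponds to multiplication by $\chi(\xi) P_0(\xi)$ in the Bloch--Fourier picture and commutes with $A$, combining \eqref{E4.3} and \eqref{ea6.1} gives
\[
e^{t(-\al+i)A} Q_L v_0(x) = \int_{\T^*} e^{i\xi\cdot x}\chi(\xi)\, e^{t(-\al+i)A_\xi} P_0(\xi)\, \BF[v_0](\xi,x_1)\, d\xi.
\]
Since $A_\xi \phi_0(\xi,\cdot) = \la_0(\xi) \phi_0(\xi,\cdot)$, the semigroup acts on the range of $P_0(\xi)$ by the scalar $e^{t(-\al+i)\la_0(\xi)}$, and writing the projection explicitly yields
\[
e^{t(-\al+i)A_\xi} P_0(\xi) \BF[v_0](\xi,\cdot) = e^{t(-\al+i)\la_0(\xi)} \phi_0(\xi,\cdot) \inp{\BF[v_0](\xi,\cdot)}{\phi_0(\xi,\cdot)}_{L^2(\T)}.
\]

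The main work is to re-express the inner product as an integral of $v_0$ over $\R^d$. Substituting the definition of $\BF[v_0]$ and exchanging the $z_1$-integral with the sum over $k\in\Z$ gives
\[
\inp{\BF[v_0](\xi,\cdot)}{\phi_0(\xi,\cdot)}_{L^2(\T)} = \frac{1}{(2\pi)^d} \sum_{k\in\Z} \boF[v_0](\xi_1+k,\xi') \int_0^{2\pi} e^{ikz_1}\ovl{\phi_0(\xi,z_1)}\, dz_1.
\]
Inserting $\boF[v_0](\xi_1+k,\xi') = \int_{\R^d} e^{-i(\xi_1+k)y_1 - i\xi'\cdot y'} v_0(y)\, dy$, interchanging the order of integration, and applying the Poisson summation identity $\sum_{k\in\Z} e^{ik(z_1-y_1)} = 2\pi \sum_{m\in\Z} \del(z_1 - y_1 - 2\pi m)$ collapses the $z_1$-integral against the $2\pi$-periodic factor $\ovl{\phi_0(\xi,\cdot)}$, producing
\[
\inp{\BF[v_0](\xi,\cdot)}{\phi_0(\xi,\cdot)}_{L^2(\T)} = \frac{1}{(2\pi)^{d-1}} \int_{\R^d} e^{-i\xi\cdot y} \ovl{\phi_0(\xi,y_1)} v_0(y)\, dy.
\]

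Substituting back into the $\xi$-integral and interchanging the $\xi$- and $y$-integrations via Fubini then delivers the claimed kernel $G_L(t,x,y)$. Fubini is justified because $\chi$ has compact support in $\T^*$, $\la_0$ is bounded on $\supp\chi$ so that $|e^{t(-\al+i)\la_0(\xi)}|\le 1$, and $\phi_0$ is uniformly bounded on $\supp\chi \times \T$ by Proposition \ref{P3} together with standard elliptic regularity for the periodic eigenvalue problem; since $v_0 \in L^1(\R^d)$, the full integrand is absolutely integrable on $\T^*\times \R^d$. The most delicate step is the Poisson-summation manipulation, where one formally uses distributional identities. To make it rigorous I would first verify the identity for $v_0 \in \mcs(\R^d)$, where the rapid decay of $\boF[v_0]$ makes every interchange of sum and integral absolutely convergent, and then extend to $v_0 \in L^1(\R^d)$ by density, using the uniform boundedness of the kernel $G_L$ coming from the same compact-support considerations.
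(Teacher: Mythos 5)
Your proposal is correct and follows essentially the same route as the paper: represent $e^{t(-\al+i)A}Q_L$ via the Bloch--Fourier decomposition, use that the semigroup acts by the scalar $e^{t(-\al+i)\la_0(\xi)}$ on the range of $P_0(\xi)$, unfold the $L^2(\T)$ pairing $\inp{\BF[v_0](\xi,\cdot)}{\phi_0(\xi,\cdot)}$ into $\frac{1}{(2\pi)^{d-1}}\int_{\R^d}e^{-i\xi\cdot y}\ovl{\phi_0(\xi,y_1)}v_0(y)\,dy$, apply Fubini, and extend from $\boS(\R^d)$ to $L^1(\R^d)$ by density. The only (cosmetic) difference is that the paper performs the unfolding by expanding $\phi_0$ in its Fourier series and resumming, whereas you collapse the $z_1$-integral via the distributional Poisson summation formula; both yield the same identity and your plan to first verify it on Schwartz data makes the manipulation rigorous.
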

\begin{proof}
The proof is essentially given by Proposition 6.2 in \cite{MR2601067}. 
We provide an explicit proof here for the sake of clarity. 
We may suppose $v_0\in \boS(\R^d)$; 
for general 
$v_0\in L^1(\R^d)$, the result can be generalized via a density argument. 
First, by definition of $Q_L$, we have
\begin{align*}
e^{t(-\al+i)A} Q_L v_0
& =
\int_{\T^*} e^{i\xi\cdot x} \chi(\xi) e^{t(-\al +i)\la_0 (\xi)} P_0(\xi)  \BF [v_0] (\xi,x_1) d\xi \\
&
=
\int_{\T^*} e^{i\xi\cdot x} \chi(\xi)
e^{t(-\al +i)\la_0 (\xi)} \\
&\hspace{40pt} \cdot\left[\int_0^{2\pi} \BF [v_0] (\xi,z_1) \ovl{\phi_0 (\xi, z_1)} dz_1\right] \phi_0 (\xi,x_1)  d\xi.
\end{align*}
Here, for $k\in\Z$, we denote the $k$-th Fourier coefficient of $\phi_0(\xi,\cdot)$ by 
\[\widehat{\phi_0} (\xi,k) = \frac 1{2\pi} \int_0^{2\pi} e^{-ikz_1} \phi_0(\xi,z_1) dz_1.\] 
Then, we have
\begin{align*}
      \int_0^{2\pi} \BF [v_0] (\xi,z_1) \ovl{\phi_0 (\xi, z_1)} dz_1
       & =
      \frac 1{(2\pi)^d}
      \int_0^{2\pi} \sum_{k\in \Z} e^{ikz_1} \boF [v_0] (\xi_1 + k ,\xi')
      \ovl{\phi_0 (\xi, z_1)} dz_1                                                                       \\
       & = \frac{1}{(2\pi)^{d-1}}
      \sum_{k\in \Z} \boF [v_0] (\xi_1 + k ,\xi') \ovl{ \widehat{\phi_0} (\xi, k)}                      \\
       & = \frac{1}{(2\pi)^{d-1}}
      \sum_{k\in \Z} \int_{\R^d} e^{-i\xi\cdot y} e^{-iky_1} v_0 (y) dy \ovl{ \widehat{\phi_0} (\xi, k)} \\
       & = \frac{1}{(2\pi)^{d-1}}
      \int_{\R^d} e^{-i\xi\cdot y}
      v_0 (y) \ovl{ \phi_0 (\xi, y_1)} dy.
\end{align*}
Hence, we obtain
\begin{align*}
      & e^{t(-\al +i)A}Q_L v_0 \\
      & =
      \frac{1}{(2\pi)^{d-1}}
      \int_{\T^*} e^{i\xi\cdot x} \chi(\xi)
      e^{t(-\al+ i)\la_0(\xi)}
      \left[
        \int_{\R^d} e^{-i\xi\cdot y}
        v_0 (y) \ovl{ \phi_0 (\xi, y_1)} dy \right]
      \phi_0(\xi,x_1) d\xi \\
                     & =
                     \frac{1}{(2\pi)^{d-1}}
      \int_{\R^d}
      \int_{\T^*}
      e^{i\xi\cdot (x-y)}
      \chi(\xi)
      e^{t(-\al+ i)\la_0(\xi)}
      \phi_0(\xi,x_1) \ovl{ \phi_0 (\xi, y_1)} v_0(y) d\xi dy,
\end{align*}
which completes the proof.
\end{proof}
\begin{lem}\label{Lx6}
The following estimates hold.
\begin{gather}\label{e7.3}
\sup_{y\in \R^d} \nor{G_L(t,\cdot ,y)}{L^p_x (\R^d)}
\le C(1+\al t)^{-\frac d2\left(1-\frac 1p\right)},
\\
\label{e7.4}
\sup_{y\in \R^d} \nor{\nab_{x,y}^k G_L(t,\cdot ,y)}{L^p_x (\R^d)}
\le C_k (1+\al t)^{-\frac d2\left(1-\frac 1p\right) -\frac 12},\qquad k\ge 1.
\end{gather}
\end{lem}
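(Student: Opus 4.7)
The plan is to exploit the explicit kernel formula from Lemma \ref{Lx5} together with the quantitative input from Propositions \ref{P2} and \ref{P3}. I would first establish \eqref{e7.3} at the two extremes $p=\infty$ and $p=2$, interpolate via log-convexity for intermediate $p$, and then treat \eqref{e7.4} by a parallel argument in which each derivative contributes an extra factor $|\xi|_*$ inside the integrand. Throughout, the Gaussian weight $|e^{t(-\al+i)\la_0(\xi)}|=e^{-\al t\la_0(\xi)}\le e^{-\te_0\al t|\xi|_*^2}$, supplied by Proposition \ref{P2}, drives all the decay.

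For the pointwise ($p=\infty$) bound, I would pull the absolute value inside the integral; since $|\phi_0(\xi,\cdot)|\lesssim 1$ on $\{|\xi|_*<\del_0\}$ by \eqref{E3.14}--\eqref{E3.15}, the matter reduces to bounding $\int_{\T^*}\chi(\xi)\,e^{-\te_0\al t|\xi|_*^2}\,d\xi$, which is $O((1+\al t)^{-d/2})$ by a volume bound when $\al t\le 1$ and by the standard Gaussian scaling $\xi=(\al t)^{-1/2}\eta$ when $\al t\ge 1$. For the $L^2$ estimate, I would observe that for fixed $y$ the function $x\mapsto G_L(t,x,y)$ is, up to a $y$-dependent multiplicative constant, the Bloch--Fourier inverse of
\[
H_y(\xi,x_1) = e^{-i\xi\cdot y}\chi(\xi)\,e^{t(-\al+i)\la_0(\xi)}\,\phi_0(\xi,x_1)\,\ovl{\phi_0(\xi,y_1)},
\]
so that the isometry \eqref{E3.3}, combined with $\nor{\phi_0(\xi,\cdot)}{L^2(\T)}=1$ and $|\phi_0(\xi,y_1)|\lesssim 1$, yields
\[
\nor{G_L(t,\cdot,y)}{L^2_x(\R^d)}^2\lesssim \int_{\T^*}\chi(\xi)^2\, e^{-2\al t\la_0(\xi)}\,d\xi\lesssim (1+\al t)^{-d/2}.
\]
The log-convexity bound $\nor{g}{L^p}\le \nor{g}{L^2}^{2/p}\nor{g}{L^\infty}^{1-2/p}$ then gives \eqref{e7.3} for $p\in[2,\infty]$; the range $p\in[1,2)$ can be reached by a further Riesz--Thorin step after a uniform $L^1$-bound $\sup_y\nor{G_L(t,\cdot,y)}{L^1_x}\lesssim 1$ is established via heat-kernel comparison, using the leading asymptotics $\la_0(\xi)\sim|\xi_1|_*^2+\tfrac12|\xi'|^2$ from Proposition \ref{P3}.

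For \eqref{e7.4}, each Cartesian derivative in $x$ or $y$ acting on the integrand produces either a factor $\pm i\xi_j$ from the phase $e^{i\xi\cdot(x-y)}$ or differentiates $\phi_0(\xi,x_1)$ (or $\ovl{\phi_0(\xi,y_1)}$). For $j\ge 2$ one has $|\xi_j|\le |\xi|_*$ on $\supp\chi$, while \eqref{E3.14}--\eqref{E3.15} give $|\rd_{x_1}\phi_0(\xi,x_1)|\lesssim |\xi|_*$. Each such derivative therefore places an extra weight of order $|\xi|_*$ inside the integrand, and repeating the $L^\infty$- and $L^2$-scalings with this weight produces the additional factor $(1+\al t)^{-1/2}$; iteration handles any $k\ge 1$.

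The principal obstacle I expect is the $\rd_{x_1}$-derivative in the region $\xi_1\approx 1$ of $\T^*$: there the symbol $\xi_1\phi_0(\xi,x_1)$ arising from $\rd_{x_1}e^{i\xi\cdot(x-y)}$ is a priori only $O(1)$, not $O(|\xi|_*)$, under the trivialization of $\phi_0$ adopted in Proposition \ref{P3}. This reflects the nontrivial phase structure of Bloch eigenfunctions across the zone boundary. To handle it I would split the $\xi_1$-integration into $\xi_1\in[0,\tfrac12]$ and $\xi_1\in[\tfrac12,1)$, and on the second piece apply the reflection identity \eqref{E3.11} together with the conjugation $A_{1-\xi_1,\xi'}=e^{-ix_1}\boR A_{\xi}\boR e^{ix_1}$ to change variables back to $\xi_1\approx 0$, where $\rd_{x_1}$ now does yield a factor comparable to $|\xi|_*$. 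Once this phase bookkeeping is in place, the Gaussian scaling argument from the earlier steps closes the estimate.
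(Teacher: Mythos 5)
Your proposal matches the paper's proof in all essentials: the $p=\infty$ bound by pulling the absolute value inside and integrating the Gaussian weight $e^{-\te_0\al t|\xi|_*^2}$ with the two cases $\al t\le 1$ and $\al t>1$; the $p=2$ bound via the Bloch--Fourier isometry \eqref{E3.3} applied to $U_y$; interpolation for intermediate $p$; and, for \eqref{e7.4}, the observation that each derivative inserts either $i\xi_j$ ($j\ge 2$) or the combination $i\xi_1+\rd_{x_1}$ acting on $\phi_0$, each of which is $O(|\xi|_*)$ by Proposition \ref{P3}, so that the weighted Gaussian integral yields the extra $(1+\al t)^{-1/2}$. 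Two remarks. First, the zone-boundary issue you raise is a genuine point: with $\R/\Z$ identified with $[0,1)$, the support of $\chi$ contains $\xi_1\in(1-\del_0,1)$, where the Lyapunov--Schmidt construction of Proposition \ref{P3} does not apply directly (there $|\xi|\approx 1$) and the ground-state eigenfunction carries an extra factor $e^{-ix_1}$; the paper's bound \eqref{e7.55} is stated without comment on this, whereas your reduction via \eqref{E3.11} and the conjugation $A_{1-\xi_1,\xi'}=e^{-ix_1}\boR A_\xi\boR e^{ix_1}$ is exactly the bookkeeping needed to make $(i\xi_1+\rd_{x_1})\phi_0=O(|\xi|_*)$ correct on that piece. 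Second, your excursion into $p\in[1,2)$ is not needed (the lemma is only invoked for $2\le p\le\infty$ in Proposition \ref{P7.20}), and the asserted uniform $L^1_x$ bound by ``heat-kernel comparison'' would actually require integration by parts in $\xi$ to produce decay in $|x-y|$; as written that step is not justified, so it is best omitted.
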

\begin{proof} 
We begin with the proof of \eqref{e7.3}. 
It suffices to consider $p=2$ and $p=\infty$, since the general case follows by interpolation. 
We first observe that 
Proposition \ref{P3} implies $|\phi_0(\xi,x_1)| \le C$ uniformly in $|\xi|_*\le \del_0$ and $x_1\in\T$. Hence, \eqref{E3.12} yields
\[
    |G_L(t,x,y)| \le C\int_{\T^*}
    \chi(\xi) e^{-\al t\la_0(\xi)}
    |\phi_0 (\xi,x_1) | |\phi_0(\xi,y_1)| d\xi
    \le
    C \int_{\T^*} \chi(\xi) e^{-\te_0 \al t|\xi|_*^2} d\xi.
\]
If $t\le \al^{-1}$, then
\[
    \int_{\T^*} \chi(\xi) e^{-\te_0 \al t|\xi|_*^2} d\xi
    \le \int_{\T^*} \chi(\xi) d\xi
    \le C \le \frac{C}{(1+\al t)^{\frac d2}}.
\]
If $t> \al^{-1}$, then
\[
    \int_{\T^*} \chi(\xi) e^{-\te_0 \al t|\xi|_*^2} \le
    \int_{\T^*} e^{-\te_0 \al t|\xi|_*^2} d\xi
    \le C (\al t)^{-\frac d2} \le \frac{C}{(1+\al t)^{\frac d2}}.
\]
Thus, combining these gives \eqref{e7.3} with $p=\infty$. 
Next, we note that
\[
    G_L(t,x,y) = \BF^{-1} [U_{y}] (t,x)
\]
with
\[
    U_y(\xi,x_1)
    = \frac{1}{(2\pi)^{d-1}}
    e^{-i\xi\cdot y} \chi(\xi) e^{t(-\al +i)\la_0(\xi)} \phi_0(\xi, x_1) \ovl{\phi_0(\xi, y_1)}.
\]
Then we have
    \begin{align*}
      \nor{G_L(t,x,y)}{L^2_x(\R^d)}
       & = C \nor{ \chi(\xi) e^{t(-\al +i)\la_0(\xi)} \phi_0(\xi, x_1) \ovl{\phi_0(\xi, y_1)} }{L^2_\xi L^2_{x_1}} \\
       & \le C \nor{\chi(\xi) e^{-\te_0 \al t |\xi|_*^2}}{L^2_\xi} \nor{\phi_0(\xi,x_1)}{L^\infty_\xi L^2_{x_1}}     \\
       & \le C (1+\al t)^{-\frac d4},
    \end{align*}
which concludes \eqref{e7.3} in the case $p=2$. 
For \eqref{e7.4}, we only give a proof for 
$\nab_x^k G(t,x,y)$ with $k\in\N$, while 
the remaining cases 
can be handled in the similar way. 
In addition, we restrict the argument to the case $d=2$, while the case $d=3$ follows similarly. 
For $k_1,k_2\in\Z_{\ge 0}$ with $k=k_1+k_2\ge 1$, direct computation gives
\begin{align*}
&\rd_{x_1}^{k_1} \rd_{x_2}^{k_2} G_L(t,x,y)\\
&=
\int_{\T^*}
e^{i\xi\cdot (x-y)} \chi(\xi) e^{t(-\al +i)\la_0(\xi)} (i\xi_1 + \rd_{x_1})^{k_1} (i\xi_2)^{k_2} \phi_0(\xi,x_1)
\ovl{\phi_0(\xi,y_1)} d\xi.
\end{align*}
Here, \eqref{E3.14} yields
\begin{equation}\label{e7.5}
    \rd_{x_1}^{l} \phi_0(\xi,x_1) =
    \begin{cases}
    \frac 1{\sqrt{2\pi}} \xi_2 \cos \left(x_1 + \frac{l\pi}{2} \right) + \rd_{x_1}^{l} R_0(\xi, x_1), & d=2,\\
    \frac 1{\sqrt{2\pi}} \xi_2 \cos \left(x_1 + \frac{l\pi}{2} \right) +
    \frac 1{\sqrt{2\pi}} \xi_3 
    \sin \left(x_1 + \frac{l\pi}{2} \right)
    +
    \rd_{x_1}^{l} R_0(\xi, x_1), & d=3
    \end{cases}
\end{equation}
for $l\in \N$. 
Therefore, 
we have
\begin{equation}\label{e7.55}
    |(i\xi_1 +\rd_{x_1})^{k_1} (i\xi_2)^{k_2} \phi_0(\xi,x_1)| \le C |\xi|_*.
 \end{equation}
Hence, \eqref{e7.4} is concluded in the same way as \eqref{e7.3}, by noting that
\[
\nor{\chi(\xi) |\xi|_* e^{-\te_0 \al t|\xi|_*^2} }{L^q_\xi} \le 
C (1+\al t)^{-\frac 12 - \frac d{2q} }\qquad \text{for } q=1, 2.
\]
\end{proof}
Finally, we claim the main estimates of this subsection.
\begin{prop}[Low-frequency estimate]\label{P7.20}
For $2\le p \le\infty$ and $v_0\in L^1(\R^d)$, we have
\begin{gather}
\label{e7.6}
\nor{e^{t(-\al+i)A} Q_L v_0}{L^p(\R^d)} \le C (1+\al t)^{-\frac d2 \left(1 - \frac 1p\right)} \nor{v_0}{L^1(\R^d)},
\\
\label{e7.7}
\begin{multlined}
      \nor{\nab^k e^{t(-\al+i)A} Q_L v_0}{L^p(\R^d)}
      + \nor{e^{t(-\al+i)A} Q_L \nab^k v_0}{L^p(\R^d)}\\
      \le C_k (1+\al t)^{-\frac d2 \left(1 - \frac 1p\right) -\frac 12} \nor{v_0}{L^1(\R^d)},\qquad (k\ge 1).
\end{multlined}
\end{gather}
\end{prop}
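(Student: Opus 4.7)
The plan is to combine the kernel representation from Lemma~\ref{Lx5} with the pointwise/norm bounds on $G_L$ established in Lemma~\ref{Lx6}, and obtain the $L^1\to L^p$ mapping properties of the low-frequency semigroup by duality/interpolation.

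First I will prove \eqref{e7.6} at the two endpoints $p=2$ and $p=\infty$. For $p=\infty$, writing
\[
e^{t(-\al+i)A}Q_L v_0(x) = \int_{\R^d} G_L(t,x,y)\,v_0(y)\,dy
\]
by Lemma~\ref{Lx5}, one immediately gets the pointwise bound
\[
|e^{t(-\al+i)A}Q_L v_0(x)| \le \bigl(\sup_{x,y} |G_L(t,x,y)|\bigr)\,\nor{v_0}{L^1},
\]
and $\sup_{x,y}|G_L(t,x,y)|\lesssim (1+\al t)^{-d/2}$ is the $p=\infty$ case of \eqref{e7.3}. For $p=2$, I apply Minkowski's integral inequality to
\[
\nor{e^{t(-\al+i)A}Q_L v_0}{L^2_x} \le \int_{\R^d} \nor{G_L(t,\cdot,y)}{L^2_x}\,|v_0(y)|\,dy \le \Bigl(\sup_{y}\nor{G_L(t,\cdot,y)}{L^2_x}\Bigr)\,\nor{v_0}{L^1},
\]
and invoke the $p=2$ case of \eqref{e7.3} to get decay rate $(1+\al t)^{-d/4}$. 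Interpolation between $L^1\to L^2$ and $L^1\to L^\infty$ via Riesz--Thorin (applied with respect to the $y$-variable, or equivalently via interpolation of kernel operators) yields \eqref{e7.6} for every $2\le p\le\infty$.

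For \eqref{e7.7}, the first term is handled by differentiating under the integral:
\[
\nab^k_x\bigl(e^{t(-\al+i)A}Q_L v_0\bigr)(x) = \int_{\R^d}\nab^k_x G_L(t,x,y)\,v_0(y)\,dy,
\]
so the same Minkowski/duality argument as above, but now using \eqref{e7.4} instead of \eqref{e7.3}, produces the improved decay factor $(1+\al t)^{-1/2}$. For the second term I integrate by parts in $y$ (which is justified for $v_0\in\boS(\R^d)$ and then extended by density):
\[
e^{t(-\al+i)A}Q_L\bigl(\nab^k v_0\bigr)(x) = (-1)^k\int_{\R^d}\nab^k_y G_L(t,x,y)\,v_0(y)\,dy,
\]
and again apply \eqref{e7.4}. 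Interpolating between $p=2$ and $p=\infty$ completes the proof.

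The argument is essentially bookkeeping once Lemmas~\ref{Lx5} and~\ref{Lx6} are in hand; there is no genuine obstacle. The only point requiring a little care is the density/integration-by-parts step for $e^{t(-\al+i)A}Q_L\nab^k v_0$, where one must justify moving derivatives onto the smooth kernel $G_L$ for $v_0\in L^1$. This is standard: approximate $v_0$ by Schwartz functions, perform the integration by parts there, and pass to the limit using the already-established continuity of the low-frequency semigroup on $L^1$ (a consequence of \eqref{e7.3} at $p=\infty$ by duality, or directly from the fact that $G_L(t,\cdot,\cdot)$ is bounded).
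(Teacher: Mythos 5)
Your proposal is correct and follows essentially the same route as the paper: the kernel representation of Lemma~\ref{Lx5}, Minkowski's integral inequality with the bounds of Lemma~\ref{Lx6}, integration by parts in $y$ for the $\nabla^k v_0$ term, and a density argument. The only cosmetic difference is that you interpolate at the operator level via Riesz--Thorin between $p=2$ and $p=\infty$, whereas the paper simply uses the kernel bounds \eqref{e7.3}--\eqref{e7.4}, which Lemma~\ref{Lx6} already provides for all $2\le p\le\infty$ by interpolating the kernel norms.
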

\begin{proof}
For $e^{t(-\al+i)A} Q_L v_0$ and $\nab^k e^{t(-\al+i)A} Q_L v_0$, the H\"older inequality implies
  \begin{align*}
    \nor{e^{t(-\al+i)A} Q_L v_0}{L^p(\R^d)}
    &\le
    \int_{\R^d}
    \nor{G_L(t,\cdot,y)}{L^p_x(\R^d)} |v_0(y)| dy \\
    &\le
    C (1+\al t)^{-\frac d2 \left(1- \frac 1p\right)} \nor{v_0}{L^1(\R^d)},\\
    \nor{\nab^k e^{t(-\al+i)A} Q_L v_0}{L^p(\R^d)}
    &\le
    \int_{\R^d}
    \nor{\nab_x^k G_L(t,\cdot,y)}{L^p_x(\R^d)} |v_0(y)| dy\\
    &\le
    C (1+\al t)^{-\frac d2 \left(1- \frac 1p\right)-\frac 12} \nor{v_0}{L^1(\R^d)}.
\end{align*}
Next, if $v_0\in \boS(\R^d)$, integration by part yields
\[
e^{t(-\al+i)A} Q_L \nab^k v_0 =
\int_{\R^d} G_L(t,x,y) \nab_y^k v_0 (y) dy
=
\int_{\R^d} (-\nab_y)^k G_L(t,x,y) v_0 (y) dy.
\]
Hence the estimate of $e^{t(-\al+i)A} Q_L \nab^k v_0$ for $v_0\in \boS (\R^d)$ follows in the same manner as above. 
By a density argument, 
$v_0$ is generalized into any $L^1$-function, completing the proof.
\end{proof}
\begin{remark}
In view of \eqref{e7.5} and \eqref{e7.55}, 
one can observe that differentiating in $x_1$ will not improve the decay rate further than $\frac 12$. 
This is a different phenomenon from the standard heat equation. 
\end{remark}
\subsection{The case $d=1$}
Note that $A$ coincides with the standard Laplacian when $d=1$. Thus, the similar estimates are valid if we define the frequency cutoff via Fourier-wave numbers. 
Namely, when $d=1$, we substitute the definition of $Q_L$, $Q_H$ by
\begin{equation}\label{e1:1}
Q_L
= \boF^{-1} \mathbbm{1}_{|\xi|\le 1} \boF ,\qquad 
Q_H  = 
1-Q_L = \boF^{-1} \mathbbm{1}_{|\xi|> 1} \boF.
\end{equation}
Then for the high-frequency part, the same estimate as \eqref{e7.2} holds true. 
For the low-frequency part, the integral kernel for $e^{t(-\al+i)A}Q_L$ is represented as
\[
G_L (t,x,y) = 
\frac{1}{2\pi} \int_{\R} e^{i\xi(x-y)} 
\mathbbm{1}_{|\xi|\le 1}(\xi) e^{t(-\al+i)\xi^2} 
d\xi.
\]
Hence, the similar argument as in the proof of Lemma \ref{Lx6} and Proposition \ref{P7.20} yields 
\begin{equation}
\begin{multlined}
\nor{\rd_1^k e^{t(-\al+i)A} Q_L v_0}{L^p(\R)} + 
\nor{e^{t(-\al+i)A} Q_L \rd_1^k v_0}{L^p(\R)}
\\
\le C_k (1+\al t)^{-\frac 12\left(1-\frac 1p\right)- \frac {k}{2} }
\nor{v_0}{L^1(\R)}
\end{multlined}
\end{equation}
for $2\le p\le \infty$, $k\in \Z_{\ge0}$, and $v_0\in L^1(\R)$.
%
%
%
%
%
%
%
%
%
%
\section{Nonlinear estimate}\label{S5}
In this section, we give the estimate for \eqref{E2.4} to conclude Theorem \ref{T1}. The concerning nonlinear problem is rewritten as 
\begin{equation}\label{eb7.1}
  \left\{
  \begin{aligned}
      \rd_t u &= (-\al +i)A u + N(u), \qquad & (t,x)\in (0,T)\times \R^d, \\
      u(0,x) &= u_0(x),\qquad & x\in \R^d,
  \end{aligned}
  \right.
\end{equation}
where $T>0$ and
\[
N(u) = \al N_1(u, \nab u) + N_2(u,\nab u, \nab^2 u)
\]
as $N_1,N_2$ defined in Section \ref{S2}. 
Following the idea of \cite{MR3243362}, 
we decompose $u$ into low- and high-frequency parts, 
and then estimate each part separately. 
Here, the corresponding decomposition is carried out 
by using the operators $Q_L$, $Q_H$ defined in 
\eqref{ea6.1}, \eqref{ea6.2}. Namely, we set
\[
u = u_L + u_H,\qquad u_L = Q_L u,\qquad u_H = Q_H u.
\]
Then for $t\in [0,T_{\max})$, define
\begin{align*}
   & M (T) = M_L(T) + M_H(T),                                   \\
   & M_L(T)
  =
\sup_{t\in [0,T]} \left( (1+\al t)^{\frac d4 } \nor{u_L(t) }{L^2} +
  (1+\al t)^{\frac d4+\frac 12} \nor{\nab u_L (t)}{H^{s}} \right.\\
  &\hspace{100pt} 
  \left.+ (1+\al t)^{\frac d2} \nor{u_L(t)}{L^{\infty}}
  +
  (1+\al t)^{\frac d2+\frac 12} \nor{\nab u_L(t)}{W^{s,\infty}}
  \right), \\
   & M_H(T) =
\sup_{t\in [0,T]} \left( 
(1+\al t)^{\frac d2+1} \nor{u_H(t)}{H^s} \right.\\
&\hspace{80pt}  \left.+ \al^{\frac 12} (1+\al t)^{\frac d2+1} \left[
\int_0^t e^{-c_0 \al (t-\tau)} \nor{u_H(\tau)}{H^{s+1}}^2 d\tau
\right]^{\frac 12}\right) 
\end{align*}
where $c_0$ is the constant as in Proposition \ref{P7.20}. 
We remark that the idea of the definition of $M_H$ comes from \cite{MR4316931}. 
The key estimate is the following:
\begin{prop}[Nonlinear estimate]\label{P7.22}
Let $T>0$ be a number and $s\ge 2$ be an integer. 
Let 
$u\in C([0,T]\colon H^s(\R^d))\cap L^2(0,T\colon H^{s+1}(\R^d))$ 
be a solution to \eqref{eb7.1} with $u_0\in L^1(\R^d)\cap H^s(\R^d)$. 
Suppose that $\sup_{t\in [0,T],x\in \R^d} |u(t,x)|\le \frac 12$. 
Then 
there exist a constant $C_0>0$ and an integer $K\ge 2$ such that
\begin{equation}\label{eb7.11}
    M(t) \le C_0 \nor{u_0}{L^1\cap H^{s}} + C_0 (1+\al^{-1}) (M(t)^2+M(t)^K) 
\end{equation}
for all $t\in[0,T]$.
\end{prop}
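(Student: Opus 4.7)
The plan is to use Duhamel's formula combined with the frequency decomposition of Section~\ref{S4}. Writing
\[
u(t) = e^{t(-\al+i)A} u_0 + \int_0^t e^{(t-\tau)(-\al+i)A} N(u(\tau))\,d\tau,
\]
I will apply $Q_L$ and $Q_H$, estimate $u_L = Q_L u$ via Proposition~\ref{P7.20} and $u_H = Q_H u$ via Proposition~\ref{P:4}. A preparatory structural observation is that although $N$ formally contains two second-order terms, namely $iu\,\Del z$ and $-iz\,\Del u$ inside $N_2$, they combine in the divergence form
\[
iu\,\Del z - iz\,\Del u \;=\; i\,\nab\cdot\bigl(u\,\nab z - z\,\nab u\bigr).
\]
I will therefore write $N(u) = \al N_1(u,\nab u) + \widetilde N_2(u,\nab u) + \nab\cdot \bN_2(u,\nab u)$, where every piece depends only on $u$ and $\nab u$ (and on $z,\nab z$, which are controlled pointwise through~\eqref{E2.25}) and each is at least quadratic in $u$.

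For the low-frequency part $u_L$, Proposition~\ref{P7.20} bounds the Duhamel integrals by $L^1$-norms of the nonlinearity: the non-divergence pieces $\al N_1$ and $\widetilde N_2$ use~\eqref{e7.6}, while the divergence piece $\nab\cdot \bN_2$ uses~\eqref{e7.7} for the extra gain $(1+\al(t-\tau))^{-1/2}$. The $L^1$-norms are closed by H\"older, e.g.\ $\nor{u\nab u}{L^1}\le \nor{u}{L^2}\nor{\nab u}{L^2}$ and $\nor{z\nab u}{L^1}\lesssim \nor{u}{L^2}^2\nor{\nab u}{L^\infty}$. Substituting the weights encoded in $M_L$ and evaluating the standard convolution $\int (1+\al(t-\tau))^{-a}(1+\al\tau)^{-b}d\tau$ give each Duhamel contribution a factor $\al^{-1}$ from the time integration. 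The explicit $\al$ preceding $N_1$ cancels its $\al^{-1}$, while the $N_2$-terms retain it; this is the source of the $(1+\al^{-1})$ factor in~\eqref{eb7.11}.

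For the high-frequency part $u_H$, Proposition~\ref{P:4} reduces the problem to bounding the exponentially-weighted $L^2_t$-norm of $\nor{Q_H N(u)}{H^{s-1}}$. Tame product estimates of the form $\nor{fg}{H^{s-1}}\lesssim \nor{f}{L^\infty}\nor{g}{H^{s-1}}+\nor{f}{H^{s-1}}\nor{g}{L^\infty}$ (valid since $s\ge 2$), together with the composition bound $\nor{z}{H^s}\lesssim_s P(\nor{u}{L^\infty})\nor{u}{H^s}$ obtained from the Taylor expansion of $z=-1+\sqrt{1-|u|^2}$ (convergent since $|u|\le 1/2$), reduce each term of $N$ to a product of two factors each controlled by $M=M_L+M_H$. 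The worst terms $z\,\Del u$ and $u\,\Del z$ produce $\nor{z}{L^\infty}\nor{u}{H^{s+1}}$, in which the $\nor{u}{H^{s+1}}$-factor (not controlled pointwise in time) is absorbed by splitting $e^{-c_0\al(t-\tau)}=e^{-c_0\al(t-\tau)/2}\cdot e^{-c_0\al(t-\tau)/2}$ and pairing one half with the scalar coefficient and the other with the $L^2_tH^{s+1}$-control built into $M_H$. An elementary bound $\sup_\tau[(1+\al\tau)^{-\beta}e^{-c_0\al(t-\tau)/2}]\lesssim (1+\al t)^{-\beta}$ then closes this part, giving the expected decay $(1+\al t)^{-d/2-1}$ after multiplication by the $M_H$-weight.

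The main obstacle is precisely the quasilinear-looking second-order term $z\,\Del u$. Its low-frequency part is only manageable through the divergence identity above, which turns it into a first-order object compatible with~\eqref{e7.7}. Its high-frequency part is only manageable through the parabolic dissipation of Proposition~\ref{P:4}, which supplies both the $\al^{-1/2}$ prefactor and access to the weighted $L^2_t H^{s+1}$-norm built into the very definition of $M_H$. The integer $K\ge 2$ in~\eqref{eb7.11} will arise from the convergent expansions of $z$ and $\nab z$ in powers of $|u|$, together with the iterated Moser-type estimates when one applies them to products involving $\nor{u}{L^\infty}^j$. Assembling the bounds for $M_L$ and $M_H$ term by term with the correct weights yields~\eqref{eb7.11}.
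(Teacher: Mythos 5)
Your proposal is correct and follows the same architecture as the paper's proof: Duhamel formula, the $Q_L$/$Q_H$ splitting, Proposition~\ref{P7.20} with $L^1$ control of the nonlinearity for the low frequencies, Proposition~\ref{P:4} with $H^{s-1}$ control for the high frequencies, and absorption of the $\nor{u_H}{H^{s+1}}$ factors generated by $z\Del u$ and $u\Del z$ into the weighted $L^2_tH^{s+1}$ component built into $M_H$. The one place where you genuinely deviate is the low-frequency treatment of the second-order terms. The paper does not use the divergence identity $iu\Del z - iz\Del u = i\nab\cdot(u\nab z - z\nab u)$: it estimates $\nor{z\Del u}{L^1}\lesssim \nor{u}{L^2}\nor{\Del u}{L^2}$ directly (via $|z|\lesssim|u|^2$), the point being that $\nor{\Del u_L}{L^2}$ and $\nor{\Del u_H}{L^2}$ are already encoded in $M$ with decay rates $(1+\al t)^{-\frac d4-\frac 12}$ and $(1+\al t)^{-\frac d2-1}$ respectively (this is exactly why $M_L$ carries $\nor{\nab u_L}{H^s}$ and $M_H$ carries $\nor{u_H}{H^s}$ with $s\ge 2$), so the product already decays like $(1+\al t)^{-\frac d2-\frac 12}$, which is all the convolution against the kernel of \eqref{e7.6} requires. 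Your divergence-form route also closes --- the resulting $\bN_2=u\nab z-z\nab u$ is cubic in $u$ and first order, and \eqref{e7.7} supplies the extra $(1+\al(t-\tau))^{-\frac 12}$ --- so it is a valid alternative, just not needed; what it buys is that you never have to place $\Del u$ in $L^2$ at low frequency, at the cost of invoking the derivative estimate \eqref{e7.7} for the inhomogeneous term. One detail to be careful about when writing the high-frequency part in full: for $s=2$ the cubic first-order terms such as $|\nab u|^2u$ measured in $H^{s-1}=H^1$ also produce a factor $\nor{\nab u}{L^\infty}$ whose high-frequency piece must likewise be absorbed through $\nor{u_H}{H^{s+1}}$ (since $\nor{u_H}{H^2}$ does not control $\nor{\nab u_H}{L^\infty}$ for $d=3$); your blanket appeal to tame product estimates glosses over this borderline case, but the absorption mechanism you already describe covers it.
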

We first show that Proposition \ref{P7.22} implies Theorem \ref{T1}.
\begin{proof}[Proof of Theorem \ref{T1}]
The proof is based on the bootstrap argument. 
Let $\eps_0>0$ be a small number determined later, and let $\bom_0\in L^1\cap H^s$ with $\nor{\bom_0}{L^1\cap H^s}<\eps_0$. 
Also let $\bom \in C([0,T_{\max})\colon H^s (\R^d))\cap L^2_{\loc}(0,T_{\max}\colon H^{s+1}(\R^d))$ 
be the unique solution to \eqref{E1.7} as in Proposition \ref{P1}. 
We set $u=u(t,x)$ to be the associated function to $\bom(t,x)$ defined via \eqref{E2.1}, and write $u_0(x)=u(0,x)$.  
Then, by \eqref{E2.275} and \eqref{e2:2}, there exists $C_1>0$ such that $\nor{u_0}{L^1\cap H^s}\le C_1 \nor{\bom_0}{L^1\cap H^s} \le C_1\eps_0$. 
Now, let $R>0$ be a number satisfying 
$C_1R>4$. 
Then, we set
\[
T^* = \sup \{ T\in [0,T_{\max}) \ |\ 
\sup_{t\in [0,T],x\in \R^d} |u(t,x)|\le \frac 12,\ 
M(T) \le R C_0 C_1 \eps_0\}, 
\]
where $C_0$ is the constant as in Proposition \ref{P7.22}. 
We first observe that $T_*>0$ holds if we choose $R$ sufficiently large. 
By the regularity conditions of $\bom$, we have $\nor{\bom}{L^\infty_TH^s_x} + \nor{\bom}{L^2_TH^{s+1}_x} \lesssim \nor{\bom_0}{H^s}=\eps_0$ for $T\ll 1$, 
which particularly guarantees $\nor{u}{L^\infty_TL^\infty_x}\le \frac 12$ by the Sobolev embedding. 
Using Lemmas \ref{L4} and \ref{L3}, together with \eqref{e2:2}, we obtain
\[
M(T)\lesssim \nor{u}{L^\infty_TH^s_x} + 
\nor{u}{L^2_T H^{s+1}_x}
\lesssim \nor{\bom}{L^\infty_TH^s_x} + \nor{\bom}{L^2_TH^{s+1}_x}
\lesssim\eps_0
\]
for $T\ll 1$. 
Therefore, if $R$ is sufficiently large, we have $M(T)\le R C_0 C_2 \eps_0$ for $T\ll 1$, which implies $T_*>0$. \par
Now we show that $T_{\max}=\infty$. 
By virtue of \eqref{E2.3} and \eqref{E1.8}, 
it suffices to show $T^*=T_{\max}$. 
First, since $s\ge 2$, the Sobolev embedding implies $u\in C_tL^\infty_x$. 
Moreover, we have
\begin{align*}
\nor{u(t)}{L^\infty} 
\le \nor{u_L(t)}{L^\infty} + \nor{u_H(t)}{L^\infty}
&\lesssim (\nor{u_L(t)}{L^\infty} + \nor{u_H(t)}{H^s}) \\
&\lesssim (1+\al t)^{-\frac d2} M(t) \lesssim \eps_0 
\end{align*}
for $t\in [0,T^*)$. 
Hence, if $\eps_0$ is sufficiently small, 
we obtain $\nor{u}{L^\infty_{T^*}L^\infty_x}\le \frac 14$. 
Next, by the argument in Section \ref{S2}, it follows that $u$ solves 
\eqref{eb7.1}, 
and thus we may apply Proposition \ref{P7.22}. Therefore, for $t\in [0,T^*)$, we have
\[
M(t) \le 
C_0 \nor{u_0}{L^1\cap  H^{s}} + C_2 (1+\al^{-1}) \eps_0 M(t)
\]
with a constant $C_2>0$ independent of $\eps$. 
Taking $\eps_0 < (2C_2 (1+\al^{-1}))^{-1}$, we obtain
\begin{equation}\label{Ex5.3}
M(t) \le 
2 C_0 \nor{u_0}{L^1\cap  H^{s}} \le 
\frac 12 R C_0 C_1\eps_0
\end{equation}
for all $t\in [0,T^*)$. 
Therefore, the bootstrap argument yields $T^*=T_{\max}$, concluding $T_{\max}=\infty$.\par
Finally, by the Sobolev embedding, we have
\begin{align*}
&(1+\al t)^{\frac d4} \nor{u(t)}{H^s} + 
(1+\al t)^{\frac d2} \nor{u(t)}{W^{\lfloor s-\frac {d+1}2 \rfloor,\infty}} \\
&\lesssim 
(1+\al t)^{\frac d4} \nor{u_L(t)}{H^s} 
+(1+\al t)^{\frac d2} \nor{u_L(t)}{W^{s,\infty}} 
+(1+\al t)^{\frac d2+1} \nor{u_H(t)}{H^s}  \\
&\lesssim  M(t)
\end{align*}
for $t>0$. 
Therefore, 
by \eqref{Ex5.3} and \eqref{E2.275}--\eqref{e2:2}, we obtain
\begin{align*}
&(1+\al t)^{\frac d4} \nor{\bom (t)}{H^s} 
+ (1+\al t)^{\frac d2} \nor{\bom (t)}{W^{\lfloor s-\frac {d+1}2 \rfloor,\infty}}\\
&\lesssim  
(1+\al t)^{\frac d4} (\nor{u(t)}{H^s} +\nor{u(t)}{H^s}^s)\\
&\qquad + 
(1+\al t)^{\frac d2} (\nor{u(t)}{W^{\lfloor s-\frac {d+1}2 \rfloor,\infty}}+ \nor{u(t)}{W^{\lfloor s-\frac {d+1}2 \rfloor,\infty}}^s)\\
&\lesssim  M(t) + M(t)^s \\
&\lesssim  \nor{u_0}{L^1\cap H^s} + 
\nor{u_0}{L^1\cap H^s}^s  
\lesssim \nor{\bom_0}{L^1\cap H^s} + \nor{\bom_0}{L^1\cap H^s}^s
\lesssim \nor{\bom_0}{L^1\cap H^s}
\end{align*}
for $t>0$, which concludes \eqref{E1.9}.
\end{proof}
\subsection{Low-frequency bound}
We begin by estimating $M_L(t)$. 
We take $Q_L$ on both sides of \eqref{eb7.1}, then 
\[
u_L(t) = e^{t(-\al+i)A} Q_L u_0 + \int_0^t e^{(t-\tau)(-\al+i)A} Q_L N(u(\tau)) d\tau.
\]
Applying Proposition \ref{P7.20}, we obtain
\begin{multline*}
(1+\al t)^{\frac d4} \nor{u_L}{L^2} 
+
(1+\al t)^{\frac d4+\frac 12} \nor{\nab u_L}{H^{s}} \\
+
(1+\al t)^{\frac d2} \nor{u_L}{L^{\infty}}
+
(1+\al t)^{\frac d2+\frac 12} \nor{\nab u_L}{W^{s,\infty}}\\
\le 
C \nor{u_0}{L^1}
+
C\sum_{a=\frac d4,\frac d4+\frac 12,\frac d2,\frac d2+\frac 12} 
(1+\al t)^{a} \int_0^t (1+\al(t-\tau))^{-a} 
\nor{N(u(\tau))}{L^1} d\tau.
\end{multline*}
Here, we claim that
\begin{equation}\label{E5.3}
\nor{N(u (\tau)}{L^1} \le C (1+\al) (1+\al \tau)^{-\frac d2-\frac 12} M(\tau)^2,\qquad \tau\in [0,T].
\end{equation}
Suppose that \eqref{E5.3} is true. Then, we have
\begin{align*}
&M(t) 
\le C\nor{u_0}{L^1} \\
&\quad + C (1+\al) M(t)^2 
\sum_{a=\frac d4,\frac d4+\frac 12,\frac d2,\frac d2+\frac 12}
(1+\al t)^{a}
\int_0^t (1+\al (t-\tau))^{-a} (1+\al \tau)^{-\frac d2 -\frac12} d\tau.
\end{align*}
For $a=\frac d4$, $\frac d4 +\frac 12$, $\frac d2$ or $\frac d2 +\frac 12$, direct computation yields
\begin{align*}
   & \int_0^t (1+\al (t-\tau))^{-a} (1+\al \tau)^{-\frac d2-\frac 12} ds            \\
   & = \int_0^{\frac t2} (1+\al (t-\tau))^{-a} (1+\al \tau)^{-\frac d2-\frac 12} d\tau
  + \int_{\frac t2}^{t} (1+\al (t-\tau))^{-a} (1+\al \tau)^{-\frac d2-\frac 12} d\tau  \\
   & \le C (1+\al t)^{-a} \int_{0}^{\frac t2} (1+\al \tau)^{-\frac d2-\frac 12} d\tau 
+ C (1+\al t)^{-\frac d2-\frac 12} \int_{\frac t2}^{t} (1+\al (t-\tau))^{-a}  d\tau \\
&\le C \al^{-1} (1+\al t)^{-\min \{ a,\frac d2+\frac 12 \}}
= C\al^{-1}(1+\al t)^{-a}.
\end{align*}
Hence, we obtain
\begin{equation}\label{eb7.12}
M_L(t) \le C \nor{u_0}{L^1 \cap H^s} + C (1+\al^{-1}) M(t)^2,
\end{equation}
which is exactly the desired estimate for $M_L$.\par
Now we prove \eqref{E5.3}. 
By the assumption $|u|\le \frac 12$ and \eqref{E2.25}, 
we have
\begin{align*}
&\nor{N_1(u,\nab u)}{L^1} \\
&\lesssim 
\nor{u \nab u}{L^1} + \nor{ |\nab u|^2}{L^1} \\
&\le 
\nor{u}{L^2} \nor{\nab u}{L^2} + \nor{ \nab u}{L^2}^2\\ 
&\le
\left( \nor{u_L}{L^2} + \nor{u_H}{L^2} \right) 
\left( \nor{\nab u_L}{L^2} + \nor{\nab u_H}{L^2} \right) 
+
\left( \nor{\nab u_L}{L^2} + \nor{\nab u_H}{L^2} \right)^2 \\
&\lesssim 
\left( (1+\al t)^{-\frac d4} M_L(t) + (1+\al t)^{-\frac d2 -1} M_H(t) \right) \\
&\hspace{60pt} \cdot \left( (1+\al t)^{-\frac d4 - \frac 12} M_L(t) + (1+\al t)^{-\frac d2 -1} M_H(t) \right)\\
&\qquad + 
\left( (1+\al t)^{-\frac d4 -\frac 12} M_L(t) + (1+\al t)^{-\frac d2 -1} M_H(t) \right)^2\\
&\lesssim  
(1+\al t)^{-\frac d2-\frac 12} M(t)^2 ,
\end{align*}
where the implicit constants are independent of $t$ and $\al$. 
For $N_2$, the terms involving up to first-order derivatives are controlled in the same way as above. 
By noting \eqref{e2:3}, 
we estimate the second-order derivative terms as
\begin{align*}
\nor{z\Del u}{L^1} &\lesssim 
\nor{u}{L^2} \nor{\Del u}{L^2} \\
&\le \left( \nor{u_L}{L^2} + \nor{u_H}{L^2} \right) \left( \nor{\Del u_L}{L^2} + \nor{\Del u_H}{L^2} \right) \\
&\lesssim
\left( (1+\al t)^{-\frac d4} M_L(t) + (1+\al t)^{-\frac d2 -1} M_H(t) \right) \\
&\hspace{60pt}
\cdot \left( (1+\al t)^{-\frac d4-\frac 12} M_L(t) + (1+\al t)^{-\frac d2 -1} M_H(t) \right)\\
&\lesssim (1+\al t)^{-\frac d2-\frac 12} M(t)^2,\\
\nor{u\Del z}{L^1} 
&\lesssim 
\nor{u}{L^2} \nor{\nab^2 u}{L^2} + \nor{\nab u}{L^2}^2
\lesssim (1+\al t)^{-\frac d2-\frac 12} M(t)^2.
\end{align*}
Hence \eqref{E5.3} follows.
\subsection{High-frequency bound}
Next, we move on to the estimate of $M_H(t)$. 
Applying $Q_H$ on both sides of \eqref{eb7.1}, we have
\[
u_H(t) = e^{t(-\al+i)A} Q_H u_0 + \int_0^t e^{(t-\tau)(-\al+i)A} Q_H N(u(\tau)) d\tau.
\]
Hence, \eqref{e7.2} yields
\begin{align*}
   & \nor{u_H(t)}{H^{s}} +\al^{\frac 12} \left[ \int_0^t e^{-c_0 \al (t-\tau)} \nor{u_H(\tau)}{H^{s+1}}^2 d\tau \right]^{\frac 12} \\
   & \le
C e^{-\frac{c_0}{2} \al t}  \nor{ Q_H u_0}{H^s} 
+ C \al^{-\frac 12} \left[
\int_0^t e^{-c_0 \al (t-\tau)} \nor{Q_H N(u(\tau)) d\tau}{H^{s-1}}^2 d\tau
\right]^{\frac 12} .
\end{align*}
Now we claim the following:
\begin{lem}\label{L7.12}
Let $u$ be a solution satisfying the condition in Proposition \ref{P7.22}. 
Then, there exist a number $C>0$ and an integer $K\ge 2$, both of which are independent of $\al$, such that
\begin{equation}\label{Ex5.45}
\begin{aligned}
\nor{Q_H N(u(\tau))}{H^{s-1}} 
&\le C (1+\al) (1+\al \tau)^{-\frac d2-1} (M(\tau)^2 + M(\tau)^K) \\
&\qquad + C (1+\al) (M(\tau) + M(\tau)^{K-1}) \nor{u_H(\tau)}{H^{s+1}}
\end{aligned}
\end{equation}
for $\tau\in [0,T]$.
\end{lem}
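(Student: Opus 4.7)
The plan is to reduce the task to a direct $H^{s-1}$-bound of $N(u)$ itself. Since $Q_H = I - Q_L$ and $Q_L$ is bounded on $H^{s-1}$ (combine the $L^2$-boundedness of $Q_L$ with Lemma~\ref{L3}(i)), we have $\nor{Q_H N(u)}{H^{s-1}} \lesssim \nor{N(u)}{H^{s-1}}$, and so the high-frequency projector may be dropped. I would then exploit that under $|u|\le \frac12$ the profile $z = -1 + \sqrt{1-|u|^2}$ is a smooth function of $u$ vanishing to second order at $u=0$, so that Moser's composition estimate supplies tame bounds $\nor{z}{H^k} \lesssim \nor{u}{L^\infty}\nor{u}{H^k}$ and analogous $W^{k,\infty}$-bounds for $0\le k\le s+1$. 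Inspecting $N_1$ and $N_2$, each summand of $N(u)$ can be written schematically as $c(x_1)\,P(u)\,\partial^{\beta}u\,\partial^{\gamma}u$ with $|\beta|+|\gamma|\le 2$, $c\in\{1,\cos x_1,\sin x_1\}$, and $P(u)\in\{u,z(u)\}$; the crucial point is that whenever $P=z$, one gains an extra factor of $\nor{u}{L^\infty}$, so that every term is at least quadratic in $u$.

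The core step is to apply the Kato--Ponce tame product estimate
\[
\nor{fg}{H^{s-1}} \lesssim \nor{f}{L^\infty}\nor{g}{H^{s-1}} + \nor{f}{H^{s-1}}\nor{g}{L^\infty}
\]
to each schematic term, combined with the composition estimate above and (for the second-order-derivative block, see below) Sobolev interpolation. This produces an inequality of the schematic form
\[
\nor{N(u)}{H^{s-1}} \lesssim (1+\al)\bigl(1+\nor{u}{L^\infty}^{K-2}\bigr)\bigl(\nor{u}{L^\infty}+\nor{\nab u}{L^\infty}\bigr)\nor{u}{H^{s+1}}
\]
for some integer $K\ge 2$, the factor $1+\al$ recording the prefactor $\al N_1$ in $N$.

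Finally, I would split $u=u_L+u_H$ in each factor and insert the decay rates encoded in $M_L,M_H$. Using $s\ge 2$ and the Sobolev embeddings $H^s\hookrightarrow L^\infty$, $H^{s+1}\hookrightarrow W^{1,\infty}$ in $d\le 3$, one gets
\[
\nor{u_L}{W^{1,\infty}}\lesssim (1+\al\tau)^{-\frac d2}M_L(\tau),\quad \nor{u_L}{H^{s+1}}\lesssim (1+\al\tau)^{-\frac d4-\frac12}M_L(\tau),
\]
\[
\nor{u_H}{W^{1,\infty}}\lesssim \nor{u_H}{H^s}\lesssim (1+\al\tau)^{-\frac d2-1}M_H(\tau),
\]
while leaving $\nor{u_H}{H^{s+1}}$ unexpanded. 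Contributions in which only $u_L$-factors appear are bounded by $(1+\al)(1+\al\tau)^{-\frac d2-1}(M^2+M^K)$ (higher powers of $M$ absorbing the factors $\nor{u}{L^\infty}^{K-2}$), giving the first summand of \eqref{Ex5.45}; any contribution that places $u_H$ in the top-order $H^{s+1}$-slot yields the second summand $(1+\al)(M+M^{K-1})\nor{u_H}{H^{s+1}}$, while $u_H$-factors appearing only in lower-order $L^\infty$-slots come with the extra $(1+\al\tau)^{-\frac d2-1}$-decay and are absorbed.

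The main obstacle is the second-order-derivative subblock $z\cdot\Del u$ and $u\cdot\Del z$, for which a naive $L^\infty$-bound on $\Del u$ is unavailable in low regularity, e.g.\ $d=3$ and $s=2$. I plan to treat these by Leibniz expansion followed by Sobolev interpolation: distribute the $s-1$ derivatives, then place each intermediate-order factor $\nab^a u$ and $\nab^b z$ in suitable $L^p$-spaces via $H^k\hookrightarrow L^p$ so that the product lies in $L^2$. This is precisely the step that forces one factor to absorb the full $H^{s+1}$-norm of $u$; after the $u_L/u_H$ split, that factor becomes $\nor{u_H}{H^{s+1}}$, explaining why the second summand in \eqref{Ex5.45} cannot be dispensed with.
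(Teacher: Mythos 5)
Your strategy is essentially the paper's: drop $Q_H$ via the $H^{s-1}$-boundedness of $Q_L$ (Lemma \ref{L3}(i)), use that $z$ and all its derivatives are at least quadratic in $u$, run tame product/Leibniz estimates on $N$, split $u=u_L+u_H$, and let the top-order slot of the second-derivative terms produce the $\nor{u_H}{H^{s+1}}$ summand. One displayed claim is wrong, though, and it matters exactly in the borderline case: $\nor{u_H}{W^{1,\infty}}\lesssim\nor{u_H}{H^s}$ fails for $s=2$ when $d=2,3$, since $H^2(\R^d)\not\hookrightarrow W^{1,\infty}(\R^d)$ there. Consequently a factor $\nor{\nab u_H}{L^\infty}$ sitting in what you call a ``lower-order $L^\infty$-slot'' cannot be absorbed into the first summand with the decay $(1+\al\tau)^{-\frac d2-1}M_H(\tau)$ as you assert; it must instead be bounded by $\nor{\nab u_H}{H^{2}}\le\nor{u_H}{H^{s+1}}$ (using $H^2\subset L^\infty$ for $d\le 3$) and routed into the second summand of \eqref{Ex5.45} --- this is precisely how the paper's separate $s=2$ treatment of $|\nab u|^2u$ proceeds, writing $\nor{\nab u}{L^\infty}\lesssim\nor{\nab u_L}{L^\infty}+\nor{\nab u_H}{H^2}$. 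Your final paragraph has the right mechanism for the genuinely second-order terms $z\Del u$ and $u\Del z$, but the same rerouting is needed for the first-order $L^\infty$ factors at $s=2$; with that correction your bookkeeping closes, since the target estimate is linear in $\nor{u_H}{H^{s+1}}$ and only one factor per product ever needs the top norm. (A cosmetic point: $\nor{u_L}{H^{s+1}}$ only decays like $(1+\al\tau)^{-\frac d4}M_L(\tau)$, not $(1+\al\tau)^{-\frac d4-\frac12}M_L(\tau)$, because the extra $\frac12$ in the definition of $M_L$ is attached to $\nab u_L$ only; this does not affect the outcome.)
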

Suppose that the claim is true. Then, we have 
\begin{align*}
& (1+\al t)^{\frac d2+1} \nor{u_H(\tau)}{H^s} 
+\al^{\frac 12}  (1+\al t)^{\frac d2+1} \left[ \int_0^t e^{-c_0 \al (t-\tau)} \nor{u_H(\tau)}{H^{s+1}}^2 d\tau \right]^{\frac 12} \\
 &\lesssim e^{-\frac {c_0}2 \al t}  (1+\al t)^{\frac d2+1} \nor{u_0}{H^s} \\
&\quad +  (1+\al) \al^{-\frac 12}
 (1+\al t)^{\frac d2+1} (M(t)^2 +M(t)^K) 
\left[\int_0^t e^{-c_0 \al (t-\tau)} (1+\al \tau)^{-d-2}  d\tau\right]^{\frac 12} \\
&\quad 
+ (1+\al) \al^{-\frac 12}  (1+\al t)^{\frac d2+1}
 (M(t) +M(t)^{K-1}) 
\left[\int_0^t e^{-c_0 \al (t-\tau)} \nor{u_H(\tau)}{H^{s+1}}^2  d\tau\right]^{\frac 12}\\
&\lesssim 
e^{-\frac {c_0}2 \al t} (1+\al t)^{\frac d2+1} \nor{u_0}{H^s} \\
&\quad +  (1+\al^{-1}) (M(t)^2+M(t)^K) 
 +  (1+\al^{-1}) (M(t)+M(t)^{K-1}) M_H(t),
\end{align*}
where the estimate in the second term follows from 
\begin{align*}
&\int_0^t e^{-c_0 \al (t-\tau)} (1+\al \tau)^{-d-2} d\tau \\
&\lesssim 
 e^{-\frac 12 c_0 \al t}
\int_0^{\frac t2} (1+\al \tau)^{-d-2} d\tau 
+  (1+\al t)^{-d-2}
\int_{\frac t2}^t e^{-c_0 \al (t-\tau)} d\tau \\
&\lesssim \al^{-1} (1+\al t)^{-d-2}.
\end{align*}
Hence, we obtain
\[
M_H(t) \le C \nor{u_0}{L^1 \cap H^s} + C (1+\al^{-1}) (M(t)^2 + M(t)^K),
\]
which, combined with \eqref{eb7.12}, concludes \eqref{eb7.11} as desired.
\begin{proof}[Proof of Lemma \ref{L7.12}]
First of all, \eqref{Ea4.4} implies
\[
\nor{Q_H N(u(\tau))}{H^{s-1}} = 
\nor{N(u(\tau)) - Q_L N(u(\tau))}{H^{s-1}}
\lesssim \nor{N(u(\tau))}{H^{s-1}}.
\]
Before the nonlinear estimate, we note that $\rd^\gam z$ for a multi-index $\gam\in (\Z_{\ge0})^d$ with $|\gam|\ge 1$ can be written as a linear combination of the terms of the form
\[
(1-|u|^2)^{-\frac 12-k}  \rd^{\gam_1} u \cdots\rd^{\gam_{l'}} u \rd^{\gam_{l'+1}} \ovl{u} \cdots \rd^{\gam_l} \ovl{u}
\]
with $l\ge2$, $0\le l'\le l$, $k\ge 0$, $\gam_j\in (\Z_{\ge0})^d$ with $\gam_1+\cdots +\gam_l = \gam$. 
In particular, the derivatives of $z$ 
can be bounded in terms of $u$ in higher order.\par 
Now, we divide our argument into the cases $s\ge 3$ and $s=2$. 
We begin by considering the case $s\ge 3$. 
Among $N(u(\tau))$, we first observe the terms including second-order derivatives of $u$ or $z$. 
Using the condition $|u| \le \frac 12$, together with \eqref{E2.2}, the Leibniz rule, and the Sobolev embedding $H^2\subset L^\infty$, we have
\begin{equation}\label{Ex5.5}
\begin{aligned}
&\nor{z\Del u}{H^{s-1}} 
\lesssim \nor{z\Del u}{L^2} + \nor{\nab^{s-1}(z\Del u)}{L^2} \\
&\lesssim 
\nor{z}{L^\infty} \nor{\Del u}{L^2} 
+\nor{z}{L^\infty} \nor{\nab^{s-1}\Del u}{L^2}
+\sum_{j=1}^{s-1} \nor{\nab^j z}{L^4} 
\nor{\nab^{s-1-j}\Del u}{L^4}\\
&\lesssim 
\nor{u}{L^\infty}^2 \nor{\Del u}{L^2} 
+ 
(\nor{u}{L^\infty}^2 + \nor{\nab z}{W^{s-2,4}})
(\nor{\nab u_L}{H^s} + \nor{\nab u_H}{H^s} )\\
&\lesssim (1+\al \tau)^{-\frac d2-1} (M(\tau)^2 + M(\tau)^K) + 
(M(\tau) + M(\tau)^{K-1}) \nor{u_H}{H^{s+1}}
\end{aligned}
\end{equation}
for some $K\ge 2$. 
The term $iu\Del z$ is also controlled in a similar way. 
By means of the Leibniz rule and the Sobolev embedding, 
the other terms including up to first-order derivatives of $u$ and $z$ can be bounded by 
\begin{equation}\label{Ex5.6}
\begin{aligned}
&C (1+\al)
\left[ (\nor{u}{L^\infty} + \nor{\nab u}{H^{s-1}})^2 + 
(\nor{u}{L^\infty} + \nor{\nab u}{H^{s-1}})^K \right]  \\
&\le C (1+\al) (1+\al\tau)^{-\frac d2-1} (M(\tau)^2 + M(\tau)^K).
\end{aligned}
\end{equation}
Hence, we obtain \eqref{Ex5.45} for $s\ge 3$.\par
When $s=2$, we estimate the term $|\nab u|^2u$ as
\begin{align*}
\nor{|\nab u|^2 u}{H^1} &\lesssim 
\nor{|\nab u|^2 u}{L^2}
+\nor{\nab^2 u}{L^2} \nor{\nab u}{L^\infty} \nor{u}{L^\infty} + 
\nor{|\nab u|^3}{L^2} \\
&\lesssim
\nor{\nab u}{H^1}^2 + 
\nor{\nab^2 u}{L^2} (\nor{\nab u_L}{L^\infty} + \nor{\nab u_H}{H^2}) 
+ \nor{\nab u}{H^1}^3\\
&\lesssim 
(1+\al t)^{-\frac d2 -1} (M(\tau)^2 + M(\tau)^3) 
+ M(\tau) \nor{u_H}{H^3}. 
\end{align*}
We apply similar estimates to $|\nab z|^2 u$, $iu\Del z$ as well. 
All the remaining terms can be bounded in the same way as \eqref{Ex5.5} or \eqref{Ex5.6}. Hence, we obtain \eqref{Ex5.45} as desired.
\end{proof}
\appendix
\section{Local well-posedness}\label{SA}
In this section, we give a proof of Proposition \ref{P1}. What we consider is the initial-value problem for \eqref{E1.9}; namely,
\begin{equation}\label{ec7.2}
\left\{
\begin{aligned}
\rd_t \bom &= 
\al \Del\bom - (\bh + \bom)\times \Del \bom + N(\bom), && (t,x) \in [0,\infty)\times \R^d,\\
\bom(0,x) &= \bom_0(x), && x\in \R^d
\end{aligned}
\right.
\end{equation}
for given $\bom_0\in H^s(\R^d)$, 
where  
\[
N(\bom) = 
-\al \nab\times \bom + (\bh + \bom)\times (\nab \times \bom) + \al \Gam(\bom)
\]
with $\Gam(\bom)$ defined in \eqref{E1.75}. 
The proof is based on the standard energy method 
for the principal heat part $\rd_t \bom = \al \Del\bom$, 
while all the other terms are handled perturbatively. 
As mentioned in Remark \ref{R1}, 
the treatment of the nonlinearity $\bom\times \Del\bom$ is achieved by exploiting the cancellation structure in the bad interactions. See \eqref{ec7.625} and \eqref{ec7.65} for its details.
\subsection{Existence of solutions}
To show the existence of solution to \eqref{ec7.2}, we introduce a fourth-order regularization of the equation in the following form:
\begin{equation}\label{ec7.3}
\left\{
\begin{aligned}
\rd_t \bom &= - \be \Del^2 \bom 
+ \al \Del \bom - (\bh+\bom)\times \Del \bom 
+ N(\bom),\qquad (t,x)\in [0,\infty) \times\R^d,\\
\bom (0,x)&= \bom_0 (x),\qquad x\in \R^d,
\end{aligned}
\right.
\end{equation}
where $\be>0$ is a constant. 
Our construction proceeds by passing to the limit $\be\to 0$ for the solutions to \eqref{ec7.3}. 
For this sake, we first address the local well-posedness of \eqref{ec7.2}, and then derive a uniform bound of the solutions with respect to $\be\ll 1$. 
\begin{lem}\label{L7.13}
Let $s$ be an integer satisfying $s\ge 1$ for $d=1$ and $s\ge 2$ for $d=2,3$. 
For $\bom_0\in H^s(\R^d)$, there exists a unique mild solution $\bom \in C([0,T_{\max}^\be)\colon H^s(\R^d))\cap 
L^2_{\loc} ((0,T_{\max}^\be) \colon H^{s+2}(\R^d))
$, where $T_{\max}^\be$ is the maximal existence time in this class. 
Moreover, if $T_{\max}^\be <\infty$, then we have
\[
\lim_{t\to T_{\max}^\be} \nor{\bom(t)}{H^s(\R^d)} =\infty.
\]
\end{lem}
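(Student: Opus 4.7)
The plan is a contraction-mapping fixed-point argument applied to the Duhamel formulation
\begin{equation*}
\bom(t)=S_\be(t)\bom_0 + \int_0^t S_\be(t-\tau)\bigl[\al\Del\bom -(\bh+\bom)\times\Del\bom + N(\bom)\bigr](\tau)\,d\tau,
\end{equation*}
where $S_\be(t)=e^{-\be t\Del^2}$ is the biharmonic heat semigroup on $\R^d$. A direct Plancherel computation yields the smoothing bounds
\[
\nor{S_\be(t)\rd^\gam f}{H^s(\R^d)}\lesssim (\be t)^{-|\gam|/4}\nor{f}{H^s(\R^d)}
\]
for any multi-index $\gam$, obtained by bounding the Fourier multiplier $|\xi|^{|\gam|}e^{-\be t|\xi|^4}$ pointwise. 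Since the nonlinearity is at most second order, the cost $(\be t)^{-1/2}$ is integrable in $\tau$, which is the technical reason we can close the contraction.

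For the fixed-point step I would work in the complete metric space
\begin{equation*}
X_T=\bigl\{\bom\in C([0,T]\colon H^s(\R^d))\colon \nor{\bom}{L^\infty_TH^s_x}\le 2\nor{\bom_0}{H^s(\R^d)}\bigr\}
\end{equation*}
with the induced $C_tH^s$-topology. Under the stated regularity assumption on $s$, the Sobolev embedding gives $H^s\hookrightarrow L^\infty$ and $H^s$ is a Banach algebra, so one obtains nonlinear estimates
\begin{equation*}
\nor{(\bh+\bom)\times\Del\bom}{H^{s-2}}+\nor{N(\bom)}{H^{s-1}}\lesssim (1+\nor{\bom}{H^s})(1+\nor{\bom}{H^s}^2),
\end{equation*}
where in the borderline case $d=1$, $s=1$ the norm $H^{-1}$ is interpreted by duality and $\bom\times\Del\bom$ is rewritten as $\nab\cdot(\bom\times\nab\bom)$ (using $\rd_i\bom\times\rd_i\bom=0$). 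Combining these with the semigroup bounds produces a control of the form $\be^{-1/2}T^{1/2}P(\nor{\bom}{L^\infty_TH^s_x})$ for the Duhamel integral, with $P$ a polynomial, and an analogous Lipschitz estimate for the difference of two iterates; choosing $T$ small depending on $\be$, $\al$, and $\nor{\bom_0}{H^s}$ yields a unique mild solution in $C([0,T]\colon H^s(\R^d))$.

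The supplementary $L^2_{\loc}H^{s+2}$-regularity would follow from a parabolic energy estimate: pairing the equation with $(1-\Del)^s\bom$ produces a principal dissipative contribution $\be\nor{\bom}{L^2_TH^{s+2}}^2$ which, together with the algebra property and the cross-product cancellation $(\bom\times\bom)\cdot\bom\equiv 0$ highlighted in Remark \ref{R1}, absorbs all quadratic and cubic nonlinear terms. Uniqueness follows from a Gronwall argument applied to the difference of two solutions. For the blow-up alternative, since the lifespan delivered by the contraction depends only on $\nor{\bom_0}{H^s}$ (with $\be$ fixed), the usual restart-and-extend argument shows that $T^\be_{\max}<\infty$ forces $\nor{\bom(t)}{H^s}\to\infty$ as $t\to T^\be_{\max}$.

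The principal difficulty is the quasi-linear second-order term $\bom\times\Del\bom$. In the contraction step this is absorbed by $S_\be$, which gains two derivatives at the price of the integrable weight $(\be t)^{-1/2}$; in the energy estimate underlying the $L^2_tH^{s+2}$ bound it must instead be handled by repeated integration by parts exploiting the triple-product cancellation, so that no term with two unresolved top-order derivatives on $\bom$ survives. Throughout this lemma $\be>0$ is fixed, so factors such as $\be^{-1/2}$ are harmless; the delicate $\be\to 0$ passage will be addressed in the subsequent lemmas.
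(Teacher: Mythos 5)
Your overall architecture --- Duhamel formulation with the biharmonic semigroup $e^{-\be t\Del^2}$, contraction on a short time interval using the smoothing with cost $(\be t)^{-|\gamma|/4}$, Gronwall for uniqueness, and the restart argument for the blow-up alternative --- is the same as the paper's, and the $C_tH^s$ part of your contraction closes as you describe (the paper additionally splits into low and high frequencies, but your pointwise multiplier bound on $|\xi|^{|\gamma|}e^{-\be t|\xi|^4}$ suffices for the existence step in $C_tH^s$; your treatment of the borderline case $d=1$, $s=1$ via $\rd_1(\bom\times\rd_1\bom)$ and the $H^{-1}\to H^1$ smoothing is a legitimate alternative to the paper's).

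The gap is in how you propose to obtain the $L^2_{\loc}(0,T^\be_{\max}\colon H^{s+2})$ regularity, which is part of the statement and is needed later for the uniform-in-$\be$ bounds. Two concrete problems. First, your a posteriori ``parabolic energy estimate'' pairs the equation with $(1-\Del)^s\bom$ and integrates by parts to extract the dissipative term $\be\nor{\bom}{L^2_TH^{s+2}}^2$; but at that stage you only know $\bom\in C_tH^s$, so the pairing $\inp{\Del^2\bom}{(1-\Del)^s\bom}$ is not defined and the computation is circular: it presupposes exactly the $H^{s+2}$ regularity it is meant to establish. Second, the gap cannot be repaired by estimating $\nor{\Del\bom(t)}{H^s}$ pointwise in $t$ from the Duhamel formula, because the free evolution only gives $\nor{\Del e^{-t\be\Del^2}\bom_0}{H^s}\lesssim(\be t)^{-1/2}\nor{\bom_0}{H^s}$ and $(\be t)^{-1}$ is not integrable at $t=0$. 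What is genuinely needed is the $L^2$-in-time smoothing and maximal-regularity estimates, e.g.\ $\nor{\Del e^{-t\be\Del^2}f}{L^2_{t,x}}\le C_\be\nor{f}{L^2_x}$ (coming from $\int_0^\infty|\xi|^4e^{-2\be t|\xi|^4}\,dt=(2\be)^{-1}$) together with its inhomogeneous counterpart. The paper resolves this by carrying the $L^2_TH^{s+2}$ norm inside the contraction space from the start and closing the fixed-point estimate in that stronger topology; note also that in the case $d=1$, $s=1$ the paper's nonlinear estimate actually uses the $L^2_TH^{s+2}$ component of the norm, which is a further reason to build it into the fixed-point space rather than recover it afterwards. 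With this modification your argument goes through.
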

We remark that  
the solution $\bom$ in Lemma \ref{L7.13} is indeed in $C((0,T_{\max}^\be) \colon H^L(\R^d))$ for any $L\in\N$, which follows from repeated use of the smoothing effect. 
\begin{proof}
We rewrite \eqref{ec7.3} in the Duhamel formula:
\begin{equation}\label{ec7.4}
\bom(t) = e^{- t \be \Del^2} \bom_0 +
\int_0^t e^{- (t-\tau) \be \Del^2} \tilde{N} (\bom (\tau)) d\tau
\end{equation}
where 
\[
\tilde{N} (\bom ) = 
 \al \Del \bom - (\bh+\bom)\times \Del \bom 
+ N(\bom).
\]
Let us set the right hand side of \eqref{ec7.4} by $\Phi (\bom)$. 
We claim that for given $\bom_0\in H^s(\R^d)$, $\Phi$ is a contraction mapping on 
\begin{align*}
\tilde\boM=
\{ \bom\in C ([0,T]\colon H^s(\R^d) )\cap L^2(0,T\colon H^{s+2}(\R^d)) \\ 
|\ \nor{\bom}{L^\infty_T H^s_x
} + 
\nor{\Del \bom}{
L^2_TH^{s}_x
} \le M \}
\end{align*}
where 
\begin{equation}\label{ed7.0}
T= 
c (1+\nor{\bom_0}{H^s(\R^d)})^{-K}
,\qquad M= C \nor{\bom_0}{H^s(\R^d)}
\end{equation}
for some constants $c,C, K>0$. 
First, 
define the frequency cut-off operators 
$\Pi_L = \boF^{-1} 
\mathbbm{1}_{|\xi|\le 1} \boF$ 
and $\Pi_H = 1- \Pi_L$. 
Then, the following estimates for $e^{-t \be \Del^2}$ follow from 
the $L^\infty$-bounds for the Fourier multipliers, the energy estimate, and the interpolation inequalities: 
\begin{gather}\label{ed7.1}
\nor{ e^{-t\be\Del^2} f}{L^\infty_t L^2_{x}} +\nor{\Del e^{-t\be\Del^2} f}{L^2_{t,x}} \le C_\be \nor{f}{L^2_x},
\\
\label{ed7.2}
\nor{e^{-t \be\Del^2} \Pi_H f }{L^2_x} \le C_{\be,k} t^{-\frac k4}
\nor{|\nab|^{-k} \Pi_H f}{L^2_x},\quad t>0,\quad 0\le k\le 2,
\\
\label{ed7.3}
\nor{\Del \int_0^t e^{-(t-\tau)\be\Del^2} \Pi_H F (\tau) d\tau  
}{L^2_{t,x}} \le C_{\be,k} \nor{|\nab|^{-k} \Pi_H F}{L^{\frac{4}{4-k}}_tL^2_{x}}, \quad 0\le k\le 2.
\end{gather}
By \eqref{ed7.1}, we have
\[
\nor{e^{-t\be\Del^2} \bom_0}{L^\infty_T H^s_x}
+ \nor{\Del e^{-t\be\Del^2} \bom_0}{L^2_T H^s_x}
\le C\nor{\bom_0}{H^s_x}.
\]
Now we suppose $s\ge 2$. Then, using \eqref{ed7.1}--\eqref{ed7.3}, we estimate
\begin{align*}
&\nor{\int_0^t e^{-(t-\tau) \be\Del^2} \tilde{N} (\bom(\tau) )d\tau }{L^\infty_TH^s_x}\\
&\le 
C\nor{\Pi_L \int_0^t e^{-(t-\tau) \be\Del^2} \tilde{N} (\bom(\tau) )d\tau }{L^\infty_TL^2_x} \\
&\qquad + 
C\nor{\Pi_H \int_0^t e^{-(t-\tau) \be\Del^2} \tilde{N} (\bom(\tau) )d\tau }{L^\infty_TH^s_x}\\
&\le
C\int_0^T \nor{\Pi_L \tilde{N} (\bom(\tau))}{L^2_x} d\tau 
+ C\sup_{t\in [0,T]} \int_0^t (t-\tau)^{-\frac 12} \nor{\Pi_H \tilde{N}(\bom(\tau))}{H^{s-2}_x} d\tau \\
&\le C (T^{\frac 12} + T) \nor{\tilde{N} (\bom)}{L^\infty_T H^{s-2}_x},
\end{align*}
and
\begin{align*}
&\nor{\Del \int_0^t e^{-(t-\tau) \be\Del^2} \tilde{N} (\bom(\tau) )d\tau }{L^2_TH^s_x}\\
&\le 
C\nor{\Pi_L \int_0^t e^{-(t-\tau) \be\Del^2} \tilde{N} (\bom(\tau) )d\tau }{L^2_TL^2_x}\\
&\qquad
+ 
C\nor{\Pi_H \Del \int_0^t e^{-(t-\tau) \be\Del^2} \tilde{N} (\bom(\tau) )d\tau }{L^2_TH^s_x}\\
&\le
C T^{\frac 12} \nor{\Pi_L \tilde{N} (\bom)}{L^1_T L^2_x} + C \nor{\Pi_H \tilde{N}(\bom)}{L^2_TH^{s-2}_x} \\
&\le C (T^{\frac 12} + T^{\frac 32}) \nor{\tilde{N} (\bom)}{L^\infty_T H^{s-2}_x}.
\end{align*}
The nonlinear terms are estimated simply by the Leibniz rule and the Sobolev embedding, resulting in
\[
\nor{\tilde{N} (\bom)}{L^\infty_T H^{s-2}_x} \le C (\nor{\bom}{L^\infty_T H^s_x} + \nor{\bom}{L^\infty_T H^s_x}^{\tilde{K}} )
\]
for some integer $\tilde{K}\ge 1$. 
Thus, by setting $T$ and $M$ as in \eqref{ed7.0} 
with an appropriate choice of $c,C,K>0$, 
it follows that $\Phi$ maps $\tilde\boM$ into itself. When $d=1$ and $s=1$, we instead estimate the inhomogeneous term as
\begin{align*}
&\nor{\int_0^t e^{-(t-\tau) \be\Del^2} \tilde{N} (\bom(\tau) )d\tau }{L^\infty_TH^1_x}\\
&\le
C\int_0^T \nor{\Pi_L \tilde{N} (\bom(\tau))}{L^2_x} d\tau 
+ C\sup_{t\in [0,T]} \int_0^t (t-\tau)^{-\frac 14} \nor{\Pi_H \tilde{N}(\bom(\tau))}{L^2_x} d\tau \\
&\le C (T^{\frac 14} + T^{\frac 12}) \nor{\tilde{N} (\bom)}{L^2_T L^2_x},
\end{align*}
and
\begin{align*}
&\nor{\Del \int_0^t e^{-(t-\tau) \be\Del^2} \tilde{N} (\bom(\tau) )d\tau }{L^2_TH^1_x} \\
&\le 
CT^{\frac 12} \nor{\Pi_L \tilde{N}(\bom) }{L^1_TL^2_x} 
+ C \nor{\Pi_H \tilde{N}(\bom) }{L^{\frac{4}{3}}_TL^2_x}
\le C (T^{\frac 14} + T) \nor{\tilde{N} (\bom)}{L^2_T L^2_x}.
\end{align*}
By the Leibniz rule and the Sobolev embedding, we have
\[
\nor{\tilde{N} (\bom)}{L^2_T L^2_x} \le C
(1+T^{\frac 12})
(\nor{\bom}{L^\infty_T H^1_x} 
+ \nor{\Del \bom}{L^2_T H^1_x} 
+ \nor{\bom}{L^\infty_T H^1_x}^{\tilde{K}} 
+ \nor{\Del \bom}{L^2_T H^1_x}^{\tilde{K}})
\]
for some integer $\tilde{K}\ge 1$. 
Hence the same conclusion follows in this case as well. 
The difference estimate can be derived in a similar way, concluding that $\Phi$ is a contraction on $\tilde{\boM}$. 
The remaining part of the proof proceeds in a standard manner, and hence we omit the details.
\end{proof}
Next, we claim the following uniform bound in $\be$:
\begin{lem}\label{L7.14}
Let $s$ be an integer satisfying $s\ge 1$ when $d=1$ and $s\ge 2$ when $d=2,3$. 
Let $\bom(t) \in C([0,T_{\max}^\be) \colon H^s(\R^d))$ be a unique mild solution to \eqref{ec7.3}. 
Then, there exist $C= C(\nor{\bom_0}{H^s(\R^d)}), 
T=T(\nor{\bom_0}{H^s(\R^d)})>0$, independent of $\be$, such that 
$T_{\max}^\be \ge T$ for all $\be>0$, and
\[
\nor{\bom}{L^\infty (0,T\colon H^{s}_x)}
+ \nor{\nab \bom}{L^2 (0,T\colon H^{s}_x)} 
+ \be \nor{\Del \bom}{L^2 (0,T\colon H^{s}_x)} \le C.
\]
\end{lem}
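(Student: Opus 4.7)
The plan is to close a uniform-in-$\be$ \emph{a priori} $H^s$ estimate by the standard energy method for \eqref{ec7.3}, and then to invoke the continuation criterion from Lemma \ref{L7.13}. The guiding principle is that the biharmonic dissipation $\be\nor{\Del\bom}{L^2}^2$, while welcome on the favorable side of any energy identity, must never be used to absorb a nonlinear term, since that would produce constants proportional to $\be^{-1}$; all high-derivative nonlinearities must instead be controlled either by the heat dissipation $\al\nor{\nab\bom}{H^s}^2$ or by structural cancellation.

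For a multi-index $\gam$ with $|\gam|\le s$, the first step is to apply $\rd^\gam$ to \eqref{ec7.3} and pair with $\rd^\gam\bom$ in $L^2(\R^d)$, producing
\[
\tfrac12\tfrac{d}{dt}\nor{\rd^\gam\bom}{L^2}^2 + \al\nor{\nab\rd^\gam\bom}{L^2}^2 + \be\nor{\Del\rd^\gam\bom}{L^2}^2 = \boI_1 + \boI_2,
\]
with $\boI_1 = -\inp{\rd^\gam[(\bh+\bom)\times\Del\bom]}{\rd^\gam\bom}$ and $\boI_2 = \inp{\rd^\gam N(\bom)}{\rd^\gam\bom}$. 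Since $\boI_2$ contains at most first-order derivatives of $\bom$, I would bound it by Moser/Kato--Ponce product estimates together with the Sobolev embedding (the hypotheses $s\ge 1$ for $d=1$ and $s\ge 2$ for $d=2,3$ ensure $H^s\hookrightarrow W^{1,\infty}$ where needed), obtaining $|\boI_2| \le \Psi(\nor{\bom}{H^s})\nor{\bom}{H^s}^2 + \tfrac\al4\nor{\nab\bom}{H^s}^2$ for some continuous increasing function $\Psi$.

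The main obstacle is the top-order piece $(\bh+\bom)\times\rd^\gam\Del\bom$ inside $\boI_1$. The strategy, as in the treatment of the standard LLG equation in \cites{MR1877231,MR2600676}, is to exploit two vector-algebraic cancellations: $a\cdot(a\times b)=0$ and $(a\times b)\cdot c = -(a\times c)\cdot b$. Using the second identity to rotate the triple product and integrating by parts once to convert $\rd^\gam\Del\bom$ into $\nab\rd^\gam\bom$, the remaining highest-derivative term reduces to $\inp{\nab\rd^\gam\bom\times(\bh+\bom)}{\nab\rd^\gam\bom}$, which vanishes pointwise by the first identity. What survives is a term of the schematic form
\[
\int_{\R^d}\nab\rd^\gam\bom\cdot\bigl(\rd^\gam\bom\times\nab(\bh+\bom)\bigr)\,dx,
\]
which is bounded by $C(1+\nor{\nab\bom}{L^\infty})\nor{\bom}{H^s}\nor{\nab\bom}{H^s}$ and is absorbable into the heat dissipation via Young's inequality. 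The subprincipal Leibniz commutators $\rd^\al(\bh+\bom)\times\rd^{\gam-\al}\Del\bom$ with $|\al|\ge 1$ are handled analogously by Kato--Ponce; in the borderline case $(d,s)=(1,1)$, where $\nab\bom\in L^\infty$ is not directly available, a Gagliardo--Nirenberg interpolation between $\nor{\bom}{H^1}$ and $\nor{\bom}{H^2}$ provides an adequate substitute.

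Summing over $|\gam|\le s$, absorbing all high-derivative terms into $\tfrac\al2\nor{\nab\bom}{H^s}^2$, and keeping the $\be$-term on the favorable side yields an inequality of the form
\[
\frac{d}{dt}\nor{\bom}{H^s}^2 + \al\nor{\nab\bom}{H^s}^2 + 2\be\nor{\Del\bom}{H^s}^2 \le \widetilde{\Psi}(\nor{\bom}{H^s}),
\]
with $\widetilde{\Psi}$ continuous, increasing, and independent of $\be$. ODE comparison then delivers a time interval $[0,T]$ with $T=T(\nor{\bom_0}{H^s})$ on which $\nor{\bom(t)}{H^s}$ stays uniformly bounded; integrating the inequality over $[0,T]$ supplies the $L^2_TH^s_x$ bounds on $\nab\bom$ and on $\sqrt{\be}\,\Del\bom$ (hence on $\be\,\Del\bom$ for $\be$ bounded). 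The blow-up criterion in Lemma \ref{L7.13} then forces $T_{\max}^\be\ge T$, completing the argument. The most delicate step is the verification of the pointwise cancellation at arbitrary order $|\gam|\le s$; once that is in place, the remainder is a routine combination of Moser-type estimates and Gronwall.
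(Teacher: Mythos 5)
Your proposal is correct and follows essentially the same route as the paper: the order-$s$ energy method for the regularized equation, keeping the biharmonic dissipation only on the favorable side, killing the top-order quasilinear term via integration by parts together with the pointwise cancellation $\inp{(\bh+\bom)\times\rd_j\rd^\gam\bom}{\rd_j\rd^\gam\bom}_{L^2_x}=0$ (this is exactly the paper's \eqref{ec7.625}--\eqref{ec7.65}), and closing with Gagliardo--Nirenberg for the borderline terms; your ODE-comparison closure is interchangeable with the paper's device of integrating in time and using a continuity argument with small powers of $T$. One justification in your write-up is wrong, though: $H^s(\R^d)\hookrightarrow W^{1,\infty}(\R^d)$ requires $s>1+\frac d2$ and therefore \emph{fails} in the admissible cases $(d,s)=(2,2)$ and $(3,2)$, so for terms such as $\rd^\gam(|\nab\bom|^2\bom)$ with $|\gam|=s=2$ you cannot put $\nab\bom$ in $L^\infty$; the Gagliardo--Nirenberg interpolation you reserve for $(d,s)=(1,1)$ is in fact also needed there (this is precisely how the paper handles \eqref{e:17.1}), and with that substitution your absorption into $\frac\al4\nor{\nab\bom}{H^s}^2$ goes through with $\al$-dependent but $\be$-independent constants, which is all the lemma requires.
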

\begin{proof}
We shall prove this via the energy method. 
Let $\gam \in \Z_+^d$ be any multi-index with $|\gam|\le s$. 
Taking $\rd^\gam$ on both sides of \eqref{ec7.3}, we obtain
\[
\rd_t \rd^\gam \bom = -\be \Del^2  \rd^\gam \bom 
+ \al \Del \rd^\gam \bom
- (\bh + \bom) \times \Del \rd^{\gam} \bom
+ \tilde{N}_\gam (\bom),
\]
where $\tilde{N}_\gam(\bom)$ is the remaining lower order terms:
\[
\tilde{N}_\gam (\bom) = 
-\sum_{
\substack{\gam^{(1)}+\gam^{(2)}=\gam\\ \gam^{(1)}\neq 0}}
(\rd^{\gam^{(1)}} \bh \times \Del \rd^{\gam^{(2)}} \bom
+ \rd^{\gam^{(1)}} \bom \times \Del \rd^{\gam^{(2)}} \bom
)
+ \rd^\gam N(\bom).
\]
By this, we have
\begin{align*}
\frac 12 \frac d{dt} \nor{\rd^\gam \bom}{L^2_x}^2 
&= \inp{\rd_t \rd^\gam \bom}{\rd^\gam \bom}_{L^2_x}\\
&=
-\be \nor{\Del \rd^\gam \bom}{L^2_x}^2 
-\al \nor{\nab \rd^\gam \bom}{L^2_x}^2 \\
&\qquad -\inp{(\bh + \bom ) \times \Del  \rd^\gam \bom}{\rd^\gam \bom}_{L^2_x} 
+ \inp{\tilde{N}_\gam (\bom)}{\rd^\gam \bom}_{L^2_x}.
\end{align*}
Integrating this over $[0,T]$ yields
\begin{equation}\label{ec7.5}
\begin{aligned}
& \frac 12 \nor{\rd^\gam \bom}{L^\infty_T L^2_x}^2 
+ \al \int_0^T \nor{\nab \rd^\gam \bom(t)}{L^2_x}^2 dt 
+ \be \int_0^T \nor{\Del \rd^\gam \bom(t)}{L^2_x}^2 dt\\
&= \frac 12 \nor{\rd^\gam \bom_0}{L^2_x}^2 + 
\int_0^T 
\left( -\inp{(\bh + \bom(t) ) \times \Del  \rd^\gam \bom (t)}{\rd^\gam \bom (t)}_{L^2_x} \right.\\
&\hspace{180pt}\left.+ \inp{\tilde{N}_\gam (\bom (t))}{\rd^\gam \bom (t)}_{L^2_x} \right) dt
\end{aligned}
\end{equation}
for $T\in (0, T_{\max}^\be)$. 
Now we estimate the right hand side of \eqref{ec7.5}. 
First, we claim that
\begin{equation}\label{e:17.1}
\begin{aligned}
&\int_0^T \inp{\tilde{N}_\gam (\bom (t))}{\rd^\gam \bom(t)}_{L^2_x}  dt 
\\
&\le 
C 
 (T^{\frac 12-\frac d4}+ T) 
\left(
 \nor{\bom}{L^\infty_T H^2_x}^2 
+ \nor{\nab \bom}{L^2_T H^{2}_x}^2
+ \nor{\bom}{L^\infty_T H^2_x}^K \right)
\end{aligned}
\end{equation}
for some $C>0$ and $K\ge 2$. 
If $s\ge 3$, \eqref{e:17.1} follows simply 
by the Leibniz rule and the Sobolev embedding $H^2(\R^d)\subset L^\infty(\R^d)$. 
If $s=1,2$, 
we need more subtlety to obtain \eqref{e:17.1} for $\gam$ with $|\gam|=s$. 
We shall observe the typical term $|\nab \bom|^2\bom$. 
Denoting $\rd$ to represent differentiation in any index, 
we have
\begin{align*}
&\int_0^T \left|\inp{\rd^2 (|\nab \bom|^2\bom)}{\rd^2 \bom}_{L^2_x}\right| dt \\
&\sim
\int_0^T \left|\inp{ (\rd^2 \nab \bom \cdot \nab \bom) \bom)}{\rd^2 \bom}_{L^2_x}\right| dt
+
\int_0^T \left|\inp{ (\rd \nab \bom \cdot \rd \nab \bom)\bom)}{\rd^2 \bom}_{L^2_x}\right| dt \\
&
\qquad +
\int_0^T \left|\inp{ (\rd \nab \bom \cdot \nab \bom) \rd\bom)}{\rd^2 \bom}_{L^2_x}\right| dt
+
\int_0^T \left|\inp{ |\nab \bom |^2 \rd^2 \bom)}{\rd^2 \bom}_{L^2_x}\right| dt \\
&
\lesssim
\int_0^T \nor{\rd^2 \nab \bom}{L^2_x} \nor{\nab \bom}{L^4_x} \nor{\bom}{L^\infty_x} \nor{\rd^2 \bom}{L^4_x} dt\\
&\qquad +
\int_0^T \nor{\rd \nab \bom}{L^4_x}^2 \nor{\bom}{L^\infty_x} \nor{\rd^2 \bom}{L^2_x}  dt \\
&
\qquad +
\int_0^T \nor{\rd \nab \bom}{L^4_x} \nor{\nab \bom}{L^4_x} \nor{\rd \bom}{L^4_x} \nor{\rd^2 \bom}{L^4_x} dt
+
\int_0^T \nor{\nab \bom}{L^4_x}^2 \nor{\rd^2 \bom}{L^4_x}^2  dt\\
&
\lesssim
\int_0^T \nor{\nab \bom}{H^2_x} \nor{\bom}{H^2_x}^2 
\left(\nor{\nab \rd^2 \bom}{L^2_x}^{\frac d4} \nor{\rd^2 \bom}{L^2_x}^{1-\frac d4}\right) dt\\
&\qquad +\int_0^T 
\nor{\bom}{H^2_x}^2
\left( \nor{\nab \rd^2 \bom}{L^2_x}^{\frac d4}  \nor{\rd^2 \bom}{L^2_x}^{1-\frac d4}\right)^2 
dt \\
&\lesssim 
T^{\frac 12 -\frac d8} \nor{\nab \bom}{L^2_TH^2_x}^{1+\frac d4} \nor{\bom}{L^\infty_T H^2_x}^{3-\frac d4} + T^{1-\frac d4}\nor{\nab \bom}{L^2_T H^2_x}^{\frac d2} \nor{\bom}{L^\infty_TH^2_x}^{4-\frac d2} \\
&\lesssim 
(T^{\frac 12 -\frac d8} + T^{1-\frac d4})(
\nor{\nab \bom}{L^2_TH^2_x}^{2} 
+ \nor{\bom}{L^\infty_TH^2_x}^{K})
\end{align*}
for some number $K\ge 2$, 
where we used the Gagliardo--Nirenberg inequality. 
When $d=1$ and $s=1$, 
using the Sobolev embedding $H^1(\R)\subset L^\infty(\R)$, we also have
\begin{align*}
&\int_0^T \left|\inp{\rd (|\rd \bom|^2\bom)}{\rd \bom}_{L^2_x}\right| dt \\
&\sim
\int_0^T \left|\inp{(\rd^2 \bom \cdot \rd \bom )\bom}{\rd \bom}_{L^2_x}\right| dt 
+
\int_0^T \left|\inp{|\rd \bom|^2 \rd\bom}{\rd \bom}_{L^2_x}\right| dt \\
&\lesssim 
\int_0^T 
\nor{\rd^2 \bom}{L^2_x} \nor{\rd \bom}{L^4_x} \nor{\bom}{L^\infty_x} \nor{\rd \bom}{L^4_x} dt
+
\int_0^T 
\nor{\rd \bom}{L^4_x}^4 dt\\
&\lesssim 
\int_0^T 
\nor{\rd^2 \bom}{L^2_x} \nor{\bom}{L^\infty_x}
\left(\nor{\rd^2 \bom}{L^2_x}^{\frac 14} \nor{\rd\bom}{L^2_x}^{\frac 34}\right)^{2}   dt\\
&\qquad  +
\int_0^T 
\left(\nor{\rd^2 \bom}{L^2_x}^{\frac 14} \nor{\rd\bom}{L^2_x}^{\frac 34}  \right)^4 dt
\\
&\lesssim 
T^{\frac 14} \nor{\rd \bom}{L^2_TH^1_x}^{\frac 32} \nor{\bom}{L^\infty_TH^1_x}^{\frac 52} 
+ 
T^{\frac 12} \nor{\rd \bom}{L^2_TH^1_x} \nor{\bom}{L^\infty_TH^1_x}^{3}\\
&\lesssim 
(T^{\frac 14} + T^{\frac 12})(
\nor{\rd \bom}{L^2_TH^1_x}^{2} 
+ \nor{\bom}{L^\infty_TH^1_x}^{K})
\end{align*}
for some $K\le 2$. 
The other terms in $\tilde{N}_\gam$ can be estimated similarly, yielding 
\eqref{e:17.1}. 
Next, we observe that the integration by part gives
\begin{equation}\label{ec7.625}
\begin{aligned}
\inp{(\bh + \bom ) \times \Del  \rd^\gam \bom}{\rd^\gam \bom}_{L^2_x} 
&=
\sum_{j=1}^d \inp{\rd_j [(\bh+\bom) \times \rd_j \rd^\gam \bom] }{ \rd^\gam \bom}_{L^2_x}
\\
&\qquad 
- \sum_{j=1}^d \inp{[\rd_j (\bh+\bom)] \times \rd_j \rd^\gam \bom }{ \rd^\gam \bom}_{L^2_x}\\
&=
- \sum_{j=1}^d \inp{[\rd_j (\bh+\bom)] \times \rd_j \rd^\gam \bom }{ \rd^\gam \bom}_{L^2_x}
,
\end{aligned}
\end{equation}
where we used the cancellation structure that
\begin{equation}\label{ec7.65}
\inp{(\bh +\bom) \times \rd_j \rd^\gam \bom}{ \rd_j \rd^\gam \bom}_{L^2_x}
= \int_{\R^d} (\bh +\bom) \times \rd_j \rd^\gam \bom \cdot \rd_j \rd^\gam \bom dx=0.
\end{equation}
Hence the second term in \eqref{ec7.5} can be estimated in the same way as in \eqref{e:17.1}. 
Therefore, we have
\begin{align*}
&\frac 12 \nor{\rd^\gam \bom}{L^\infty_T L^2_x}^2 
+ 
\al \nor{\nab \rd^\gam \bom}{L^2_T L^2_x}^2 
+\be \nor{\Del \rd^\gam \bom}{L^2_T L^2_x}^2 \\
&\le \frac 12 \nor{\rd^\gam \bom_0}{L^2_x}^2 + 
C (T^{\frac 12 -\frac d4} + T) \left(\nor{\bom}{L^\infty_T H^s_x}^2 
+ \nor{\nab \bom}{L^2_T H^s_x}^2
+ \nor{ \bom}{L^\infty_T H^s_x}^K \right).
\end{align*}
Thus, the conclusion follows if we set $T=T(\nor{\bom_0}{H^s})$, combined with 
a continuity argument.
\end{proof}
\begin{proof}[Proof of the existence part in Proposition \ref{P1}] 
For $\be>0$, 
let $\bom^\be$ be the solution to \eqref{ec7.3} in Lemma \ref{L7.13} with fixed initial data $\bom^\be (0,x)=\bom_0 (x)$. 
Then, Lemma \ref{L7.14} implies that 
\begin{equation}\label{ec7.7}
\sup_{0<\be<1} 
\left(
\nor{\bom^\be}{L^\infty_T H^{s}_x} +
\nor{\bom^\be}{L^2_T H^{s+1}_x} + \nor{\rd_t \bom^\be}{L^2_TL^2_x}
\right) <\infty
\end{equation}
with $T=T(\nor{\bom_0}{H^s})$ independent of $\be$. 
In particular, $\{\bom^\be\}_{\be>0}$ is bounded in $H^1((0,T)\times \R^d)$. 
Therefore, there exists a subsequence $\{\bom^{\be_n} \}_{n\in\N}$ and $\bom\colon (0,T)\times \R^d\to \R^3$ such that
\begin{align*}
\bom^{\be_n} \wto \bom  
\quad \text{ in } L^2_TH^{s+1}(\R^d),&& 
\text{and}&&
\bom^{\be_n} \wto \bom \quad \text{ in }
H^1((0,T)\times B_R(0)).
\end{align*}
Moreover, by the Rellich--Kondrachov theorem, we also have
\[
\bom^{\be_n} \to \bom \quad \text{strongly in } L^2((0,T)\times \R^d) \text{ for any } R>0.
\]
Then, applying the same argument in \cite{MR2304153}, 
we can see that 
$\bom^{\be_n}$ in fact converges to $\bom$ strongly in $L^\infty(0,T\colon B_R(0))$ for any $R>0$. 
In particular, it follows that $\bom\in C([0,T]\colon B_R(0))$ for each $R>0$, and that $\bom(0)= \bom_0$.\par
Next, we show that $\bom$ weakly solves \eqref{E1.7}. 
Let $\bpsi\in C_0^\infty ((0,T)\times \R^d\colon \R^3)$. 
Then for each $n$, we have
\begin{equation}\label{ec7.75}
\begin{aligned}
&-\int_{(0,T)\times \R^d} \bom^{\be_n} \cdot \rd_t \bpsi  dt dx \\
&=
-\be_n \int_{(0,T)\times \R^d} \bom^{\be_n} \cdot\Del^2 \bpsi  dt dx
+\al \int_{(0,T)\times \R^d} \bom^{\be_n} \cdot \Del \bpsi  dt dx \\
&\qquad 
 +\int_{(0,T)\times \R^d} \left( -(\bh +\bom^{\be_n})\times \Del \bom^{\be_n} 
+
N(\bom^{\be_n})\right)  \cdot \bpsi dt dx.
\end{aligned}
\end{equation}
Here, the Leibniz rule and the Sobolev inequality yield
\begin{align*}
&\nor{(\bh+\bom^{\be_n}) \times \Del \bom^{\be_n}}{L^2_T H^{s-1}_x} +
\nor{N(\bom^{\be_n})}{L^2_T H^{s-1}_x} \\
&\le C = C \left(s,
\nor{\bom^{\be_n}}{L^\infty_T H^s_x}, \nor{\bom^{\be_n}}{L^2_T H^{s+1}_x }
\right),
\end{align*}
which is bounded in $n\in\N$. 
Hence, there is a subsequence of $\{\be_n\}_{n=1}^\infty$, where we again write it in the same notation, such that 
$(\bh+\bom^{\be_n}) \times \Del \bom^{\be_n} \wto (\bh+\bom) \times \Del \bom$, 
$N(\bom^{\be_n}) \wto N(\bom)$ in $L^2_T H^{s-1}_x$ as $n\to\infty$. 
Therefore, taking $n\to\infty$ for \eqref{ec7.75}, we have
\begin{align*}
&-\int_{(0,T)\times \R^d} \bom \cdot \rd_t \bpsi dt dx \\
&=
\al \int_{(0,T)\times \R^d} \bom \cdot \Del \bpsi dt dx
+\int_{(0,T)\times \R^d} \left( -(\bh + \bom)\times \Del \bom + N(\bom) \right) \cdot \bpsi dt dx,
\end{align*}
which implies that $\bom$ is a weak solution to \eqref{E1.7}.\par
We finally show that $\bom$ has the desired regularity. Since $\bom_0\in H^s(\R^d)$, 
$\bom \in L^2_TH^{s+1}_x$, and $N(\bom)\in L^2_TH^{s-1}_x$, 
if we apply 
Corollary 4.1.9 in \cite{MR1691574} with $X=L^2(\R^d)$ and $A=\al \Del$, 
then it follows that $\bom$ is actually a mild solution to \eqref{ec7.3}:
\[
\bom (t) = e^{\al t\Del} \bom_0 + \int_0^t e^{\al (t-\tau) \Del} N(\bom)(\tau) d\tau .
\]
This particularly implies that $\bom\in C([0,T]\colon  H^s(\R^d))\cap L^2(0,T\colon H^{s+1}(\R^d))$ as desired.
\end{proof}
\subsection{Conclusive step}\label{SA2}
We next address the uniqueness and the continuous dependence on initial data. 
Let $T>0$, and 
let $\bom^1$, $\bom^2\in C([0,T]\colon H^{s-1}(\R^d))$ be solutions to \eqref{ec7.2} with initial data $\bom^1_0$, $\bom^2_0$, respectively. Set $\del \bom = \bom^1 -\bom^2$ and $\del \bom_0 = \bom^1_0 - \bom^2_0$. 
Then, $\del \bom$ satisfies
\[
\left\{
\begin{aligned}
\rd_t \del \bom &= \al \Del \del \bom - (\bh + \bom^1) \times \Del \del \bom
+ \del N,\quad (t,x)\in [0,\infty) \times \R^d \\
\del \bom (0,x) &= \del \bom_0 (x),\quad x\in \R^d,  
\end{aligned}
\right.
\]
where
\[
\del N := 
-\del \bom \times \Del \bom^2 + N(\bom^1) - N(\bom^2).
\]
For $\tilde{T} \in (0,T)$, 
we apply the same energy method as in the proof of Lemma \ref{L7.14}, yielding
\begin{equation}\label{ec7.8}
\begin{aligned}
&\frac 12 \nor{\del \bom}{L^\infty_{\tilde{T}} H^{s-1}_x}^2 
+ \al \nor{\nab \del \bom}{L^2_{\tilde{T}} H^{s-1}_x}^2 \\
&\le \frac 12 \nor{\del \bom_0}{H^{s-1}_x}^2 
+ C (\tilde{T}^{\frac 12- \frac d4} + \tilde{T}) (1+ M^K) 
\left(
\nor{\del \bom}{L^\infty_{\tilde{T}} H^{s-1}_x}^2 
+\nor{\nab \del \bom}{L^2_{\tilde{T}} H^{s-1}_x}^2 
\right)
\end{aligned}
\end{equation}
for some $C>0$ and $K> 0$, where $M= \nor{\bom^1}{L^\infty_{\tilde{T}}H^{s}_x} + \nor{\bom^2}{L^\infty_{\tilde{T}}H^s_x}
+\nor{\nab \bom^1}{L^2_{\tilde{T}}H^{s}_x} + \nor{\nab \bom^2}{L^2_{\tilde{T}}H^{s}_x}
$. 
Therefore, if we take $\tilde{T}$ sufficiently small accordingly to $M$, we have
\begin{equation}\label{ec7.9}
\nor{\del \bom}{L^\infty_{\tilde{T}} H^{s-1}_x} \le  C \nor{\del \bom_0}{H^{s-1}_x}.
\end{equation}
Repeating the above argument extends the interval into $[0,T]$, 
implying the desired conclusion.\par
Finally, suppose that $|\bh(x) + \bom_0(x)|= 1$ for all $x\in\R^d$. Set 
\[
F(t,x) = e^{-2\al t}\left( |\bh (x)+\bom(t,x)|^2 -1\right) = 
e^{-2\al t} \left(2 \bh(x)\cdot \bom(t,x) + |\bom(t,x)|^2\right).
\] 
Then 
by direct computation, 
\eqref{ec7.2} implies that $F$ satisfies
\[
\rd_t F = \al \Del F,\qquad F(0,x)=0.
\]
Since $F\in C_tL^2_x$, the 
uniqueness for the heat equation implies that $F\equiv 0$, and hence 
$|\bh(x)+\bom(t,x)|=1$ for all $(t,x)\in [0, T_{\max})\times \R^d$. 
Therefore, the proof is complete.
\section{Proof of \eqref{E2.3}}\label{SB}
In this section, we give a proof of \eqref{E2.3}. We first recall the setting and the claim. 
Let $u\colon \R^d\to\C$ be a function with $|u|\le \frac 12$ on $\R^d$, 
and let $z$, $\bom$ be the associated function in \eqref{E2.2}, \eqref{E2.1}, respectively. Then we show the following:
\begin{lem}
For $s\in \Z_{\ge 0}$ and 
$p\in [\frac d2,\infty]$ with $p>1$, 
there exists $C=C(s,p)>0$ such that
\begin{align}
\nor{\bom}{W^{s,p}(\R^d)} 
&\le C
\left(\nor{u}{W^{s,p}(\R^d)} + 
\nor{u}{W^{s,p}(\R^d)}^{\max\{s,1\}}\right), \label{EB1} \\
\nor{u}{W^{s,p}(\R^d)} 
&\le C  \nor{\bom}{W^{s,p}(\R^d)}.\label{EB2}
\end{align}
\end{lem}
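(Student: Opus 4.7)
The approach is straightforward given the structure of the map $\bom\mapsto u$: both \eqref{EB1} and \eqref{EB2} reduce to the Leibniz rule on an orthonormal frame whose basis vectors $\bJ_1, \bJ_2, \bh$ are smooth trigonometric maps with uniformly bounded derivatives of every order, plus a composition estimate for the auxiliary scalar $z=f(|u|^2)$, where $f(t)\coloneqq -1+\sqrt{1-t}$ is real-analytic on the compact interval $[0,1/4]$, on which $|u|^2$ takes values by hypothesis.

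For \eqref{EB2}, the plan is to exploit the orthonormality of $\{\bJ_1,\bJ_2,\bh\}$ and write $u_j=\bom\cdot\bJ_j$ for $j=1,2$. Then, since $\sup_{x}|\rd^\al\bJ_j(x)|\le C_\al$ for every multi-index $\al$, the Leibniz formula gives, for $|\al|\le s$,
\[
\nor{\rd^\al u_j}{L^p}
\le \sum_{\be\le\al}\binom{\al}{\be}\nor{\rd^{\al-\be}\bJ_j}{L^\infty}\nor{\rd^\be\bom}{L^p}
\le C_s\nor{\bom}{W^{s,p}},
\]
and \eqref{EB2} follows by summing in $\al,j$.

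For \eqref{EB1}, the plan is to decompose $\bom=u_1\bJ_1+u_2\bJ_2+z\bh$. The linear terms $u_j\bJ_j$ are handled exactly as above and contribute $C\nor{u}{W^{s,p}}$. The remaining task is to bound $\nor{z}{W^{s,p}}$, and the case $s=0$ is immediate from $|z|\le|u|^2\le \tfrac12|u|$ (cf.\ \eqref{E2.25}). For $s\ge 1$, I would apply Fa\`a di Bruno's formula together with the Leibniz rule to express each $\rd^\al z$ (with $1\le|\al|\le s$) as a finite sum of terms of the form
\[
f^{(m)}(|u|^2)\,\rd^{\gamma_1}_{*} u\cdots \rd^{\gamma_N}_{*} u,
\qquad N\ge 2,\quad \sum_j|\gamma_j|=|\al|,\quad|\gamma_j|<|\al|,
\]
where $\rd^{\gamma_j}_{*}$ denotes differentiation applied to either $u$ or $\bar u$. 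Because $|u|^2\in[0,1/4]$ keeps each $f^{(m)}(|u|^2)$ uniformly bounded, the $L^p$-norm of such a product can be estimated by the tame Moser/Kato--Ponce product inequality, combined with Gagliardo--Nirenberg interpolation between $\nor{u}{L^\infty}\le 1/2$ (from the hypothesis) and $\nor{u}{W^{s,p}}$. Summing over terms in the expansion yields a polynomial bound in $\nor{u}{W^{s,p}}$ of total degree at most $s$, producing the right-hand side $\nor{u}{W^{s,p}}+\nor{u}{W^{s,p}}^{\max\{s,1\}}$ as claimed.

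The main obstacle is the tame product/composition step at the borderline integrability $p=d/2$, where the Sobolev embedding $W^{s,p}\hookrightarrow L^\infty$ fails for small $s$. This is precisely where the pointwise hypothesis $|u|\le\frac12$ is essential: it supplies the $L^\infty$-control needed in the interpolation argument without relying on an embedding. With this in hand, the Moser-type estimate is standard and the proof is concluded by collecting the contributions of all terms produced by Fa\`a di Bruno.
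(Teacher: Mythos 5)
Your proposal is correct, and its skeleton (orthonormal-frame decomposition, $u_j=\bom\cdot\bJ_j$, Leibniz against the uniformly bounded derivatives of $\bJ_1,\bJ_2,\bh$) matches the paper's proof of \eqref{EB2} and of the $u_1\bJ_1+u_2\bJ_2$ part of \eqref{EB1}; the paper merely packages the frame bound as an induction on $s$ using $\rd_1\bJ_1=-\bh$, $\rd_1\bh=\bJ_1$. Where you genuinely diverge is the composition estimate for $z$. The paper expands $\rd^\gam z$ explicitly into terms $(1-|u|^2)^{-\frac12-k_0}u^{k_1}\ovl{u}^{k_2}\rd^{\gam_1}u\cdots\rd^{\gam_l}\ovl{u}$ and closes the estimate with H\"older plus the Sobolev embedding $W^{2,p}(\R^d)\subset L^q(\R^d)$ for $p\le q<\infty$ --- this embedding is exactly where the hypothesis $p\ge\frac d2$ enters. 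You instead invoke Fa\`a di Bruno and then a Moser/Gagliardo--Nirenberg interpolation $\nor{\rd^{\gam_j}u}{L^{pk/|\gam_j|}}\lesssim\nor{u}{L^\infty}^{1-|\gam_j|/k}\nor{\nab^k u}{L^p}^{|\gam_j|/k}$, using the pointwise bound $|u|\le\frac12$ as the $L^\infty$ input. This is valid and in fact buys you more: it needs no restriction $p\ge\frac d2$ at all, and it yields a bound linear in $\nor{u}{W^{s,p}}$ (the extra factors are absorbed into $\nor{u}{L^\infty}^{N-1}\le 1$), which is stronger than the stated polynomial right-hand side. Two small inaccuracies in your write-up, neither of which creates a gap: the constraint $|\gamma_j|<|\al|$ is not quite right (the $m=1$ term of Fa\`a di Bruno applied to $|u|^2=u\ovl u$ produces $(\rd^\al u)\,\ovl u$, where one factor carries the full order $|\al|$ and the other is undifferentiated --- but that term is trivially bounded by $\nor{u}{L^\infty}\nor{\rd^\al u}{L^p}$); and the final bound you obtain is really of degree one, not ``degree at most $s$'', though this only means you prove something slightly sharper than \eqref{EB1}.
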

\begin{proof}
We begin by proving \eqref{EB2}. It suffices to show
\begin{equation}\label{EB3}
\nor{\bm{F}\cdot \bJ_1}{W^{s,p}}
+\nor{\bm{F}\cdot \bJ_2}{W^{s,p}}
+\nor{\bm{F}\cdot \bh}{W^{s,p}}
\lesssim \nor{\bm{F}}{W^{s,p}}
\end{equation}
for $\bm{F}\in W^{s,p}(\R^d\colon \R^3)$. 
We proceed by induction on $s$. The case $s=0$ is immediate. If \eqref{EB3} is true for some $s\in\Z_{\ge 0}$, 
we have
\begin{align*}
\nor{\bm{F}\cdot \bJ_1}{W^{s+1,p}}
= \nor{\nab (\bm{F}\cdot \bJ_1)}{W^{s,p}}
&\le \sum_{j=1}^d \nor{(\rd_j \bm{F})\cdot \bJ_1}{W^{s,p}}
+\nor{ \bm{F}\cdot (\rd_1 \bJ_1)}{W^{s,p}}\\
&= \sum_{j=1}^d \nor{(\rd_j \bm{F})\cdot \bJ_1}{W^{s,p}}
+\nor{ \bm{F}\cdot \bh}{W^{s,p}}\\
&\lesssim \sum_{j=1}^d 
\nor{\rd_j\bm{F}}{W^{s,p}} + \nor{\bm{F}}{W^{s,p}}
\lesssim \nor{\bm{F}}{W^{s+1,p}}.
\end{align*}
$\nor{\bm{F}\cdot \bJ_2}{W^{s+1,p}}$, $\nor{\bm{F}\cdot \bh}{W^{s+1,p}}$ can be estimated in the similar way, leading to \eqref{EB3} for $s+1$. Hence \eqref{EB3} holds true for all $s\in\Z_{\ge 0}$.\par
Next, we prove \eqref{EB1}. 
By definition, we have
\[
\nor{\bom}{W^{s,p}} \le \nor{u_1\bJ_1}{W^{s,p}} + \nor{u_2\bJ_2}{W^{s,p}}
+\nor{z\bh}{W^{s,p}}.
\]
For $\gam\in (\Z_{\ge 0})^d$ with $|\gam|\le s$, the Leibniz rule yields
\begin{align*}
&\nor{\rd^\gam (u_1 \bJ_1)}{L^p} + 
\nor{\rd^\gam (u_2 \bJ_2)}{L^p} \\
&\lesssim \sum_{\gam_1+\gam_2=\gam} \nor{\rd^{\gam_1} u_1 \rd^{\gam_2}\bJ_1}{L^p}
+ \nor{\rd^{\gam} u_2 }{L^p}
\lesssim \nor{u}{W^{s,p}}.
\end{align*}
Now we claim 
\begin{equation}\label{EB4}
\nor{\rd^\gam z}{L^p}\lesssim 
\nor{u}{W^{s,p}} + \nor{u}{W^{s,p}}^{\max\{s,1\}}
\end{equation}
for $\gam\in (\Z_{\ge 0})^d$ with $|\gam|= s\ge 0$. 
First, the condition $|u|\le \frac 12$ and \eqref{E2.25} imply
\begin{align*}
\nor{z}{L^p} \lesssim \nor{u^2}{L^p} 
\lesssim \nor{u}{L^p},&&
\nor{\nab z}{L^p} 
\lesssim \nor{u \nab u}{L^p} 
\lesssim \nor{u}{L^p}
\end{align*}
and hence \eqref{EB4} holds for $s=0,1$. 
Here, note that
\begin{align}\label{e2:3}
\rd_j \rd_k z 
= 
\frac 12(1-|u|^2)^{-\frac 32} 
\Re (\ovl{u}\rd_j u ) 
- (1-|u|^2)^{-\frac 12} 
\Re (\ovl{u}\rd_j\rd_k u + \ovl{\rd_j u}\rd_k u)
\end{align}
for $j,k=1,...,d$. 
Since our assumption implies $\frac{1}{p}\le \frac 1{2p} + \frac{1}{d}$, the Sobolev embedding yields
\[
\nor{\nab^2 z}{L^p} \lesssim 
\nor{\nab^2 u}{L^p} +\nor{\nab u}{L^{2p}}^2 
\lesssim \nor{u}{W^{2,p}} + \nor{\nab u}{W^{1,p}}^2, 
\]
which gives \eqref{EB4} for $s=2$. 
Next we consider the case $s\ge 3$. 
By induction, one easily sees that $\rd^\gam z$ can be written as a linear combination of terms of the form
\begin{equation}\label{EB5}
(1-|u|^2)^{-\frac 12-k_0} u^{k_1} \ovl{u}^{k_2} \rd^{\gam_1} u \cdots\rd^{\gam_{l'}} u \rd^{\gam_{l'+1}} \ovl{u} \cdots \rd^{\gam_l} \ovl{u}
\end{equation}
with $0\le l'\le l \le s$, $k_0,k_1,k_2\in \Z_{\ge 0}$, $\gam_j\in (\Z_{\ge0})^d \setminus \{0\}$ satisfying $\gam_1+\cdots +\gam_l = \gam$. 
If $p=\infty$, then 
\begin{align*}
&\nor{(1-|u|^2)^{-\frac 12-k_0} u^{k_1} \ovl{u}^{k_2} \rd^{\gam_1} u \cdots\rd^{\gam_{l'}} u \rd^{\gam_{l'+1}} \ovl{u} \cdots \rd^{\gam_l} \ovl{u}}{L^\infty}\\
&\lesssim \nor{\rd^{\gam_1} u}{L^{\infty}} \cdots \nor{\rd^{\gam_l} u}{L^{\infty}} \lesssim \nor{u}{W^{s,\infty}}^l 
\lesssim \nor{u}{W^{s,\infty}} + 
\nor{u}{W^{s,\infty}}^s 
,
\end{align*}
which implies \eqref{EB4}. 
Now suppose that $p<\infty$. 
For simplicity, we assume $|\gam_1|\ge ... \ge |\gam_l|$. 
If $\gam_1=\gam$, then $l=1$ and hence
\begin{align*}
&\nor{(1-|u|^2)^{-\frac 12-k_0} u^{k_1} \ovl{u}^{k_2} \rd^{\gam_1} u \cdots\rd^{\gam_{l'}} u \rd^{\gam_{l'+1}} \ovl{u} \cdots \rd^{\gam_l} \ovl{u}}{L^p}\lesssim \nor{\rd^{\gam_1} u}{L^{p}}  \lesssim \nor{u}{W^{s,p}}.
\end{align*}
If $|\gam_1|\le s-1$, then the assumption $s\ge 3$ implies $|\gam_2|\le s-2$. 
Noting that $W^{2,p}(\R^d)\subset L^q(\R^d)$ holds for any $p \le q<\infty$, we have
\begin{align*}
&\nor{(1-|u|^2)^{-\frac 12-k_0} u^{k_1} \ovl{u}^{k_2} \rd^{\gam_1} u \cdots\rd^{\gam_{l'}} u \rd^{\gam_{l'+1}} \ovl{u} \cdots \rd^{\gam_l} \ovl{u}}{L^p}\\
&\lesssim \nor{\rd^{\gam_1} u}{L^{2p}} 
\nor{\rd^{\gam_2} u}{L^{2p(l-1)}}\cdot\cdots\cdot \nor{\rd^{\gam_l} u}{L^{2p(l-1)}} \\
& \lesssim \nor{\rd^{\gam_1} u}{W^{1,p}} 
\nor{\rd^{\gam_2} u}{W^{2,p}}\cdot\cdots\cdot \nor{\rd^{\gam_l} u}{W^{2,p}}\lesssim \nor{u}{W^{s,p}}^l 
\lesssim \nor{u}{W^{s,p}} +  \nor{u}{W^{s,p}}^s,
\end{align*}
which shows \eqref{EB4}. 
Consequently, the Leibniz rule yields
\[
\nor{\rd^\gam (z\bh)}{L^p} 
\lesssim \sum_{\gam_1+\gam_2=\gam} \nor{\rd^{\gam_1} z \rd^{\gam_2} \bh}{L^p}
\lesssim \sum_{0\le\gam_1\le\gam}\nor{\rd^{\gam_1}z}{L^p}
\lesssim \nor{u}{W^{s,p}} + \nor{u}{W^{s,p}}^{\max\{s,1\}},
\]
concluding \eqref{EB1}.
\end{proof}
\textbf{Acknowledgement}
The author is supported by JSPS KAKENHI Grant Number JP23KJ1416. 
\bibliography{helix}
\end{document}